\newcommand{\cA}{\mathcal A}
\newcommand{\cG}{\mathcal G}
\newcommand{\bx}{\mathbf x}
\newcommand{\by}{\mathbf y}
\newcommand{\be}{\mathbf e}
\begin{document}
\newtheorem{theorem}{Theorem}[section]
\newtheorem{proposition}[theorem]{Proposition}
\newtheorem{lemma}[theorem]{Lemma}
\newtheorem{corollary}{Corollary}[theorem]
\newtheorem{remark}[theorem]{Remark}
\newtheorem{example}[theorem]{Example}
\newtheorem{observation}[theorem]{Observation}
\newtheorem{definition}[theorem]{Definition}

\numberwithin{equation}{section}

\title{Fiedler vectors with unbalanced sign patterns}

\author{\|Sooyeong |Kim|, Winnipeg,
        \|Steve |Kirkland|, Winnipeg}

\rec {May 14, 2020}

\dedicatory{Cordially dedicated to ...}

\abstract 
In spectral bisection, a Fielder vector is used for partitioning a graph into two connected subgraphs according to its sign pattern. In this article, we investigate graphs having Fiedler vectors with unbalanced sign patterns such that a partition can result in two connected subgraphs that are distinctly different in size. We present a characterization of graphs having a Fiedler vector with exactly one negative component, and discuss some classes of such graphs. We also establish an analogous result for regular graphs with a Fiedler vector with exactly two negative components. In particular, we examine the circumstances under which any Fiedler vector has unbalanced sign pattern according to the number of vertices with minimum degree.
\endabstract
 
\keywords
   Algebraic connectivity, Fiedler vector, minimum degree
\endkeywords

\subjclass
05C50, 15A18
\endsubjclass

\thanks
   This research has been supported by a University of Manitoba Graduate Fellowship (Sooyeong Kim), and by a Discovery Grant from the Natural Sciences and Engineering Research Council of Canada under grant number RGPIN--2019--05408 (Steve Kirkland). The authors would like to thank an anonymous reviewer, whose constructive comments resulted in improvements to the paper.
\endthanks

\section{Introduction and preliminaries}\label{sec1}
When does spectral bisection work well? Recall that spectral bisection is a method to approximately solve the graph partitioning problem: partition a graph $G$ into $k$ subgraphs each of which is similar in size while minimizing the number of edges between each pair of components. There is the result in \cite{Urchel:maximal} about the maximal error in spectral bisection with respect to the minimal cut while partition sizes are the same. In contrast, we shall investigate if spectral bisection is a robust technique by considering the partition sizes. The method uses a so--called Fiedler vector \cite{Fiedler:symmetric} of a graph $G$ so that the edges between two vertices valuated by different signs of the Fiedler vector are cut in order to have the graph $G$ partitioned into two connected subgraphs. The paper \cite{Urschel:bisection} of Urschel and Zikatanov provides a generalization of the work \cite{Fiedler:symmetric} of Miroslav Fiedler with respect to spectral bisection. Specifically, \cite{Urschel:bisection} proves the existence of a Fiedler vector such that two induced subgraphs on the two sets of vertices valuated by non--negative signs and positive signs, respectively, are connected. If all Fielder vectors of a graph $G$ have a sign pattern such that a few vertices are valuated by one sign and possibly $0$, and the others are valuated by the other sign, then spectral bisection will provide an inadequate partition regarding the graph partitioning problem. The present paper examines such graphs and their properties.

Let $G$ be a simple graph of order $n$, that is, $|V(G)|=n$ where $V(G)$ is the vertex set of $G$, and let $H$ be a subgraph of $G$. For $v\in V(H)$, we define $\mathrm{deg}_H(v)$ as the degree of $v$ in $H$. We denote the \textit{minimum degree} and the \textit{vertex connectivity} of $G$ by $\delta(G)$ and $v(G)$, respectively. The \textit{Laplacian matrix} of $G$ is $L(G)=D(G)-A(G)$ where $A(G)$ is the adjacency matrix and $D(G)$ is the diagonal matrix of vertex degrees. The \textit{spectrum} of $L(G)$, $S(L(G))=(\lambda_1(G),\dots,\lambda_n(G))$, is defined as the sequence of eigenvalues of $L(G)$ in non--increasing order. It is well known that $L(G)$ is symmetric and positive semi--definite. In particular, $L(G)\mathbf{1}_n=\mathbf{0}_n$ where $\mathbf{1}_n$ and $\mathbf{0}_n$ are the all ones vector and the zero vector of size $n$, respectively (the subscript will be omitted if no confusion arises). So, $\lambda_n(G)=0$. Similarly, the \textit{spectrum} of $A(G)$, $S(A(G))=(\mu_1(G),\dots,\mu_n(G))$, is defined as the sequence of eigenvalues of $A(G)$ in non--increasing order. Moreover, $\lambda_i(G)$ and $\mu_i(G)$ are written as $\lambda_i$ and $\mu_i$ if $G$ is clear from the context. We use $am(\lambda)$ to denote the algebraic multiplicity of an eigenvalue $\lambda$ of $L(G)$ or $A(G)$. The \textit{algebraic connectivity} $\alpha(G)$ of a graph $G$ is defined as $\lambda_{n-1}(G)$. It is proven in \cite{Fiedler:algebraic} that $\alpha(G)\leq v(G)$ for a non--complete graph $G$. We refer the reader to \cite{Fiedler:algebraic} for more properties of $\alpha(G)$. Since $v(G)\leq \delta(G)$, we have $\alpha(G)\leq\delta(G)$ for a non--complete graph $G$. An eigenvector associated with $\alpha(G)$ is called a \textit{Fiedler vector}. Let $V(G)=\{v_1,\dots,v_n\}$ and $\bx=[x_i]$ be a Fiedler vector of $G$. For $1\leq i\leq n$, a vertex $v_i$ is said to be \textit{valuated} by $x_i$ if $x_i$ is assigned to $v_i$. 

Suppose that $\bx=[x_j]$ is an eigenvector associated to an eigenvalue $\lambda$ of $L(G)$ or $A(G)$. We define $i_\lambda(\mathbf{x})=\mathrm{min}\{|\{x_j|x_j>0\}|,|\{x_j|x_j<0\}|\}$. To distinguish between $L(G)$ and $A(G)$, we define 
\begin{align*}
i_{\lambda}(G):=\underset{\bx\neq0}{\mathrm{min}}\{i_{\lambda}(\mathbf{x})|L(G)\mathbf{x}={\lambda}\mathbf{x}\}\;\;\text{and}\;\;i_{\mu}^*(G):=\underset{\bx\neq0}{\mathrm{min}}\{i_{\mu}(\mathbf{x})|A(G)\mathbf{x}={\mu}\mathbf{x}\}.
\end{align*}
In particular,  $i_{\alpha(G)}(\mathbf{x})$ and $i_{\alpha(G)}(G)$  are denoted as $i(\mathbf{x})$ and $i(G)$, respectively.

We also use some standard terminology and notation in this paper. A vertex $v$ in a connected graph $G$ is a \textit{cut--vertex} if the removal of $v$ and all incident edges results in a disconnected graph. A vertex $v$ in a graph is a \textit{dominating vertex} if $v$ is adjacent to all other vertices. A graph is $r$--\textit{regular} if each vertex of the graph has the same degree $r$. The \textit{complete graph} $K_n$ is the $(n-1)$--regular graph on $n$ vertices. The \textit{empty graph} on $k$ vertices, denoted as $N_k$, consists of $k$ vertices with no edges. The \textit{line graph} of a graph $G$ is the graph whose vertices are the edges of $G$, where two vertices are adjacent if and only if their corresponding edges are incident in $G$. The \textit{complement} $\bar{G}$ of a graph $G$ is a graph with the vertex set $V(G)$ where two vertices are adjacent in $\bar{G}$ if and only if the two vertices are not adjacent in $G$. For two graphs $G_1$ and $G_2$ on disjoint vertex sets, the \textit{disjoint union} $G_1+G_2$ of $G_1$ and $G_2$ is defined as the graph $(V(G_1)\cup V(G_2),E(G_1)\cup E(G_2)))$. For a vertex $v\in V(G)$, $G-v$ is the subgraph of $G$ obtained from $G$ by deleting $v$ and all edges incident with it. The \textit{join} of $G_1$ and $G_2$, denoted as $G_1\vee G_2$, is the graph obtained from $G_1+G_2$ by joining every vertex in $V(G_1)$ to every vertex in $V(G_2)$. Furthermore, $\vee_{i=1}^k G$ is defined as $\underbrace{G\vee\cdots\vee G}_{k\text{ times}}$. It it straightforward to see that $G_1\vee (G_2\vee G_3)=(G_1\vee G_2)\vee G_3$ and $G_1\vee G_2=G_2\vee G_1$.

We introduce the spectral properties of a join of graphs since we use them in several places. Consider two graphs $G_1$ and $G_2$ on disjoint sets of $p$ and $q$ vertices, respectively. Let $S(L(G_1))=(\lambda_1(G_1),\dots,\lambda_p(G_1))$ and $S(L(G_2))=(\lambda_1(G_2),\dots,\lambda_q(G_2))$. It is known (see \cite{Merris:join and spect}) that the (multi--)set of all eigenvalues of $L(G_1\vee G_2)$ is
\begin{align*}
\{0,\lambda_1(G_1)+q,\dots,\lambda_{p-1}(G_1)+q,\lambda_1(G_2)+p,\dots,\lambda_{q-1}(G_2)+p,p+q\}.
\end{align*}
To see this, label the indices of rows and columns of $L(G_1\vee G_2)$ in order of $V(G_1)$ followed by $V(G_2)$. If $\bx$ is an eigenvector orthogonal to $\mathbf{1}_p$ corresponding to $\lambda_i(G_1)$ for $1\leq i\leq p-1$, then $\begin{bmatrix}
\bx^T & \mathbf{0}^T
\end{bmatrix}$ is an eigenvector of $L(G_1\vee G_2)$. Similarly, for an eigenvector $\by$ orthogonal to $\mathbf{1}_q$ corresponding to $\lambda_i(G_2)$ for $1\leq i\leq q-1$, we have $\begin{bmatrix}
\mathbf{0}^T & \by^T 
\end{bmatrix}$ as an eigenvector of $L(G_1\vee G_2)$. Furthermore, $\mathbf{1}_{p+q}$ and $\begin{bmatrix}
-q\mathbf{1}^T & p\mathbf{1}^T
\end{bmatrix}$ are eigenvectors associated with $0$ and $p+q$, respectively.

In Section 2 we find equivalent conditions for $G$ to have $i(G)=1$ (Theorem \ref{equiv:i(G)=1}). In Section 3, all graphs $G$ with $i(\bx)=1$ for all Fiedler vectors $\bx$ are characterized by studying minimum values of $am(\alpha(G))$, according to the number of vertices with minimum degree (Theorem \ref{greatestlowerbounds}). Furthermore, we characterize the graphs for which the sign patterns of all Fielder vectors are extremely unbalanced (Theorem \ref{extreme:i(G)=am(a(G))=1}). In Section 4, threshold graphs with $i(G)=1$ and graphs with three distinct Laplacian eigenvalues and $i(G)=1$ are described. Section 5 provides a characterization of all regular graphs $G$ with $i(G)=2$ by investigating sign patterns of eigenvectors corresponding to the least adjacency eigenvalue of the complement of $G$ (Theorem \ref{Theorem:i(G)=2 for regular}).

Throughout this paper, we assume that all graphs are simple and bold--faced letters are used for vectors.

\section{Characterization of graphs with $i(G)=1$}


\begin{proposition}\label{sign:v=0}
	Let $G$ be a graph of order $n\geq 2$. $G$ is disconnected if and only if $i(G)=0$.
\end{proposition}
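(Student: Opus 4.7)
The plan is to prove both implications directly from the definitions, using the well-known facts that $L(G)\mathbf{1}=\mathbf{0}$, that $L(G)$ is positive semi-definite, and that the multiplicity of $0$ as a Laplacian eigenvalue equals the number of connected components of $G$.

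For the forward direction, suppose $G$ is disconnected with components $C_1,\dots,C_k$, where $k\geq 2$. Then $L(G)$ is a block diagonal matrix (after relabeling), and $\mathbf{1}$ restricted to each block lies in the kernel, so $am(0)=k\geq 2$. Since $\alpha(G)=\lambda_{n-1}(G)$ and the smallest eigenvalue $\lambda_n(G)=0$, we get $\alpha(G)=0$. Now take $\mathbf{x}$ to be the indicator vector of $C_1$; this is a nonzero Fiedler vector with no negative entries, so $i_{\alpha(G)}(\mathbf{x})=0$, giving $i(G)=0$.

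For the reverse direction, suppose $i(G)=0$. Then there is a nonzero Fiedler vector $\mathbf{x}$ with $i_{\alpha(G)}(\mathbf{x})=0$, meaning either $\mathbf{x}\geq \mathbf{0}$ or $\mathbf{x}\leq\mathbf{0}$. Replacing $\mathbf{x}$ by $-\mathbf{x}$ if needed, assume $\mathbf{x}\geq \mathbf{0}$ with $\mathbf{x}\neq \mathbf{0}$, so $\mathbf{1}^T\mathbf{x}>0$. Suppose for contradiction that $G$ is connected. Then $am(0)=1$, so $\alpha(G)>0$. Since $\mathbf{x}$ is a Laplacian eigenvector for the eigenvalue $\alpha(G)\neq 0$ of the symmetric matrix $L(G)$, it is orthogonal to the eigenvector $\mathbf{1}$ for the eigenvalue $0$. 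This forces $\mathbf{1}^T\mathbf{x}=0$, contradicting $\mathbf{1}^T\mathbf{x}>0$. Hence $G$ must be disconnected.

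I do not anticipate a real obstacle here: the statement is essentially a bookkeeping exercise in the definitions of $\alpha(G)$, of a Fiedler vector, and of $i(G)$, combined with the orthogonality of eigenvectors belonging to distinct eigenvalues of a symmetric matrix. The only subtlety worth spelling out is the appeal to $\alpha(G)>0$ for connected $G$, which ensures that the Fiedler vector is automatically orthogonal to $\mathbf{1}$, so no separate argument is needed for the sign-semidefinite case.
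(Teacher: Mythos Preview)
Your proof is correct and follows essentially the same approach as the paper. The only cosmetic differences are that the paper uses $\mathbf{1}$ itself as the Fiedler vector in the forward direction (valid since $\alpha(G)=0$ has multiplicity $\geq 2$), and in the reverse direction multiplies $L(G)\mathbf{x}=\alpha(G)\mathbf{x}$ on the left by $\mathbf{1}^T$ to obtain $0=\alpha(G)\mathbf{1}^T\mathbf{x}$ directly rather than phrasing it via orthogonality of eigenspaces; the underlying idea is identical.
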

\begin{proof}
	Suppose that $G$ is disconnected. Then, $\alpha(G)=0$. So, the all ones vector is a Fiedler vector of $G$. Hence, $i(G)=0$. Conversely, assume that $i(G)=0$. Then there exists a non--negative Fiedler vector $\mathbf{x}$. Since $L(G)\bx=\alpha(G)\bx$, $\mathbf{1}^TL(G)\bx=\alpha(G)\mathbf{1}^T\bx$ and it follows that $\alpha(G)=0$. Hence, $G$ is disconnected.
\end{proof}

For a graph $G$ of order $1$, we have $i(G)=0$, but $G$ is connected. So, if $G$ is a graphs on $n$ vertices where $n\geq 2$, then $i(G)>0$ implies that $G$ is connected.

\begin{lemma}\label{sign:v=1}
	Let $G$ be a non--complete graph of order $n\geq 3$. If $i(G)=1$, then $\alpha(G)=\delta(G)$.
\end{lemma}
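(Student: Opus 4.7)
The plan is to exploit the hypothesis $i(G)=1$ by extracting a Fiedler vector with exactly one strictly negative component, and then to read off the desired inequality from the single eigenvalue equation at that component.

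More precisely, first I would invoke the definition of $i(G)$: since $i(G)=1$, there is a Fiedler vector $\mathbf{x}=[x_j]$ with $i(\mathbf{x})=1$, and after possibly replacing $\mathbf{x}$ by $-\mathbf{x}$ (still a Fiedler vector) I may assume that exactly one entry is negative. Relabel so that $x_1<0$ and $x_j\ge 0$ for all $j\ge 2$. Let $v_1$ be the vertex valuated by $x_1$ and set $d=\mathrm{deg}_G(v_1)$.

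Next I would look at the first coordinate of the eigenvector equation $L(G)\mathbf{x}=\alpha(G)\mathbf{x}$. Using $L(G)=D(G)-A(G)$, this coordinate reads
\begin{equation*}
d\,x_1 \;-\; \sum_{v_j \sim v_1} x_j \;=\; \alpha(G)\,x_1,
\end{equation*}
equivalently $(d-\alpha(G))\,x_1 = \sum_{v_j\sim v_1} x_j$. The right-hand side is a sum of entries $x_j$ with $j\ge 2$, hence is nonnegative, while $x_1<0$. Therefore $d-\alpha(G)\le 0$, i.e.\ $\alpha(G)\ge d \ge \delta(G)$. Combined with the standing inequality $\alpha(G)\le\delta(G)$ valid for the non-complete graph $G$ (recalled in the preliminaries), this yields $\alpha(G)=\delta(G)$.

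There is no real obstacle here; the only points to be careful about are (i) confirming that the sign convention allows us to assume the single minority sign is negative, which is immediate since Fiedler vectors form a subspace closed under negation, and (ii) making sure the strict inequality $x_1<0$ is used to flip the inequality when dividing by $x_1$. The order $n\ge 3$ hypothesis is only needed in order to have a non-complete graph on which $\alpha(G)\le\delta(G)$ is meaningful and to rule out the $n=1$ degeneracy in the definition of $i(G)$; it plays no further role in the argument.
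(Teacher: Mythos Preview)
Your proposal is correct and follows essentially the same approach as the paper: both pick a Fiedler vector with a single negative entry $x_1$, read off the first coordinate of $L(G)\mathbf{x}=\alpha(G)\mathbf{x}$, and use the sign information to deduce $\alpha(G)\ge\mathrm{deg}_G(v_1)\ge\delta(G)$, then finish with the standard bound $\alpha(G)\le\delta(G)$ for non-complete graphs. The only difference is cosmetic---the paper writes the equation in terms of the Laplacian entries $\ell_{1k}$ rather than the adjacency relation, but the argument is identical.
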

\begin{proof}
	Let $\bx$ be a Fiedler vector with $i(\bx)=1$, and we may suppose that $x_1<0$. We have $(L(G)-\alpha(G)I)\bx=0$, and considering the first entry, we find that $(\ell_{11}-\alpha(G))x_1+\sum_{k\neq 1}\ell_{1k}x_k=0$. Since $x_1<0$, $\ell_{1k}\leq 0$ and $x_k\geq 0$ for all $k\neq 1$, it must be the case that $ \ell_{11}\leq\alpha(G)$. Hence $\alpha(G)\geq\delta(G)$, and since $G$ is non--complete, $\alpha(G)\leq\delta(G)$. We deduce that $\alpha(G)=\delta(G)$.
\end{proof}

\begin{example}
	Consider the complete graph $K_n$. Then, $(1,-1,0,\dots,0)^T$ is an eigenvector of $\alpha(K_n)=n$ and by Proposition \ref{sign:v=0}, $i(K_n)=1$. Moreover, $\alpha(G)>\delta(G)=n-1$.
\end{example}

Now, we shall characterize non--complete connected graphs $G$ with $\alpha(G)=\delta(G)$. A characterization of graphs for which $\alpha(G)=v(G)$ appears in \cite{Steve:connectivity}: for a non--complete, connected graph $G$ on $n$ vertices, $\alpha(G)=v(G)$ if and only if there exists a disconnected graph $G_1$ on $n-v(G)$ vertices and a graph $G_2$ on $v(G)$ vertices with $\alpha(G_2)\geq 2v(G)-n$ such that $G=G_1\vee G_2$. Since $\alpha(G)\leq v(G)\leq \delta(G)$, if $\alpha(G)=\delta(G)$, then $\alpha(G)=v(G)=\delta(G)$. So, we begin with a join of a disconnected graph $G_1$ on $n-\delta(G)$ vertices and a graph $G_2$ on $\delta(G)$ vertices with $\alpha(G_2)\geq 2\delta(G)-n$.

\begin{lemma}\label{i=1}
	Let $G$ be a non--complete, connected graph of order $n\geq 3$. Then, $\alpha(G)=\delta(G)$ if and only if $G$ can be expressed as a join of $G_1$ and $G_2$ where the graph $G_1$ on $n-\delta(G)$ vertices has an isolated vertex, and $G_2$ is a graph on $\delta(G)$ vertices, and $\alpha(G_2)\geq 2\delta(G)-n$.
\end{lemma}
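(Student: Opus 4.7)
The plan is to treat the two directions separately: the ``if'' direction is a direct computation with the join Laplacian spectrum recalled in Section~1, while the ``only if'' direction is driven by a single well-chosen Rayleigh-quotient test vector.

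For the ``if'' direction, I would apply the join spectral formula to $G=G_1\vee G_2$ with $|V(G_1)|=n-\delta(G)$ and $|V(G_2)|=\delta(G)$. Because $G_1$ contains an isolated vertex it is disconnected, so $\alpha(G_1)=0$, and consequently $\delta(G)$ appears among the Laplacian eigenvalues of $G$. The remaining non-zero eigenvalues are $\lambda_i(G_1)+\delta(G)\geq\delta(G)$, $\lambda_j(G_2)+(n-\delta(G))\geq\alpha(G_2)+(n-\delta(G))\geq\delta(G)$ (by hypothesis), and $n$, so $\alpha(G)=\delta(G)$.

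For the ``only if'' direction, let $v$ be a vertex with $\mathrm{deg}_G(v)=\delta(G)$, let $N$ be the neighborhood of $v$, let $M=V(G)\setminus(N\cup\{v\})$, and set $m=|M|=n-\delta(G)-1$; note $m\geq 1$ because $G$ is non-complete. The key step is to test the Rayleigh quotient on the vector $\bx$ defined by $x_v=m$, $x_u=-1$ for $u\in M$, and $x_w=0$ for $w\in N$. One checks $\bx\perp\mathbf{1}$, and an edge-by-edge evaluation of $\bx^TL(G)\bx=\sum_{\{i,j\}\in E(G)}(x_i-x_j)^2$ gives
\[
\bx^TL(G)\bx=\delta(G)\,m^2+e(M,N),\qquad \bx^T\bx=m(m+1),
\]
where $e(M,N)$ counts edges between $M$ and $N$. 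Combining the Rayleigh inequality $\bx^TL(G)\bx/\bx^T\bx\geq\alpha(G)=\delta(G)$ with the trivial upper bound $e(M,N)\leq |M|\cdot|N|=\delta(G)\,m$ forces $e(M,N)=\delta(G)\,m$, i.e.\ every vertex of $M$ is adjacent to every vertex of $N$. This yields the decomposition $G=G[M\cup\{v\}]\vee G[N]$ with $v$ isolated in $G[M\cup\{v\}]$ by the definition of $M$, and the inequality $\alpha(G[N])\geq 2\delta(G)-n$ then drops out of one final use of the join spectral formula.

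The step I expect to be most delicate is selecting the test vector. My first impulse would have been to invoke the earlier characterization of graphs with $\alpha(G)=v(G)$ and argue that the disconnected factor can be rearranged to contain an isolated vertex, but extracting that structural property from an arbitrary join decomposition seems to require induction or a case split on where a minimum-degree vertex sits in the decomposition; the balanced choice $(m,-1,0)$ above sidesteps this by turning the spectral hypothesis directly into a tight two-sided bound on $e(M,N)$.
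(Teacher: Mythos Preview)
Your proof is correct, and the ``only if'' direction is genuinely different from the paper's argument. The paper proceeds exactly along the lines you describe in your final paragraph: it invokes the Kirkland--Molitierno--Neumann--Shader characterization of graphs with $\alpha(G)=v(G)$ to write $G=G_1\vee G_2$ with $G_1$ disconnected, then splits into cases according to whether $G_1$ already has an isolated vertex, and if not, locates a minimum-degree vertex in $G_2$ and recurses on $G_2$ via induction on $n$. Your Rayleigh-quotient test vector $(m,-1,\dots,-1,0,\dots,0)$ bypasses all of this: it pins down the join structure directly around a single minimum-degree vertex, with no induction and no appeal to the external $\alpha(G)=v(G)$ result. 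The trade-off is that the paper's route makes explicit how the present lemma refines the earlier $\alpha(G)=v(G)$ characterization, whereas your argument is self-contained and shorter but obscures that connection. Either way, the ``if'' direction is handled identically in both.
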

\begin{proof}
	 Suppose that $\alpha(G)=\delta(G)$. We will establish the desired conclusion by induction. For order $3$, there is only one graph, $N_1\vee N_2$, that is non--complete and connected; it has the same algebraic connectivity as the minimum degree and has the desired structure. Let $n\geq 4$. Suppose that a graph $G$ of order $n$ with $\alpha(G)=\delta(G)$ is non--complete and connected. Since $\alpha(G)=v(G)=\delta(G)$, $G$ is expressed as $G_1\vee G_2$ where $G_1$ is a disconnected graph of order $n-\delta(G)$, and $G_2$ is a graph of order $\delta(G)$ with $\alpha(G_2)\geq 2\delta(G)-n$. We have $\mathrm{deg}_G(v)\geq \delta(G)$ for $v\in V(G_1)$ and $\mathrm{deg}_G(w)\geq n-\delta(G)$ for $w\in V(G_2)$. If $G_1$ has an isolated vertex, we are done. Suppose that $G_1$ has no isolated vertex. Since $\delta(G_1)>0$, we have $\mathrm{deg}_G(v)>\delta(G)$ for all $v\in V(G_1)$. So, there exists a vertex $w\in V(G_2)$ such that $$\mathrm{deg}_G(w)=\mathrm{deg}_{G_2}(w)+(n-\delta(G))=\delta(G),\;\text{and}\;\mathrm{deg}_{G_2}(w)=\delta(G_2).$$ Since $\mathrm{deg}_{G_2}(w)\geq 0$, we obtain $n-\delta(G)\leq \delta(G)$. 
	
	Suppose that  $n-\delta(G)=\delta(G)$. Then, $\mathrm{deg}_{G_2}(w)=0$ so that $G_2$ has an isolated vertex. Since $G_1$ is disconnected, $\alpha(G_1)=0$. Moreover, $\delta(G)=\frac{n}{2}$. By exchanging the roles of $G_1$ and $G_2$, we obtain the desired description of $G$.
	
	Assume that $n-\delta(G)<\delta(G)$. Note that $\delta(G_2)=2\delta(G)-n$. Since $\alpha(G_2)\geq 2\delta(G)-n$, we obtain $\alpha(G_2)\geq\delta(G_2)$. Suppose that $\delta(G_2)=\delta(G)-1$. Then, we have $\delta(G)=n-1$, which contradicts the non--completeness of $G$. Therefore, $G_2$ is a non--complete, connected graph of order $\delta(G)$ with $\alpha(G_2)=\delta(G_2)$. By induction, there exists a graph $H_1$ of order $\delta(G)-\delta(G_2)$ with an isolated vertex and a graph $H_2$ of order $\delta(G_2)$ such that $G_2=H_1\vee H_2$ and $\alpha(H_2)\geq 2\delta(G_2)-\delta(G)$. Hence, $G=G_1\vee H_1\vee H_2$. Consider $G_1\vee H_2$ of order $n-\delta(G)+\delta(G_2)$. Since $\delta(G_2)=2\delta(G)-n$, the order of $G_1\vee H_2$ is $\delta(G)$. Furthermore, $G_1$ is disconnected so that $\alpha(G_1\vee H_2)$ is either $\delta(G_2)$ or $\alpha(H_2)+n-\delta(G)$. Considering $\alpha(H_2)\geq 2\delta(G_2)-\delta(G)$, it follows that $\alpha(H_2)+n-\delta(G)\geq \delta(G_2)$. So, $\alpha(G_1\vee H_2)=\delta(G_2)=2\delta(G)-n$. Therefore, $G$ can be expressed as a join of $H_1$ and $G_1\vee H_2$.
	
	Conversely, suppose that $G_1$ is a graph of order $n-k$ with an isolated vertex where $1\leq k \leq n-2$, and $G_2$ is a graph of order $k$ with $\alpha(G_2)\geq 2k-n$. Since $\alpha(G_2)+n-k\geq k$, we have $\alpha(G_1\vee G_2)=k$. Let $v$ be an isolated vertex in $G_1$. Then, $\mathrm{deg}_G(v)=k$. So, $\delta(G)\leq k=\alpha(G)$ implies $\delta(G)=\alpha(G)$.
\end{proof}

\begin{remark}\label{order:G_1,G_2}
	{\rm{If $G$ is a non--complete connected graph on $n$ vertices, we have $\delta(G)<n-1$. So, $G_1$ in Lemma \ref{i=1} is of order at least $2$. However, $G_2$ can consist of a single vertex $v$. Then, the vertex $v$ is a cut--vertex of $G$, and also a dominating vertex in $G$.
	
	Considering the fact that $|V(G_1)|\geq 2$ and $G=G_1\vee G_2$, there is no cut--vertex of $G$ in $G_1$. Moreover, if $G_2$ contains a cut--vertex of $G$, $|V(G_2)|=1$. Therefore, if $i(G)=1$, then $G$ has at most one cut--vertex.}}
\end{remark}

\begin{lemma}\label{i=1:a=delta}
	Let $G$ be a non--complete, connected graph of order $n$. Suppose that $G$ can be expressed as a join of $G_1$ and $G_2$ where the graph $G_1$ on $n-\delta(G)$ vertices has an isolated vertex $v$, $G_2$ is a graph on $\delta(G)$ vertices, and $\alpha(G_2)\geq 2\delta(G)-n$. Then, $i(G)=1$.
\end{lemma}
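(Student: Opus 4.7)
The plan is to construct explicitly a Fiedler vector $\mathbf{x}$ of $G$ with $i(\mathbf{x})=1$. Since $G=G_1\vee G_2$ is connected and $n\geq 3$, Proposition \ref{sign:v=0} gives $i(G)\geq 1$, so producing one such vector will yield $i(G)=1$.

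First, set $p=n-\delta(G)$ and $q=\delta(G)$. The converse direction argued inside the proof of Lemma \ref{i=1} (or equivalently the spectral description of $L(G_1\vee G_2)$ recalled in the introduction, together with $\alpha(G_2)+p\geq q$) already tells us that $\alpha(G)=q=\delta(G)$. So a Fiedler vector is precisely a nonzero eigenvector of $L(G)$ at eigenvalue $q$. From the join-spectrum description, one source of such eigenvectors is this: if $\mathbf{y}\in\mathbb{R}^p$ is a $0$-eigenvector of $L(G_1)$ orthogonal to $\mathbf{1}_p$, then $\bigl[\mathbf{y}^T\ \mathbf{0}^T\bigr]^T$ is a $(0+q)$-eigenvector of $L(G)$, hence a Fiedler vector.

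The central step is to exhibit such a $\mathbf{y}$ with exactly one negative entry. Since $v$ is isolated in $G_1$, both $\{v\}$ and $V(G_1)\setminus\{v\}$ are unions of connected components of $G_1$, so $\mathbf{e}_v$ and $\mathbf{1}_{V(G_1)\setminus\{v\}}$ both lie in $\ker L(G_1)$. I therefore take
\[
y_v=-(p-1),\qquad y_u=1\text{ for each }u\in V(G_1)\setminus\{v\}.
\]
This $\mathbf{y}$ is the linear combination $-(p-1)\mathbf{e}_v+\mathbf{1}_{V(G_1)\setminus\{v\}}$ of null vectors, and satisfies $\mathbf{y}^T\mathbf{1}_p=-(p-1)+(p-1)=0$. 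Extending by zero on $V(G_2)$ gives the desired Fiedler vector $\mathbf{x}$: it has one negative coordinate (at $v$) and $p-1$ positive coordinates (at the other vertices of $G_1$). Since $G$ is non-complete we have $\delta(G)\leq n-2$, so $p-1\geq 1$, and therefore $i(\mathbf{x})=1$.

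There is no genuine obstacle here; the only routine verifications are that $\mathbf{y}\in\ker L(G_1)$, that $\mathbf{y}\perp \mathbf{1}_p$, and that $p\geq 2$. As a sanity check one can bypass the join-spectrum result and verify $L(G)\mathbf{x}=q\mathbf{x}$ entrywise: at $v$ the sum over neighbors collapses because all of them sit in $V(G_2)$ where $\mathbf{x}$ is zero; at a vertex $u\in V(G_1)\setminus\{v\}$ the $V(G_2)$-contribution is zero and the $G_1$-contribution cancels because $v\not\sim u$ in $G_1$; at a vertex of $V(G_2)$ the contribution from $V(G_1)$ vanishes because the coefficients on $V(G_1)$ sum to zero. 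This confirms $\mathbf{x}$ is a Fiedler vector and completes the argument.
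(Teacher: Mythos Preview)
Your proof is correct and follows essentially the same approach as the paper: both construct the explicit Fiedler vector that is $-(|V(G_1)|-1)$ at the isolated vertex $v$, $1$ on the remaining vertices of $G_1$, and $0$ on $V(G_2)$. You supply more justification (that $\alpha(G)=\delta(G)$, that $\mathbf{y}\in\ker L(G_1)$, that $p\geq 2$) where the paper simply asserts the eigenvector exists, but the argument is the same.
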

\begin{proof}
	There exists an eigenvector $\mathbf{x}$ corresponding to $\alpha(G)$ where entries corresponding to vertices in $G_1$ except $v$ are all ones, the entry for $v$ is $-(|V(G_1)|-1)$ and zeros elsewhere. Therefore, $i(G)=1$.
\end{proof}

\begin{corollary}
	Let $G$ be a non--complete, connected graph. There exists a cut--vertex $v$ and $i(G)=1$ if and only if $v$ is a dominating vertex that is adjacent to a pendent vertex, that is, $G=(G-v)\vee\{v\}$ where $G-v$ is disconnected.
\end{corollary}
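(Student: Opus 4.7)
The proof falls naturally into two directions, and most of the work has already been done by the preceding lemmas and remark.

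For the forward direction, I would start from the assumption that $G$ has a cut--vertex $v$ and $i(G)=1$. Since $i(G)=1>0$, by Proposition \ref{sign:v=0} $G$ is connected, and being non--complete it is covered by Lemma \ref{sign:v=1}, which gives $\alpha(G)=\delta(G)$. Lemma \ref{i=1} then produces a decomposition $G=G_1\vee G_2$ with $|V(G_1)|=n-\delta(G)\ge 2$, $|V(G_2)|=\delta(G)$, and $G_1$ containing an isolated vertex. Now I would appeal to Remark \ref{order:G_1,G_2}: no cut--vertex of $G$ can lie in $G_1$ (since every vertex of $G_1$ is joined to all of $V(G_2)$ and $|V(G_1)|\ge 2$), and any cut--vertex of $G$ that lies in $G_2$ forces $|V(G_2)|=1$. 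Therefore $v$ must lie in $G_2$ and $G_2=\{v\}$, so $\delta(G)=1$ and $G=(G-v)\vee\{v\}$ with $v$ a dominating vertex. The isolated vertex of $G_1=G-v$ is adjacent in $G$ only to $v$, so it is the required pendant vertex; and $G-v$ is disconnected because $v$ is a cut--vertex.

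For the converse, I would take $v$ to be a dominating vertex of $G$ adjacent to a pendant vertex $u$, with $G-v$ disconnected. Set $G_1:=G-v$ and $G_2:=\{v\}$; then $G=G_1\vee G_2$. The pendant vertex $u$ has its only neighbour removed when we delete $v$, hence $u$ is isolated in $G_1$. Moreover $\mathrm{deg}_G(u)=1$, and because $G-v$ is disconnected (in particular $G$ is non--complete), $\delta(G)=1=|V(G_2)|$, so $G_1$ has $n-\delta(G)$ vertices as required. The condition $\alpha(G_2)\ge 2\delta(G)-n$ is $0\ge 2-n$, which is immediate for $n\ge 3$. Lemma \ref{i=1:a=delta} then yields $i(G)=1$, and $v$ is a cut--vertex by hypothesis.

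I do not anticipate any real obstacle: the machinery from Lemma \ref{i=1}, Lemma \ref{i=1:a=delta}, and Remark \ref{order:G_1,G_2} does almost all of the work. The only delicate step is the bookkeeping in the forward direction that ties together ``$v$ is a cut--vertex'' with ``$v$ must lie in $G_2$ and $|V(G_2)|=1$,'' which is exactly what the remark provides. Everything else reduces to translating ``$G_1$ has an isolated vertex'' into ``$v$ has a pendant neighbour in $G$.''
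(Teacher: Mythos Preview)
Your proof is correct and follows essentially the same route as the paper's: the forward direction invokes Remark~\ref{order:G_1,G_2} to force $G_2=\{v\}$, and the converse applies Lemma~\ref{i=1:a=delta} after identifying the pendant vertex as the isolated vertex of $G_1$. You spell out a few steps (e.g., $\delta(G)=1$ and the inequality $\alpha(G_2)\ge 2\delta(G)-n$) that the paper leaves implicit, but the argument is the same.
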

\begin{proof}
	Suppose that $v$ is a cut--vertex in $G$ and that $i(G)=1$. By Remark \ref{order:G_1,G_2}, $G$ is expressed as $G_1\vee G_2$ where $G_1$ contains an isolated vertex $w$ and $G_2=\{v\}$. It is straightforward that $v$ is a dominating vertex and is adjacent to $w$, which is a pendent vertex.
	
	Conversely, suppose that $v\in V(G)$ is a dominating vertex and is adjacent to a pendent vertex $w$. Let $G_1=G-v$ and $G_2=\{v\}$. Then, $w$ is an isolated vertex in $G_1$ and $G=G_1\vee G_2$. By Lemma \ref{i=1:a=delta}, we have the desired result. 
\end{proof}

Thus, the following theorem is obtained by Lemmas \ref{sign:v=1}, \ref{i=1} and \ref{i=1:a=delta}.

\begin{theorem}\label{equiv:i(G)=1}
	Let $G$ be a non--complete, connected graph of order $n$. Then, the following are equivalent:
	\begin{enumerate}
		\item $i(G)=1$,
		\item $\alpha(G)=\delta(G)$,
		\item $G$ can be written as a join of $G_1$ and $G_2$ where the graph $G_1$ on $n-\delta(G)$ vertices has an isolated vertex, $G_2$ is a graph on $\delta(G)$ vertices, and $\alpha(G_2)\geq 2\delta(G)-n$.
	\end{enumerate}
\end{theorem}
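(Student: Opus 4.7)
The plan is a direct assembly: I will establish the cycle of implications $(1) \Rightarrow (2) \Rightarrow (3) \Rightarrow (1)$ by invoking the three preceding lemmas in turn, with essentially no new work.

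First, $(1) \Rightarrow (2)$ is immediate from Lemma \ref{sign:v=1}: if $i(G)=1$, then there is a Fiedler vector with a unique negative entry, and the row of $(L(G)-\alpha(G)I)\bx=0$ corresponding to that entry forces $\alpha(G)\geq\delta(G)$; the reverse inequality $\alpha(G)\leq\delta(G)$ for non-complete graphs (via $\alpha(G)\leq v(G)\leq\delta(G)$) gives equality. Note that non-completeness and connectedness force $n\geq 3$, matching the hypothesis of Lemma \ref{sign:v=1}.

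Next, the forward direction of Lemma \ref{i=1} delivers $(2)\Rightarrow(3)$: from $\alpha(G)=\delta(G)$ one produces a join decomposition $G=G_1\vee G_2$ with $|V(G_1)|=n-\delta(G)$, an isolated vertex in $G_1$, and $\alpha(G_2)\geq 2\delta(G)-n$. This is the technically deepest step, relying on the inductive argument in Lemma \ref{i=1}, which in turn uses the characterization of $\alpha(G)=v(G)$ from \cite{Steve:connectivity} and a separate analysis of whether $G_1$ already contains an isolated vertex. The converse direction of Lemma \ref{i=1} is not required for the implication chain chosen here.

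Finally, $(3)\Rightarrow(1)$ is exactly Lemma \ref{i=1:a=delta}: given the join structure in (3), the vector that assigns $-(|V(G_1)|-1)$ to the isolated vertex of $G_1$, $1$ to every other vertex of $G_1$, and $0$ to every vertex of $G_2$, is a Fiedler vector with exactly one negative component, so $i(G)\leq 1$; combined with $i(G)\geq 1$ from Proposition \ref{sign:v=0} (since $G$ is connected with $n\geq 3$), we obtain $i(G)=1$. There is no genuine obstacle at this stage — all substantive content lives in the three lemmas already proved, and the only care required is to verify that the hypotheses (non-completeness, connectedness, $n\geq 3$) are consistent across the three invocations, which they are.
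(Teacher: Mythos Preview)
Your proposal is correct and takes essentially the same approach as the paper: the paper's proof is simply the one-line statement that the theorem follows from Lemmas \ref{sign:v=1}, \ref{i=1}, and \ref{i=1:a=delta}, which is exactly the cycle $(1)\Rightarrow(2)\Rightarrow(3)\Rightarrow(1)$ you spell out.
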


\begin{proposition}
	Suppose that $G$ is a connected graph of order $n\geq 3$ and $i(G)\neq 1$. Then, we can construct a graph $G'$ such that $i(G')=1$ and $G$ is an induced subgraph of $G'$ by adding at most two vertices and joining them to some vertices of $G$. In particular, we need only one vertex if $G$ is a join. Otherwise, we need two vertices.
\end{proposition}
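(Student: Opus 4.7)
The plan is to let Theorem \ref{equiv:i(G)=1} do all the work. It says that for a non--complete connected graph $G'$ of order $n'$, $i(G')=1$ is equivalent to a join decomposition $G'=G_1'\vee G_2'$ in which $G_1'$ has an isolated vertex, $|V(G_1')|=n'-\delta(G')$, and $\alpha(G_2')\ge 2\delta(G')-n'$. My task is therefore to exhibit one or two vertex additions realizing such a structure, and then to show that one addition cannot suffice when $G$ is not a join.

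For the existence half I split on whether $G$ is a join. If $G=H_1\vee H_2$, I relabel so that $|V(H_1)|\ge |V(H_2)|$ and add a single new vertex $v$ adjacent to every vertex of $H_2$ and to no vertex of $H_1$; then $G'=(H_1+\{v\})\vee H_2$, with $v$ serving as an isolated vertex of the first factor. A short degree count gives $\delta(G')=|V(H_2)|$, the ordering $|V(H_1)|\ge |V(H_2)|$ being exactly what keeps the vertices of $H_2$ from dragging the minimum below $|V(H_2)|$; the algebraic-connectivity inequality is trivial because $2|V(H_2)|-n'<0\le \alpha(H_2)$. When $G$ is not a join, I add two new vertices $v_1,v_2$, with $v_1$ joined to all of $V(G)\cup\{v_2\}$ and $v_2$ having no further neighbor; then $G'=(G+\{v_2\})\vee\{v_1\}$, $\delta(G')=1$ is witnessed by $v_2$, and the algebraic-connectivity inequality collapses to $0\ge -n$. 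Both constructions are plainly non--complete and connected with $G$ an induced subgraph, so Theorem \ref{equiv:i(G)=1} delivers $i(G')=1$ in each.

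Finally, I will rule out the one-vertex option when $G$ is not a join. Assume for contradiction that $G'$ is built by adding a single vertex $v$ and that $i(G')=1$; Theorem \ref{equiv:i(G)=1} then provides $G'=G_1'\vee G_2'$ as above, and Remark \ref{order:G_1,G_2} forces $|V(G_1')|\ge 2$. If $v\in V(G_1')$, or if $v\in V(G_2')$ with $|V(G_2')|\ge 2$, then deleting $v$ from the decomposition exhibits $G=G'-v$ as a nontrivial join, contradicting the hypothesis; the only remaining possibility, $V(G_2')=\{v\}$, would force the isolated vertex of $G_1'=G$ to be an isolated vertex of $G$ itself, which is impossible for a connected graph on $n\ge 3$ vertices. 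The main obstacle I anticipate is the degree bookkeeping in the one-vertex construction: relabeling so that $v$ is attached to the smaller side $H_2$ is exactly what secures the equality $|V(G_1')|=n'-\delta(G')$ demanded by Theorem \ref{equiv:i(G)=1}, whereas attaching $v$ to the larger side can leave $\delta(G')$ strictly below the target and spoil the decomposition.
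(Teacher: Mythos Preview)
Your proof is correct and follows essentially the same route as the paper's. The constructions and the one-vertex impossibility argument coincide; the only cosmetic difference is that in the join case you verify condition (3) of Theorem \ref{equiv:i(G)=1} via the degree count $\delta(G')=|V(H_2)|$, whereas the paper verifies condition (2) by computing $\alpha(G')=\min\{|V(H_2)|,\alpha(H_2)+|V(H_1)|+1\}=|V(H_2)|$ directly.
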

\begin{proof}
	Suppose that $G$ can be expressed as a join of two graphs, say $H_1$ of order $n_1$ and $H_2$ of order $n_2$ where $n_1\geq n_2$. Let $G'$ be $(\{v\}+H_1)\vee H_2$ for a new vertex $v$. Then, $\delta(G')=n_2$. Since $\alpha(G')=\mathrm{min}\{n_2,a(H_2)+n_1\}$, we have $\delta(G')=\alpha(G')$, and $i(G')=1$.
	
	Assume that $G$ is not a join of some graphs. Let $H_1=\{v\}+G$ and $H_2=\{w\}$ where $v\neq w$. Consider $G'=H_1\vee H_2$. Since $H_1$ contains an isolated vertex and $\alpha(H_2)=0\geq 2\delta(G')-n$, by Theorem \ref{equiv:i(G)=1}, $i(G')=1$. It remains to show that every graph $H$ obtained from a graph $G$ by adding just one new vertex $v$ and joining it to some vertices does not satisfy $i(H)=1$. Suppose to the contrary that there exists such a graph $H$ with $i(H)=1$. By Theorem \ref{equiv:i(G)=1} and Remark \ref{order:G_1,G_2}, $H$ is expressed as a join of two graphs $G_1$ and $G_2$ where $G_1$ has an isolated vertex and $|V(G_1)|\geq 2$. Suppose that the new vertex $v$ is in $G_1$. Since $|V(G_1)|\geq 2$, a removal of $v$ in $H$ results in the graph $G$ that is a join of some graphs, a contradiction. Hence, $v\in V(G_2)$. Furthermore, $G_2=\{v\}$, for otherwise, $G$ would be written as a join of some graphs. Thus, $G=G_1$ and so $G$ is disconnected. This contradicts  the hypothesis that $G$ is connected. Therefore, we need to add at least two vertices to have a connected graph $G'$ with the desired properties. 
\end{proof}


\section{Algebraic multiplicity of a graph with $i(G)=1$}


Recall that $i(\bx)$ is defined as the minimum number of negative components in $\bx$ or $-\bx$. 

\begin{example}\label{eg:am}
	Let $G_1=K_2+N_1$ and $G_2=N_1\vee N_3$. Since $G_1$ has an isolated vertex and $\alpha(G_2)=2\delta(G_1\vee G_2)-7$, we have $i(G_1\vee G_2)=1$ by Theorem \ref{equiv:i(G)=1}. Furthermore, $\alpha(G_1\vee G_2)=4$ and $am(\alpha(G_1\vee G_2))=3$. Labeling vertices in order of $V(G_1)$ and $V(G_2)$, there are three linearly independent Fiedler vectors corresponding to $\alpha(G_1\vee G_2)$:
	\begin{align*}
	\bx_1^T&=\begin{bmatrix}
	1 & 1 &-2 & 0 & 0 & 0 & 0
	\end{bmatrix};\\
	\bx_2^T&=\begin{bmatrix}
	0 & 0 & 0 & 0 & 1 & -1 & 0
	\end{bmatrix};\\
	\bx_3^T&=\begin{bmatrix}
	0 & 0 & 0 & 0 & 1 & 0 & -1
	\end{bmatrix}.
	\end{align*}
	Therefore, $i(\bx_1+\bx_2)=2$ and $i(\bx_1+\bx_2+\bx_3)=3$.
\end{example}
Let $G$ be a non--complete graph of order $n$ with $i(G)=1$. So, $G$ can be written as $G=G_1\vee G_2$ where the graph $G_1$ on $n-\delta(G)$ vertices contains an isolated vertex, and $G_2$ is a graph on $\delta(G)$ vertices with $\alpha(G_2)\geq 2\delta(G)-n$. We observe from Example \ref{eg:am} that if $\alpha(G_2)=2\delta(G)-n$, then $am(\alpha(G_2))$ must be considered to compute $am(\alpha(G))$. Let $\beta(H)$ denote the number of connected components in a graph $H$. Since the algebraic multiplicity of the eigenvalue $0$ of $G_1$ is $\beta(G_1)$, by considering $G=G_1\vee G_2$, we have
\begin{align}\label{identity:am}
am(\alpha(G))=\begin{cases}
\beta(G_1)-1+am(\alpha(G_2)), & \text{if $\alpha(G_2)=2\delta(G)-n$},\\
\beta(G_1)-1, & \text{if $\alpha(G_2)>2\delta(G)-n$}.
\end{cases}
\end{align}
Moreover, from Example \ref{eg:am}, we see that for a non--complete connected graph $G$ the condition that $i(G)=1$ and $am(\alpha(G))>1$ does not guarantee that $i(\bx)=1$ for every Fiedler vector $\bx$. 

\begin{proposition}\label{am:i(x)}
	Let $G$ be a non--complete graph of order $n$ and $i(G)=1$. Suppose that $G\neq N_3\vee G'$ for any graph $G'$ with $\alpha(G')> 2\delta(G)-n$. Then, $am(\alpha(G))>1$ if and only if there exists a Fiedler vector $\bx$ such that $i(\bx)>1$.
\end{proposition}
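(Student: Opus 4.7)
My plan is to leverage the join decomposition from Theorem~\ref{equiv:i(G)=1}: since $i(G) = 1$, I would write $G = G_1 \vee G_2$ with $p := |V(G_1)| = n - \delta(G)$ and $q := |V(G_2)| = \delta(G)$, where $G_1$ has an isolated vertex and $\alpha(G_2) \geq 2\delta(G) - n$. Using the join spectrum recalled in the introduction, the Fiedler eigenspace of $G$ (for eigenvalue $\alpha(G) = q$) is spanned by vectors of the form $[\bx^T, \mathbf{0}^T]$, where $\bx$ is constant on the components of $G_1$ and orthogonal to $\mathbf{1}_p$, together with vectors $[\mathbf{0}^T, \by^T]$ where $\by$ is an eigenvector of $L(G_2)$ for $\alpha(G_2)$ orthogonal to $\mathbf{1}_q$---the latter only when $\alpha(G_2) = 2\delta(G) - n$. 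The backward direction is then immediate: a Fiedler vector $\by$ with $i(\by) = 1$ exists by the hypothesis $i(G) = 1$, and since $i(\cdot)$ is scale-invariant, any Fiedler vector $\bx$ with $i(\bx) > 1$ cannot be a scalar multiple of $\by$, forcing $am(\alpha(G)) \geq 2$.

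For the forward direction, I would split according to equation~(\ref{identity:am}). When $\alpha(G_2) = 2\delta(G) - n$, both families of eigenvectors are present. I would pick any nonzero $\bx$ constant on the components of $G_1$ with $\bx \perp \mathbf{1}_p$ (which exists because $G_1$ is disconnected) and any nonzero eigenvector $\by$ of $L(G_2)$ for $\alpha(G_2)$ orthogonal to $\mathbf{1}_q$ (which exists because $\alpha(G_2)$ is an eigenvalue of $L(G_2)$, using $\beta(G_2) \geq 2$ in the edge case $\alpha(G_2) = 0$). Each of $\bx, \by$ is nonzero and orthogonal to an all-ones vector, so each has at least one strictly positive and one strictly negative entry. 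Because $\bx$ and $\by$ are supported on disjoint vertex sets, $\bz := [\bx^T, \by^T]$ is a Fiedler vector with at least two positive and two negative entries, giving $i(\bz) \geq 2$.

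When $\alpha(G_2) > 2\delta(G) - n$, formula~(\ref{identity:am}) forces $\beta(G_1) \geq 3$, and the exclusion $G \neq N_3 \vee G'$ says $G_1 \neq N_3$. Only the $G_1$-type vectors are available, so I must produce $\bx$ constant on the components of $G_1$, orthogonal to $\mathbf{1}_p$, with at least two positive and at least two negative entries. The analysis splits on component sizes: if two components have $\geq 2$ vertices, assign them opposite-sign values (scaled by component sizes to enforce orthogonality); if exactly one component has $\geq 2$ vertices, assign it a positive value and two singleton components negative values (there are $\geq 2$ singletons because $\beta(G_1) \geq 3$); if all components are singletons, then $G_1 \neq N_3$ forces $\beta(G_1) \geq 4$, allowing two positive and two negative singletons. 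The main subtlety to emphasize is that the excluded case $G_1 = N_3$---three singletons with a single linear constraint $a+b+c=0$ forcing $i \leq 1$---is genuinely the unique obstruction, which explains why the hypothesis $G \neq N_3 \vee G'$ is exactly what is needed to make the construction go through.
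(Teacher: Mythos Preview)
Your proposal is correct and follows essentially the same approach as the paper: both use the join decomposition $G=G_1\vee G_2$ from Theorem~\ref{equiv:i(G)=1}, handle the backward direction via linear independence of a vector with $i=1$ and one with $i>1$, and for the forward direction split on whether $\alpha(G_2)=2\delta(G)-n$ or $\alpha(G_2)>2\delta(G)-n$, constructing an explicit Fiedler vector in each case. The only cosmetic difference is that in the case $\alpha(G_2)>2\delta(G)-n$ the paper gives a single uniform construction (taking the two smallest components $H_1,H_2$ of $G_1$, assigning $-1$ on $H_1$, a suitable negative constant on $H_2$, and $1$ on the remaining components) rather than your three-way case split on component sizes; both arguments are equally valid and rely on the same exclusion $G_1\neq N_3$.
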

\begin{proof}
	Suppose that $am(\alpha(G))>1$. Since $i(G)=1$, there are graphs $G_1$ and $G_2$ such that $G=G_1\vee G_2$ where the graph $G_1$ on $n-\delta(G)$ vertices contains an isolated vertex  and $G_2$ is a graph of order $\delta(G)$ with $\alpha(G_2)\geq 2\delta(G)-n$. Assume that $\alpha(G_2)> 2\delta(G)-n$. From \eqref{identity:am}, we find that there are at least three connected components in $G_1$. Since $G_1\neq N_3$, $|V(G_1)|\geq 4$. Choose two components $H_1$ and $H_2$ of $G_1$ such that $H_1$ and $H_2$ are the smallest and second smallest orders in $G_1$. Then, $H_1=N_1$. Labeling vertices in order of $V(H_1)$, $V(H_2)$, $V(G_1)\backslash(V(H_1)\cup V(H_2))$ and $V(G_2)$, there exists a Fiedler vector $$\bx^T=\begin{bmatrix}
	-1 & -(\frac{|V(G_1)|-|V(H_1)|-|V(H_2)|-1}{|V(H_2)|})\mathbf{1}_{|V(H_2)|}^T & \mathbf{1}_{|V(G_1)|-|V(H_1)|-|V(H_2)|}^T & \mathbf{0}_{|V(G_2)|}^T
	\end{bmatrix}.$$ Then, $\bx$ and $-\bx$ have $|V(H_1)|+|V(H_2)|$ and $|V(G_1)|-|V(H_1)|-|V(H_2)|$ negative components, respectively. It is clear that $|V(H_1)|+|V(H_2)|\geq 2$. Since $G_1\neq N_3$ and $H_1$ and $H_2$ are the components of the smallest and second smallest orders in $G_1$, we have $|V(G_1)|-|V(H_1)|-|V(H_2)|\geq 2$. Therefore, $i(\bx)\geq 2$. 
	
	Suppose that $\alpha(G_2)=2\delta(G)-n$. Let $v$ be an isolated vertex in $G_1$. Then, we have a Fiedler vector $\bx_1=\begin{bmatrix}
	\mathbf{1}_{|V(G_1)|}-|V(G_1)|\be_v\\
	\mathbf{0}_{|V(G_2)|}
	\end{bmatrix}$ where $|V(G_1)|\geq 2$. Choose an eigenvector $\by$ corresponding to $\alpha(G_2)$ such that $\by^T\mathbf{1}=0$ and $i(\by)>0$. Since $\alpha(G_2)=2\delta(G)-n$, $\bx_2=\begin{bmatrix}
	\mathbf{0}_{|V(G_1)|}\\
	\by
	\end{bmatrix}$ is a Fiedler vector of $G$. Then, $i(\bx_1+\bx_2)>1$.
	
	Suppose that there is a Fiedler vector $\bx$ such that $i(\bx)>1$. By hypothesis, there is a Fiedler vector $\bx'$ such that $i(\bx')=1$. Evidently, $\bx'$ is not a scalar multiple of $\bx$, so those two vectors are linearly independent. Hence, $am(\alpha(G))\geq 2$.
\end{proof}

Proposition \ref{am:i(x)} establishes that the condition that $i(G)=1$ and $am(\alpha(G))=1$ forces  any Fiedler vector $\bx$ to have $i(\bx)=1$. Moreover, the set of all graphs $G$ such that $am(\alpha(G))>1$ and $i(\bx)=1$ for all Fiedler vectors $\bx$ is $$\{N_3\vee G'| G' \text{ is a graph with } \alpha(G')> 2\delta(N_3\vee G')-|V(N_3\vee G')|\}.$$ 

We will characterize graphs with $i(G)=1$ and $am(\alpha(G))=1$ by studying the relation between $am(\alpha(G))$ and the number of vertices of degree $\delta(G)$. Before presenting the characterization, lower bounds on $am(\alpha(G))$ will be derived.

\begin{lemma}\label{am.isol2}
	Let $G$ be a non--complete connected graph of order $n$. There are exactly $\ell$ vertices of degree $\delta(G)$ and $i(G)=1$ if and only if for some $k\geq 1$ there are graphs $G_1,\dots,G_k$ satisfying the following conditions:
	\begin{enumerate}
		\item $|V(G_1)|=\cdots=|V(G_{k})|=n-\delta(G)\geq 2$; \label{threeconditions1}
		\item for $i=1,\dots, k$ each $G_i$ contains $\ell_i(\geq 1)$ isolated vertices of degree $\delta(G)$ in $G$, and $\ell=\sum_{j=1}^{k}\ell_j$;\label{threeconditions2}
		\item $G$ is described by one of two cases: \label{threeconditions3}
		\begin{enumerate}
			\item\label{threeconditions31} $G=\vee_{j=1}^{k}G_j$ or 
			\item\label{threeconditions32} $G=(\vee_{j=1}^{k}G_j)\vee G'$ where $G'$ is a graph on $k\delta(G)-(k-1)n$ vertices such that $\mathrm{deg}_G(v)>\delta(G)$ for all $v\in V(G')$ and $\alpha(G')\geq (k+1)\delta(G)-kn$. 
		\end{enumerate}
	\end{enumerate}
\end{lemma}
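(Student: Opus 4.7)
My plan is to prove both directions by induction on $n$, using Theorem \ref{equiv:i(G)=1} as the main tool for peeling off one join factor of size $n-\delta(G)$ at a time; the $G_j$'s produced by the lemma are exactly these successive peels, and $G'$ is whatever remains once no further peeling is possible.

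For the forward direction, starting from $i(G)=1$ with exactly $\ell$ vertices of degree $\delta(G)$, Theorem \ref{equiv:i(G)=1} writes $G=F\vee H$ with $|V(F)|=n-\delta(G)\geq 2$, $F$ containing an isolated vertex, and $\alpha(H)\geq 2\delta(G)-n$. I will use the following bookkeeping throughout: a vertex $v\in V(F)$ satisfies $\mathrm{deg}_G(v)=\delta(G)$ if and only if it is isolated in $F$, while $v\in V(H)$ satisfies $\mathrm{deg}_G(v)=\delta(G)$ if and only if $\mathrm{deg}_H(v)=2\delta(G)-n$. If $H$ has no such vertex, I terminate with $k=1$, $G_1=F$, $G'=H$, giving case (\ref{threeconditions32}). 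Otherwise $\delta(H)=2\delta(G)-n$, and $H$ is forced to be non-complete (else every $v\in V(H)$ would satisfy $\mathrm{deg}_G(v)=n-1>\delta(G)$), so combining $\alpha(H)\geq 2\delta(G)-n=\delta(H)$ with the general bound $\alpha(H)\leq\delta(H)$ gives $\alpha(H)=\delta(H)$ and hence $i(H)=1$. When $2\delta(G)-n>0$ the graph $H$ is connected of order $\delta(G)<n$, and applying the inductive hypothesis to $H$ produces a decomposition with parts of size $\delta(G)-\delta(H)=n-\delta(G)$; prepending $G_1=F$ yields the desired decomposition of $G$. The size identity $|V(G')|=k\delta(G)-(k-1)n$ and the bound $\alpha(G')\geq(k+1)\delta(G)-kn$ fall out cleanly from the inductive statement via $\delta(H)=2\delta(G)-n$ and $|V(H)|=\delta(G)$.

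For the reverse direction, given the decomposition I write $G=G_1\vee H$ where $H$ is the join of $G_2,\dots,G_k$ (and $G'$ if present); since $|V(H)|=\delta(G)$, it suffices by Lemma \ref{i=1:a=delta} to verify $\alpha(H)\geq 2\delta(G)-n$. Iterating the join spectrum formula recalled before Section 2, every nonzero eigenvalue of $L(H)$ is either $\delta(G)$, or of the form $\lambda_i(G_j)+(2\delta(G)-n)$ for some $j\geq 2$, or $\lambda_i(G')+(k-1)(n-\delta(G))$; each is at least $2\delta(G)-n$, where the last family uses exactly the hypothesis $\alpha(G')\geq(k+1)\delta(G)-kn$. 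The vertex count is then direct: an isolated vertex of some $G_j$ has $G$-degree $(k-1)(n-\delta(G))+|V(G')|=\delta(G)$, a non-isolated vertex of $G_j$ has strictly larger degree, and every vertex of $G'$ has degree $>\delta(G)$ by hypothesis, so the number of minimum-degree vertices of $G$ equals $\sum_j\ell_j=\ell$.

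The main obstacle is the boundary case $n=2\delta(G)$: here $2\delta(G)-n=0$ and $\alpha(H)=0$, so $H$ is disconnected and hence not expressible as a non-trivial join, meaning the induction cannot be continued inside $H$. In this case one must instead stop with $k=2$, $G_1=F$, $G_2=H$, case (\ref{threeconditions31}), using the symmetry $n-\delta(G)=\delta(G)$ to see that both $F$ and $H$ contribute isolated vertices whose $G$-degree equals $\delta(G)$. A secondary subtlety, worth writing out carefully, is that the isolated vertices inside each peeled $G_j$ (for $j\geq 2$) must be verified to be minimum-degree in $G$ rather than only in the intermediate graph $H$; this follows from the telescoping identity $\mathrm{deg}_H(v)+(n-\delta(G))=\delta(H)+(n-\delta(G))=\delta(G)$ applied at each recursive step.
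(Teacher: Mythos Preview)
Your proposal is correct and follows essentially the same route as the paper's own proof: use Theorem~\ref{equiv:i(G)=1} to peel off one factor $G_1$ of size $n-\delta(G)$, show that the remaining piece $H$ again satisfies $\alpha(H)=\delta(H)$ whenever it still carries minimum-degree vertices, and recurse, with the disconnected case $n=2\delta(G)$ handled separately by stopping at $k=2$. The only difference is that the paper inducts on $\ell$ while you induct on $n$; since each peel strictly decreases both quantities, this is an inessential variation and the structural arguments coincide.
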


\begin{proof}
	We will use induction on $\ell$ to prove the necessity of conditions \eqref{threeconditions1},\eqref{threeconditions2} and \eqref{threeconditions3} in order for $G$ to have exactly $\ell$ vertices of degree $\delta(G)$ and $i(G)=1$. The case $\ell=1$ follows immediately from Theorem \ref{equiv:i(G)=1}. Let $\ell\geq 2$. Since $G$ is non--complete and $i(G)=1$, $G$ can be written as a join of two graphs $\hat{G}_1$ and $\hat{G}_2$ where $\hat{G}_1$ is a graph on $n-\delta(G)$ vertices with an isolated vertex and $\hat{G}_2$ is a graph on $\delta(G)$ vertices with $\alpha(\hat{G}_2)\geq 2\delta(G)-n$. The order of $\hat{G}_1$ is more than $1$ by Remark \ref{order:G_1,G_2}. If $\hat{G}_1$ contains $\ell$ isolated vertices, then $\mathrm{deg}_{G}(v)>\delta(G)$ for all $v\in V(\hat{G}_2)$. By choosing $G_1=\hat{G}_1$ and $G'=\hat{G}_2$, we have the desired result with $k=1$, which corresponds to the case \eqref{threeconditions32}. Assume that there are $\ell_1$ isolated vertices in $\hat{G}_1$ where $\ell_1<\ell$. Then, $\hat{G}_2$ contains exactly $\hat{\ell}_2:=\ell-\ell_1$ vertices of degree $\delta(G)$ in $G$. Since $\delta(G)$ is the minimum degree in $G$, the $\hat{\ell}_2$ vertices are also of the minimum degree in $\hat{G}_2$. We have $\delta(\hat{G}_2)=2\delta(G)-n$ from the fact that $G=\hat{G}_1\vee\hat{G}_2$. If $\hat{G}_2$ is complete, then $\delta(\hat{G}_2)=\delta(G)-1$ and so $\delta(G)=n-1$, which  contradicts  the fact that $G$ is non--complete. Hence, $\hat{G}_2$ is a non--complete graph and $\delta(\hat{G}_2)\geq \alpha(\hat{G}_2)$. Since $\delta(\hat{G}_2)=2\delta(G)-n$ and $\alpha(\hat{G}_2)\geq 2\delta(G)-n$, we have $$\delta(\hat{G}_2)=\alpha(\hat{G}_2)=2\delta(G)-n.$$
		
	Assume that $\hat{G}_2$ is disconnected. Then $\alpha(\hat{G}_2)=0$, which yields $\delta(\hat{G}_2)=0$ and $\delta(G)=\frac{n}{2}$. Since $\delta(\hat{G}_2)=0$, the $\hat{\ell}_2$ vertices are the only isolated vertices in $\hat{G}_2$. Moreover, we have $|V(\hat{G}_1)|=|V(\hat{G}_2)|$ since $\delta(G)=\frac{n}{2}$. Setting up $\ell_2=\hat{\ell}_2$, $G_1=\hat{G}_1$, $G_2=\hat{G}_2$, we have the  result with $k=2$, which corresponds to \eqref{threeconditions31}. 
	
	Suppose now that $\hat{G}_2$ is connected. Then, $i(\hat{G}_2)=1$ by Theorem \ref{equiv:i(G)=1}. Since $\hat{\ell}_2<\ell$, by induction, there are graphs $G_2,\dots,G_k$ for some $k\geq 2$ satisfying the conditions:
	\begin{enumerate}[(i)]
	\item $|V(G_2)|=\cdots=|V(G_{k})|=\delta(G)-\delta(\hat{G}_2)=n-\delta(G)\geq 2$;
	\item for $i=2,\dots, k$ each $G_i$ contains $\ell_i(\geq 1)$ isolated vertices of degree $\delta(\hat{G}_2)$ in $\hat{G}_2$ with $\hat{\ell}_2=\sum_{j=2}^{k}\ell_j$; and 
	\item $\hat{G}_2$ is described by one of two cases:
	\begin{enumerate}
		\item $\hat{G}_2=\vee_{j=2}^{k}G_j$ or 
		\item $\hat{G}_2=(\vee_{j=2}^{k}G_j)\vee G'$ where $G'$ is a graph on $(k-1)\delta(\hat{G}_2)-(k-2)|V(\hat{G_2})|$ vertices such that $\mathrm{deg}_{\hat{G}_2}(v)>\delta(\hat{G}_2)$ for all $v\in V(G')$ and $\alpha(G')\geq k\delta(\hat{G}_2)-(k-1)|V(\hat{G_2})|$.
	\end{enumerate}
	\end{enumerate}	
	Clearly, the condition \eqref{threeconditions1} is satisfied. Since the $\hat{\ell}_2$ vertices in $\hat{G}_2$ have degree $\delta(G)$ in $G$, we have $\ell=\ell_1+\hat{\ell}_2=\sum_{j=1}^{k}\ell_j$. So, the condition \eqref{threeconditions2} is shown. Let $G_1=\hat{G}_1$. If $\hat{G}_2=\vee_{j=2}^{k}G_j$, we obtain the case \eqref{threeconditions31}. Suppose that $\hat{G}_2=(\vee_{j=2}^{k}G_j)\vee G'$. Considering the fact that $G=G_1\vee\hat{G}_2$, $\delta(\hat{G}_2)=2\delta(G)-n$ and $|V(\hat{G_2})|=\delta(G)$, it is straightforward to check the remaining conditions in \eqref{threeconditions32}. Therefore, our desired description of $G$ is obtained. 

	For the proof of the converse, suppose that there exists a graph $G$ with $G_1,\dots,G_k$ for some $k\geq 1$ satisfying the conditions \eqref{threeconditions1} and \eqref{threeconditions2} in the statement. For the case \eqref{threeconditions31}, $G$ contains $\ell$ vertices of degree $\delta(G)$ by the condition \eqref{threeconditions2}. Consider the case \eqref{threeconditions32}. Since $\mathrm{deg}_{G}(v)>\delta(G)$ for all $v\in V(G')$, $G$ contains exactly $\ell$ vertices of degree $\delta(G)$. It remains to show $i(G)=1$. Suppose that $G$ is as in case \eqref{threeconditions32}. Note that $\alpha(G')\geq (k+1)\delta(G)-kn$. So, $\alpha(G)$ can be obtained from the eigenvalue $0$ in $G_1$ by computing the spectrum of the join so that $$\alpha(G)=(k-1)(n-\delta(G))+|V(G')|=\delta(G).$$ Therefore, by Theorem \ref{equiv:i(G)=1}, $i(G)=1$. Similarly, for the case \eqref{threeconditions31}, it is straightforward to show that $\alpha(G)=\delta(G)$.
\end{proof}

\begin{remark}\label{uniqueness}{\rm{
	Continuing with the notation and terminology of Lemma \ref{am.isol2}, we have $|V(G')|=k\delta(G)-(k-1)n$ and $|V(G_1)|=n-\delta(G)$. So, $$\alpha(G')\geq (k+1)\delta(G)-kn=|V(G')|-|V(G_1)|.$$
	Furthermore, we observe that the complement $\bar{G}_i$ of each $G_i$ for $i=1,\dots,k$ is connected, so $G_i$ can not be expressed as a join of graphs. Thus, the decomposition of $G$ in terms of joins in Lemma \ref{am.isol2} is unique, (up to the ordering of the graphs). In particular, $k$ is uniquely determined.}}
\end{remark}


\begin{definition}\label{def:elemetary k-join}
	Let $\ell\geq 1$. Graphs $H_1,\dots,H_\ell$ are called \textit{elementary} if
	\begin{enumerate}
		\item $|V(H_1)|=\cdots=|V(H_{\ell})|\geq 2$ and\label{c1}
		\item each $H_i$ for $i=1,\dots,\ell$ contains at least one isolated vertex.\label{c2}
	\end{enumerate} 
	A graph $G$ is said to be an \textit{elementary $k$--join} if $G$ can be written as $G=\vee_{j=1}^{k}G_j$ for some $k\geq 2$ such that $G_1,\dots, G_k$ are elementary. The graphs $G_1,\dots,G_k$ are called \textit{elementary} graphs of $G$.
\end{definition}

\begin{definition}\label{def:combined k-join}
	A graph $G$ on $n$ vertices is said to be a \textit{combined $k$--join} if $G$ can be expressed as $G=(\vee_{j=1}^{k}G_j)\vee G'$ for some $k\geq 1$ such that $G_1,\dots,G_k$ are elementary and $G'$ is a graph on $k\delta(G)-(k-1)n$ vertices such that $\mathrm{deg}_G(v)>\delta(G)$ for all $v\in V(G')$ and $\alpha(G')\geq |V(G')|-|V(G_1)|$. The graphs $G_1,\dots,G_k$ and the graph $G'$ are called the \textit{elementary} graphs and the \textit{combined} graph of $G$, respectively.
\end{definition}

\begin{remark}{\rm{
	If $G$ is an elementary $k$--join, then $k\geq 2$. Otherwise, $G$ would be disconnected. Considering Remark \ref{uniqueness}, an elementary $k$--join $G$ does not imply that $G$ is a combined $k$--join, vice versa.}}
\end{remark}

\begin{definition}\label{def:k-join}
	A graph $G$ is called to be a $k$--join if $G$ is either an elementary $k$--join or a combined $k$--join. 
\end{definition}

\begin{remark}{\rm{
	A $k$--join is not a complete graph.}}
\end{remark}

The following result is straightforward from Lemma \ref{am.isol2}.

\begin{theorem}
	Let $G$ be a non--complete connected graph. Then, $i(G)=1$ if and only if $G$ is a $k$--join.
\end{theorem}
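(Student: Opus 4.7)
The plan is to derive both implications directly from Lemma~\ref{am.isol2}; Definitions \ref{def:elemetary k-join}, \ref{def:combined k-join}, and \ref{def:k-join} have been set up precisely so that the two structural cases \ref{threeconditions31} and \ref{threeconditions32} of that lemma correspond verbatim to the two branches of the definition of a $k$-join. So the proof is really a matter of matching notation.

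For the forward direction, suppose $i(G)=1$, and let $\ell\geq 1$ denote the number of vertices of $G$ of degree $\delta(G)$. Applying Lemma~\ref{am.isol2} to $G$ with this value of $\ell$ produces graphs $G_1,\dots,G_k$ satisfying conditions \ref{threeconditions1} and \ref{threeconditions2}, together with $G$ falling into one of the two structural cases. In case \ref{threeconditions31} we have $G=\vee_{j=1}^{k}G_j$; because each $G_j$ contains an isolated vertex, the connectedness of $G$ forces $k\geq 2$, and this is precisely the content of Definition~\ref{def:elemetary k-join}. In case \ref{threeconditions32} we have $G=(\vee_{j=1}^{k}G_j)\vee G'$ with $|V(G')|=k\delta(G)-(k-1)n$ and $\alpha(G')\geq (k+1)\delta(G)-kn$; using the identity $|V(G')|-|V(G_1)| = (k+1)\delta(G)-kn$ recorded in Remark~\ref{uniqueness}, this matches Definition~\ref{def:combined k-join}. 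Either way, $G$ is a $k$-join.

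For the converse, suppose $G$ is a $k$-join. If $G$ is an elementary $k$-join $\vee_{j=1}^{k}G_j$, let $\ell_j\geq 1$ be the number of isolated vertices of $G_j$ and set $\ell=\sum_{j=1}^k\ell_j$; the data $(G_1,\dots,G_k)$ then fulfils conditions \ref{threeconditions1}--\ref{threeconditions3} of Lemma~\ref{am.isol2} in the form \ref{threeconditions31}. If instead $G$ is a combined $k$-join $(\vee_{j=1}^{k}G_j)\vee G'$, the same choice of $\ell$ places us in case \ref{threeconditions32}, using once more the equivalence between $\alpha(G')\geq |V(G')|-|V(G_1)|$ and $\alpha(G')\geq (k+1)\delta(G)-kn$. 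In either situation, the converse half of Lemma~\ref{am.isol2} gives $i(G)=1$ (and, as a by-product, identifies $\ell$ as the number of vertices of $G$ of degree $\delta(G)$).

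The whole argument is bookkeeping rather than genuine mathematical work; the only point requiring any care is translating the numerical inequality on $\alpha(G')$ appearing in Lemma~\ref{am.isol2} into the more compact form used in Definition~\ref{def:combined k-join}, which is accomplished entirely by Remark~\ref{uniqueness}.
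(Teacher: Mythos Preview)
Your proposal is correct and matches the paper's own approach exactly: the paper states only that the result is ``straightforward from Lemma~\ref{am.isol2},'' and your argument simply unpacks this one-line justification by aligning conditions \ref{threeconditions1}--\ref{threeconditions3} of that lemma with Definitions~\ref{def:elemetary k-join}--\ref{def:k-join} (via Remark~\ref{uniqueness} for the inequality on $\alpha(G')$). There is nothing to add.
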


\begin{example}{\rm{
	Consider the Shrikhande graph $G'$ with parameters $(16,6,2,2)$, which is a strongly regular graph, see \cite{Brower:spectral}. By computation, $\alpha(G')=4$ and $am(\alpha(G'))=6$. Let $G_1=K_{11}+\{v\}$. Then $i(G_1\vee G')=1$ and it has only one vertex with the minimum degree, but $am(\alpha(G_1\vee G'))=7$. Moreover, $G_1\vee G'$ is a combined $1$--join.}}
\end{example}

\begin{theorem}\label{identity:amdecomposable}
	Suppose that $G$ is an elementary $k$--join and $G_1,\dots,G_k$ are the elementary graphs of $G$. Then, $am(\alpha(G))=\sum_{i=1}^{k}\beta(G_i)-k$. Assume that $G$ is a combined $k$--join, and $G_1,\dots,G_k$ and $G'$ are the elementary graphs and the combined graph of $G$, respectively. Then, 
	\begin{align*}
	am(\alpha(G))=\begin{cases}
	\sum_{i=1}^{k}\beta(G_i)-k+am(\alpha(G')), & \text{if $\alpha(G')=2\delta(G)-n$},\\
	\sum_{i=1}^{k}\beta(G_i)-k, & \text{if $\alpha(G')>2\delta(G)-n$}
	\end{cases}
	\end{align*}
\end{theorem}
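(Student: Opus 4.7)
The plan is to apply the spectral formula for joins stated in the preliminaries and to count carefully the eigenvectors of $L(G)$ producing the eigenvalue $\alpha(G)=\delta(G)$ (the equality being granted by Theorem \ref{equiv:i(G)=1}).

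I will first handle the elementary $k$-join $G=\vee_{i=1}^{k}G_i$ with $|V(G_i)|=p=n-\delta(G)$. Iterating the two-fold join formula, the Laplacian spectrum of $G$ decomposes into three disjoint families. The vector $\mathbf{1}_n$ yields the eigenvalue $0$; a $(k-1)$-dimensional subspace of vectors that are constant on each block $V(G_i)$ and orthogonal to $\mathbf{1}_n$ yields the eigenvalue $n$; and for each $i$ and each eigenvector $\bx$ of $L(G_i)$ orthogonal to $\mathbf{1}_p$ with eigenvalue $\lambda$, the vector obtained by padding $\bx$ with zeros in the other blocks is an eigenvector of $L(G)$ with eigenvalue $\lambda+(n-p)=\lambda+\delta(G)$. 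Since $0<\delta(G)<n$, only the third family can contribute to $\delta(G)$, and only when $\lambda=0$; because the nullspace of $L(G_i)$ is spanned by the indicator vectors of its components, its intersection with $\mathbf{1}_p^{\perp}$ has dimension $\beta(G_i)-1$. Summing over $i$ yields $am(\alpha(G))=\sum_{i=1}^{k}\beta(G_i)-k$.

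For the combined $k$-join $G=H_1\vee G'$ with $H_1=\vee_{i=1}^{k}G_i$, I will apply the two-fold join formula once more, using the spectrum of $H_1$ obtained above (or, when $k=1$, the spectrum of $G_1$ itself). The eigenvectors of $L(G)$ with eigenvalue $\delta(G)$ arise from two disjoint sources. First, vectors supported on $V(H_1)$ whose restriction is an eigenvector of $L(H_1)$ orthogonal to $\mathbf{1}_{|V(H_1)|}$ with eigenvalue $\delta(G)-|V(G')|$; a direct computation shows $\delta(G)-|V(G')|$ equals the eigenvalue of $L(H_1)$ produced by the nullspaces of the $L(G_i)$'s, so this family contributes exactly $\sum_{i=1}^{k}\beta(G_i)-k$ vectors. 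Second, vectors supported on $V(G')$ whose restriction is an eigenvector of $L(G')$ orthogonal to $\mathbf{1}_{|V(G')|}$ with eigenvalue $\delta(G)-|V(H_1)|=(k+1)\delta(G)-kn$; by the hypothesis $\alpha(G')\geq(k+1)\delta(G)-kn$, this value is at most $\alpha(G')$, so the contribution is $am(\alpha(G'))$ precisely when equality holds and $0$ otherwise.

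The main obstacle I anticipate is executing the iterated join computation for $H_1$ cleanly, identifying precisely which $L(H_1)$-eigenvectors lie in $\mathbf{1}_{|V(H_1)|}^{\perp}$ and tracking the shifted eigenvalue contributed by each block. A secondary subtlety appears at the boundary where $(k+1)\delta(G)-kn=0$ with $G'$ disconnected, since then the nullspace of $L(G')$ has dimension $\beta(G')\geq 2$ and the $\mathbf{1}_{|V(G')|}$ direction must be excluded from the count; I will reconcile this with the stated formula by noting that, in that borderline case, $am(\alpha(G'))=\beta(G')$ and cross-checking against the definition of a combined $k$-join to ensure the bookkeeping matches.
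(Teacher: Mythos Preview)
Your approach is essentially the same as the paper's: both arguments invoke the Laplacian spectrum of a join to count the eigenvectors producing the eigenvalue $\alpha(G)=\delta(G)$. The paper's proof is in fact a single sentence (``Considering the spectrum of a join of graphs, we immediately obtain the desired result''), so your proposal is simply a careful unpacking of that claim, including an astute observation about the borderline case $(k+1)\delta(G)-kn=0$ that the paper does not address.
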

\begin{proof}
	Considering the spectrum of a join of graphs, we immediately obtain the desired result. 
\end{proof}

Let $\cA_\ell$ be the set of all non--complete graphs $G$ with $\ell$ vertices of minimum degree $\delta(G)$ such that $i(G)=1$. For $G\in\cA_\ell$, $G$ is a $k$--join for some $1\leq k \leq \ell$. Note that if $k=1$, then $G$ is a combined $1$--join. In order to attain the minimum of $am(\alpha(G))$ where $G\in\cA_{\ell}$ is a $k$--join, by Theorem \ref{identity:amdecomposable} we only need to consider elementary $k$--joins and combined $k$--joins $G$ where the combined graph $G'$ of $G$ satisfies $\alpha(G')>2\delta(G)-|V(G)|$. Let $\cA_{\ell,k}$ denote the subset of $\cA_\ell$ that consists of elementary $k$--joins and such combined $k$--joins. Define $$m_{\ell,k}:=\mathrm{min}\{am(\alpha(G))|G\in\cA_{\ell,k}\}.$$
 We will investigate $m_{\ell,k}$ and families of graphs attaining $m_{\ell,k}$. Then, the greatest lower bound of $\{am(\alpha(G))|G\in\cA_\ell\}$ will be derived.

Let $G\in\cA_{\ell,k}$ where $1\leq k\leq \ell$. Let $G_1,\dots,G_k$ be the elementary graphs of $G$. For $i=1,\dots,k$, each $G_i$ contains at least one isolated vertex, say $v_i$, so $\beta(G_i)-1$ is the number of connected components in $G_i-v_i$. Since there are $\ell-k$ isolated vertices left in the disjoint union of $G_1-v_1,\dots,G_k-v_k$ by Theorem \ref{identity:amdecomposable}, we have
$$am(\alpha(G))=\ell-k+p(G)$$
where $p(G)$ is the number of components of order more than $1$ in the elementary graphs $G_1,\dots,G_k$ of $G$. Define $$p_{\ell,k}:=\mathrm{min}\{p(G)|G\in\cA_{\ell,k}\}.$$ Therefore, we have
$$m_{\ell,k}=\ell-k+p_{\ell,k}.$$
Then, $m_{\ell,k}$ can be completely determined by considering $3$ cases for $1\leq k\leq\ell$: (i) $k\mid\ell$ where $\ell\geq 2$ and $1\leq k <\ell$, (ii) $k=\ell$ or $k=\ell-1\geq 2$, (iii) $k\nmid \ell$ and $2\leq k\leq \ell-2$.

\begin{lemma}[Case (i)]\label{lemma1:glb}
	Let $G\in\cA_{\ell,k}$ where $\ell\geq 2$ and $1\leq k<\ell$. Suppose that $G_1,\dots,G_k$ are the elementary graphs of $G$. Then, $k\mid\ell$ if and only if $m_{\ell,k}=\ell-k$. In particular, $G_i=N_{a+1}$ for $i=1\dots,k$ where $a\geq 1$ and $\ell=(a+1)k$.
\end{lemma}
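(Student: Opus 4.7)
The plan is to reduce everything to the identity $m_{\ell,k}=\ell-k+p_{\ell,k}$ displayed just before the lemma, so that the statement $m_{\ell,k}=\ell-k$ becomes equivalent to $p_{\ell,k}=0$, i.e.\ to the existence of some $G\in\cA_{\ell,k}$ whose elementary graphs $G_1,\ldots,G_k$ have no component of order larger than $1$. The key observation is that $p(G)=0$ forces each $G_i$ to be a totally disconnected graph $N_{|V(G_i)|}$; by condition \eqref{c1} of Definition \ref{def:elemetary k-join}, all the $G_i$ then have the same size $a+1\geq 2$.

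For the direction $m_{\ell,k}=\ell-k\Rightarrow k\mid\ell$ (which also yields the ``in particular'' clause), I would take any $G\in\cA_{\ell,k}$ attaining $p(G)=0$ and argue that every vertex of such $G_i=N_{a+1}$ is isolated in $G_i$, hence has degree exactly $n-|V(G_i)|=\delta(G)$ in $G$. So $\ell_i=a+1$ for all $i$, and condition \eqref{threeconditions2} of Lemma \ref{am.isol2} gives $\ell=\sum_{i=1}^k\ell_i=k(a+1)$, so $k\mid\ell$ and the elementary graphs have exactly the claimed form.

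For the reverse direction $k\mid\ell\Rightarrow m_{\ell,k}=\ell-k$, I would exhibit an explicit $G\in\cA_{\ell,k}$ with $p(G)=0$. Setting $a+1:=\ell/k\geq 2$, the natural candidate is to take all elementary graphs equal to $N_{a+1}$. When $k\geq 2$, the graph $G:=\vee_{j=1}^{k}N_{a+1}$ is the complete multipartite graph $K_{a+1,\ldots,a+1}$, which by Definition \ref{def:elemetary k-join} is an elementary $k$--join; it is non--complete (since $a+1\geq 2$), has exactly $\ell=k(a+1)$ vertices of minimum degree $\delta(G)=(k-1)(a+1)$, and $i(G)=1$ by Theorem \ref{equiv:i(G)=1}. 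When $k=1$, the condition $k\mid\ell$ is automatic, and the combined $1$--join $G:=N_\ell\vee K_m$ (for any $m\geq 1$) lies in $\cA_{\ell,1}$: its combined graph $G'=K_m$ has $\alpha(G')=m\geq m-\ell=|V(G')|-|V(G_1)|$ as required in Definition \ref{def:combined k-join}. In either case $p(G)=0$, so $m_{\ell,k}\leq am(\alpha(G))=\ell-k$, and since $m_{\ell,k}\geq\ell-k$ is trivial, equality holds.

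The only mild subtlety is the $k=1$ case, where an elementary $1$--join does not exist (Remark after Definition \ref{def:k-join}), so one must instead produce a combined $1$--join and verify the inequality $\alpha(G')\geq|V(G')|-|V(G_1)|$ of Definition \ref{def:combined k-join}; everything else is a direct bookkeeping argument using the formula $m_{\ell,k}=\ell-k+p_{\ell,k}$.
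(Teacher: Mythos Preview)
Your proposal is correct and follows essentially the same route as the paper's proof---both reduce the statement to $p_{\ell,k}=0$ via the identity $m_{\ell,k}=\ell-k+p_{\ell,k}$, then argue that $p(G)=0$ forces $G_i=N_{a+1}$ and conversely construct such a $G$ explicitly; you simply fill in more detail, particularly for the $k=1$ case which the paper leaves implicit. One small remark: for membership in $\cA_{\ell,1}$ you actually need the \emph{strict} inequality $\alpha(G')>|V(G')|-|V(G_1)|$ (not merely the $\geq$ of Definition~\ref{def:combined k-join}), but this holds since $m>m-\ell$ whenever $\ell\geq 2$.
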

\begin{proof}
Note that $k\mid\ell$ if and only if $k\mid\ell-k$. Assume that $\ell-k=ak$ for some $a\geq 1$. By choosing $G_i=N_{a+1}$ for $i=1\dots,k$, we have $p(G)=0$. Hence, $p_{\ell,k}=0$ and $m_{\ell,k}=\ell-k$. Conversely, if $m_{\ell,k}=\ell-k$, then $p_{\ell,k}=0$ and so each $G_i$ must consist of isolated vertices. Since $|V(G_1)|=\cdots=|V(G_k)|\geq 2$, it follows that there is $a\geq 1$ such that $\ell-k=ak$. Furthermore, $G_i=N_{a+1}$ for $i=1,\dots,k$.
\end{proof}

We shall consider an example to see that $p(G)$ depends on how $G_1,\dots,G_k$ consist of isolated vertices.
\begin{example}{\rm{
	Let $G\in\cA_{12,5}$, and let $G_1,\dots,G_5$ be the elementary graphs of $G$. Note that for $i=1,\dots,5$, $G_i$ has at least one isolated vertex. Consider the following configurations of three distributions of $12$ isolated vertices in $G_1,\dots,G_5$:
\[
\begin{blockarray}{ccccc}
\begin{block}{|c|c|c|c|c|}
&&&&\\
\bullet & \bullet&&&\\
\bullet& \bullet&\bullet&\bullet& \bullet\\
\bullet& \bullet&\bullet&\bullet& \bullet\\\hline
\end{block}
G_1 & G_2 & G_3 & G_4 & G_5 \\
\noalign{\vspace{1.5ex}}
\BAmulticolumn{5}{c}{%
{\text{Case 1}}
}
\\
\end{blockarray}\;\;\;\;
\begin{blockarray}{ccccc}
\begin{block}{|c|c|c|c|c|}
&&&&\\
\bullet & \bullet&\bullet&&\\
\bullet& \bullet&\bullet&\bullet& \\
\bullet& \bullet&\bullet&\bullet& \bullet\\\hline
\end{block}
G_1 & G_2 & G_3 & G_4 & G_5 \\
\noalign{\vspace{1.5ex}}
\BAmulticolumn{5}{c}{%
	{\text{Case 2}}
}
\\
\end{blockarray}\;\;\;\;
\begin{blockarray}{ccccc}
\begin{block}{|c|c|c|c|c|}
&&&&\\
\bullet&\bullet&&&\\
\bullet & \bullet&&&\\
\bullet& \bullet&\bullet&&\\
\bullet& \bullet&\bullet&\bullet& \bullet\\\hline
\end{block}
G_1 & G_2 & G_3 & G_4 & G_5 \\
\noalign{\vspace{1.5ex}}
\BAmulticolumn{5}{c}{%
	{\text{Case 3}}
}
\\
\end{blockarray}
\vspace{-3mm}
\]
	where for each case, a $\bullet$ indicates an isolated vertex, and the $j^\text{th}$ column describes how many isolated vertices $G_j$ has. Note that for each case, there are no more isolated vertices in $G_j$; $G_j$ may have disconnected components of order more than $1$ under the condition that $|V(G_1)|=\cdots=|V(G_5)|\geq 2$.
	
	Consider Case 1. If $|V(G_i)|=3$ for $i=1,\dots,5$, then $G_3,G_4$ and $G_5$ must have three isolated vertices, a contradiction to $\ell=12$. In order for $G$ to satisfy the condition that it only has $12$ isolated vertices and $|V(G_1)|=\cdots=|V(G_5)|\geq 2$, at least one component of order more than $1$ must be added to each $G_j$. Thus, $p(G)\geq 5$ for Case 1. 
	
	Using the same argument for Case 2, it follows that we also need at least five  components of order more than $1$. Hence, $p(G)\geq 5$ for Case 2. 
	
	For Case 3, we minimally need three components: $K_2$, $K_3$ and $K_3$ in $G_3$, $G_4$ and $G_5$, respectively. Thus, $|V(G_1)|=\cdots=|V(G_5)|\geq 4$ and $p(G)\geq 3$.}}
\end{example}

Let $G\in\cA_{\ell,k}$ where $\ell-k\geq 1$. Suppose that $G_1,\dots,G_k$ are the elementary graphs of $G$, and $v_i$ is an isolated vertex in $G_i$ for $i=1,\dots,k$. Let $c_i(G)\geq 0$ be the number of isolated vertices in $G_i-v_i$ so that $\ell-k=\sum_{i=1}^{k}c_i(G)$. Suppose that $c_{\mathrm{max}}(G):=\mathrm{max}\{c_1(G),\dots,c_k(G)\}$ and $q(G):=\left|\{i|c_i(G)=c_{\mathrm{max}}(G) \text{ for } 1\leq i\leq k\}\right|$. Since $\ell-k\geq 1$, we have $c_{\mathrm{max}}(G),q(G)\geq 1$. If $G$ is clear from the context, then $c_i(G)$ and $c_{\mathrm{max}}(G)$ can be written as $c_i$ and $c_{\mathrm{max}}$, respectively. Assume that there is a $G_j-v_j$ such that $c_{\mathrm{max}}-c_j=1$. Since $|V(G_1)|=\cdots=|V(G_k)|$ and there are only $\ell-k$ isolated vertices in the disjoint union of $G_1-v_i,\dots,G_k-v_k$, there must be at least one component of order more than $1$ in each $G_i$. Thus, $p(G)\geq k$. Furthermore, choosing $G_j=N_{c_j+1}+K_{s-c_j-1}$ for $j=1,\dots,k$ where $s\geq c_{\mathrm{max}}+3$, we have $|V(G_1)|=\cdots=|V(G_k)|=s$ and so $p(G)=k$. On the other hand, suppose that $c_{\mathrm{max}}-c_j\neq1$ for all $1\leq j\leq k$. Choosing
\begin{align*}
G_j=\begin{cases}
N_{c_j+1}+K_{c_\mathrm{max}-c_j}, & \text{if $c_{\mathrm{max}}-c_j\geq 2$},\\
N_{c_\mathrm{max}+1}, &\text{if $c_j=c_{\mathrm{max}}$},
\end{cases}
\end{align*}
for $1\leq j\leq k$, we obtain $|V(G_1)|=\cdots=|V(G_k)|\geq 2$ and so $p(G)=k-q(G)$ where $q(G)\geq 1$.

Let $\cG_{\ell,k}$ be the set of graphs $G\in\cA_{\ell,k}$ such that for the elementary graphs $G_1,\dots,G_k$, $c_{\mathrm{max}}-c_j\neq1$ for all $1\leq j\leq k$, where $\ell-k\geq 1$. Then, we immediately have the following proposition.
\begin{proposition}\label{cG}
	Suppose that $G\in\cA_{\ell,k}$ where $\ell-k\geq 1$. If $G\in\cG_{\ell,k}$, then $p(G)\geq k-q(G)$ where $q(G)\geq 1$, and there exists a graph $H\in\cG_{\ell,k}$ such that $p(H)= k-q(G)$ where $q(G)\geq 1$. If $G\notin\cG_{\ell,k}$, then $p(G)\geq k$ and there exists a graph $H\in\cA_{\ell,k}$ such that $p(H)=k$.
\end{proposition}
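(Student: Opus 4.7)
The plan is to formalise the observations in the paragraph immediately preceding the proposition. Set $s:=|V(G_1)|=\cdots=|V(G_k)|$. Because $G_i$ contains exactly $c_i+1$ isolated vertices (namely $v_i$ together with the $c_i$ others counted in $G_i-v_i$), precisely $s-c_i-1$ vertices of $G_i$ lie in components of order at least $2$. The key observation is the trichotomy $s-c_i-1\in\{0\}\cup\{2,3,\ldots\}$: the value $1$ is forbidden, since an additional lone vertex would itself be isolated and would contradict the count $c_i$. Consequently $G_i$ contributes $0$ to $p(G)$ when $s=c_i+1$ and at least $1$ otherwise.

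For $G\in\cG_{\ell,k}$ I would split on the size of $s$. If $s=c_{\max}+1$, then the $q(G)$ indices with $c_i=c_{\max}$ contribute $0$, while every other index $i$ has $s-c_i-1=c_{\max}-c_i$; the defining property of $\cG_{\ell,k}$ forces $c_{\max}-c_i\neq 1$, hence $c_{\max}-c_i\geq 2$, so $G_i$ contributes at least $1$. This yields $p(G)\geq k-q(G)$. If instead $s\geq c_{\max}+2$, every index satisfies $s-c_i-1\geq 1$, hence $\geq 2$ by the trichotomy, so $p(G)\geq k\geq k-q(G)$. Equality is realised by the construction $H=\vee_{i=1}^{k}H_i$ with $H_i=N_{c_{\max}+1}$ when $c_i=c_{\max}$ and $H_i=N_{c_i+1}+K_{c_{\max}-c_i}$ otherwise; the second definition is valid precisely because $c_{\max}-c_i\geq 2$, and by construction the $c_i$-profile of $H$ coincides with that of $G$, so $H\in\cG_{\ell,k}$ and $p(H)=k-q(G)$.

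For $G\notin\cG_{\ell,k}$ I would pick an index $j$ with $c_j=c_{\max}-1$. The equality $s=c_{\max}+1$ would give $s-c_j-1=1$, which the trichotomy forbids, so $s\geq c_{\max}+2$. Hence every $G_i$ has $s-c_i-1\geq 1$, therefore $\geq 2$, and contributes at least one non-trivial component; so $p(G)\geq k$. A matching example is $H=\vee_{i=1}^{k}H_i$ with $s=c_{\max}+3$ and $H_i=N_{c_i+1}+K_{s-c_i-1}$: each factor has exactly one non-trivial component, giving $p(H)=k$.

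The only technical point to verify, more than a real obstacle, is that the constructed $H$ genuinely lies in $\cA_{\ell,k}$ (and in $\cG_{\ell,k}$ in the first case). In each construction the factors $H_i$ have common order $s\geq 2$ and each contains an isolated vertex, so $H$ is an elementary $k$-join by Definition \ref{def:elemetary k-join}; the total number of isolated vertices equals $\sum_{i=1}^{k}(c_i+1)=\ell$, and all such vertices share the common degree $(k-1)s$ while every non-isolated vertex of $H_i$ has strictly larger degree in $H$, so $\delta(H)=(k-1)s$ is attained exactly $\ell$ times. Finally $i(H)=1$ follows from Theorem \ref{equiv:i(G)=1}, so $H\in\cA_{\ell,k}$ as required.
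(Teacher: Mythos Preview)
Your argument is correct and mirrors the paper's own reasoning: the proposition is stated there as an immediate consequence of the paragraph preceding it, and you have simply written that paragraph out carefully, making the trichotomy $s-c_i-1\in\{0\}\cup\{2,3,\ldots\}$ explicit and supplying the lower bound $p(G)\ge k-q(G)$ for $G\in\cG_{\ell,k}$ that the paper leaves to the reader. One small caveat (which the paper's discussion shares): your witness $H=\vee_{i=1}^k H_i$ is an elementary $k$-join only when $k\ge 2$; for $k=1$ one must instead realise $H$ as a combined $1$-join, e.g.\ $N_\ell\vee K_m$, but this is a trivial adjustment.
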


Proposition \ref{cG} implies that if $\cG_{\ell,k}$ is non--empty, then $p_{\ell,k}<k$. Otherwise, $p_{\ell,k}=k$, and so $m_{\ell,k}=\ell$.

\begin{lemma}[Case (ii)]\label{lemma2:glb}
	Let $G\in\cA_{\ell,k}$. If $k=\ell$ or $k=\ell-1\geq 2$, then $m_{\ell,k}=\ell$.
\end{lemma}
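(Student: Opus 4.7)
The plan is to use the identity $m_{\ell,k}=\ell-k+p_{\ell,k}$ derived just before Lemma \ref{lemma1:glb}, so it suffices to show that $p_{\ell,k}=k$ in both subcases. The lower bound will come from a structural argument on how the $\ell$ isolated vertices must be distributed among the $k$ elementary graphs, and the upper bound from an explicit construction in $\cA_{\ell,k}$ attaining $p(G)=k$.

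First I would handle the case $k=\ell$. Let $G\in\cA_{\ell,\ell}$ with elementary graphs $G_1,\dots,G_\ell$. Each $G_i$ must contain at least one isolated vertex (condition \eqref{c2} of Definition \ref{def:elemetary k-join}), so altogether the $G_i$'s contain at least $\ell$ isolated vertices. Since $G$ has exactly $\ell$ vertices of degree $\delta(G)$, each $G_i$ contains exactly one isolated vertex. Because $|V(G_i)|\geq 2$, the remaining $|V(G_i)|-1\geq 1$ vertices of $G_i$ must lie in components of order at least $2$, so $G_i$ contributes at least one component of order more than $1$ to $p(G)$. Summing, $p(G)\geq k=\ell$. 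For attainability, set $s\geq 3$ and take $G_i=N_1+K_{s-1}$ for $i=1,\dots,\ell$; this yields $p(G)=\ell$.

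Next I would handle the case $k=\ell-1\geq 2$, which uses the notation $c_i(G)$ and $\cG_{\ell,k}$ set up immediately before Proposition \ref{cG}. Let $G\in\cA_{\ell,\ell-1}$ with elementary graphs $G_1,\dots,G_{\ell-1}$ and isolated vertices $v_1,\dots,v_{\ell-1}$. Then $\sum_{i=1}^{\ell-1}c_i(G)=\ell-k=1$, so exactly one index, say $j_0$, satisfies $c_{j_0}(G)=1$ and all others satisfy $c_j(G)=0$. In particular, $c_{\mathrm{max}}(G)=1$ and $c_{\mathrm{max}}(G)-c_j(G)=1$ for every $j\neq j_0$, so $G\notin\cG_{\ell,k}$. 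By Proposition \ref{cG} this forces $p(G)\geq k$, and hence $p_{\ell,k}\geq k$. For the matching construction, pick $s\geq 3$ and let $G_{j_0}=N_2+K_{s-2}$ and $G_j=N_1+K_{s-1}$ for $j\neq j_0$; these graphs are elementary with equal orders, $G=\vee_{j=1}^{k}G_j$ lies in $\cA_{\ell,k}$, and $p(G)=k$.

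Combining the two subcases, $p_{\ell,k}=k$, and therefore $m_{\ell,k}=\ell-k+k=\ell$ in both situations. The only slightly subtle point is ensuring in the $k=\ell-1$ subcase that the single extra isolated vertex forces some index to satisfy $c_{\mathrm{max}}-c_j=1$, thereby triggering the $p(G)\geq k$ branch of Proposition \ref{cG}; once that observation is made, the rest is bookkeeping.
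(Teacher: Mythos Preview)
Your argument is correct and follows essentially the same route as the paper: handle $k=\ell$ by the direct observation that each $G_i$ has exactly one isolated vertex and $|V(G_i)|\geq 2$, and handle $k=\ell-1$ by showing that the single surplus isolated vertex forces some $c_{\mathrm{max}}-c_j=1$, so $\cG_{\ell,\ell-1}=\emptyset$ and Proposition~\ref{cG} applies.

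One small slip: in your construction for $k=\ell-1$ you take $s\geq 3$, but for $s=3$ the graph $G_{j_0}=N_2+K_{s-2}=N_2+K_1=N_3$ has three isolated vertices rather than two, so the total count of minimum--degree vertices becomes $\ell+1$. You need $s\geq 4$. Alternatively, you can drop the explicit construction altogether: Proposition~\ref{cG} already asserts the existence of $H\in\cA_{\ell,k}$ with $p(H)=k$ whenever $G\notin\cG_{\ell,k}$, which is exactly what the paper invokes.
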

\begin{proof}
Let $G_1,\dots,G_k$ be the elementary graphs of $G$. Suppose that $k=\ell$. Note that $|V(G_i)|\geq 2$ for $i=1,\dots,k$. Since each $G_i$ for $i=1,\dots,k$ has exactly one isolated vertex, every $G_i$ must have at least one component of order more than $1$. Thus, $p_{\ell,\ell}=k$, and so $m_{\ell\ell}=\ell$. If $k=\ell-1\geq 2$, there exists a graph $G_j$ for some $1\leq j\leq k$ such that $c_{\mathrm{max}}-c_j=1$. So, $\cG_{\ell,k}$ is the empty set, which implies that $m_{\ell,\ell-1}=\ell$.
\end{proof}

\begin{example}\label{eg:max q} {\rm{
	Let $G\in\cA_{16,5}$, and let $G_1,\dots,G_5$ be the elementary graphs of $G$. Note that each $G_i$ for $i=1,\dots,5$ has at least one isolated vertex. See the following configurations of two distributions of the $16$ vertices into $G_1,\dots,G_5$:
	\[
	\begin{blockarray}{ccccc}
	\begin{block}{|c|c|c|c|c|}
	\bullet&\bullet&&&\\
	\bullet&\bullet&&&\\
	\bullet & \bullet&\bullet&&\\
	\bullet& \bullet&\bullet&\bullet& \\
	\bullet& \bullet&\bullet&\bullet& \bullet\\\hline
	\end{block}
	G_1 & G_2 & G_3 & G_4 & G_5 \\
	\noalign{\vspace{1.5ex}}
	\BAmulticolumn{5}{c}{%
		{\text{Case 1}}
	}
	\end{blockarray}
	\;\;\;\;
	\begin{blockarray}{ccccc}
	\begin{block}{|c|c|c|c|c|}
	\bullet&\bullet&\bullet&&\\
	\bullet & \bullet&\bullet&\bullet&\\
	\bullet& \bullet&\bullet&\bullet& \\
	\bullet& \bullet&\bullet&\bullet& \bullet\\\hline
	\end{block}
	G_1 & G_2 & G_3 & G_4 & G_5 \\
	\noalign{\vspace{1.5ex}}
	\BAmulticolumn{5}{c}{%
		{\text{Case 2}}
	}
	\end{blockarray}
	\;\;\;\;
	\begin{blockarray}{ccccc}
	\begin{block}{|c|c|c|c|c|}
	\bullet&\bullet&\bullet&&\\
	\bullet & \bullet&\bullet&&\\
	\bullet& \bullet&\bullet&\bullet&\bullet \\
	\bullet& \bullet&\bullet&\bullet& \bullet\\\hline
	\end{block}
	G_1 & G_2 & G_3 & G_4 & G_5 \\
	\noalign{\vspace{1.5ex}}
	\BAmulticolumn{5}{c}{%
		{\text{Case 3}}
	}
\end{blockarray}
	\]
	where for each case, a $\bullet$ indicates an isolated vertex and the $j^\text{th}$ column describes how many isolated vertices $G_j$ has. For Case 1, $G\in \cG_{16,5}$ and by Proposition \ref{cG}, we may have $p(G)=3$. Suppose that $G$ corresponds to the configuration of Case 2. Since $c_{\mathrm{max}}-c_4=1$, $G\notin\cG_{\ell,k}$ and so $p(G)\geq 5$. If $G$ corresponds to Case 3, then $c_{\mathrm{max}}-c_j\neq1$ for all $1\leq j\leq 5$ so that we can obtain $p(G)=2$ by placing $K_2$ in $G_4$ and $G_5$, respectively. Furthermore, there is no graph in $G\in\cG_{16,5}$ such that $c_\mathrm{max}=2$, by the pigeonhole principle. Therefore, $p_{16,5}=2$ and so $m_{16,5}=13$.
	
	Let $H\in\cA_{15,4}$, and let $H_1,\dots,H_4$ be the elementary graphs of $H$. Consider the following configurations of two distributions of the $15$ vertices into $H_1,\dots,H_4$:
	\[
	\begin{blockarray}{cccc}
	\begin{block}{|c|c|c|c|}
	\bullet&\bullet&\bullet&\\
	\bullet & \bullet&\bullet&\bullet\\
	\bullet& \bullet&\bullet&\bullet \\
	\bullet& \bullet&\bullet&\bullet\\\hline
	\end{block}
	H_1 & H_2 & H_3 & H_4 \\
	\noalign{\vspace{1.5ex}}
	\BAmulticolumn{4}{c}{%
		{\text{Case 4}}
	}
	\end{blockarray}
	\;\;\;\;
	\begin{blockarray}{cccc}
	\begin{block}{|c|c|c|c|}
	\bullet&\bullet& &\\
	\bullet&\bullet&&\\
	\bullet & \bullet&\bullet&\\
	\bullet& \bullet&\bullet&\bullet \\
	\bullet& \bullet&\bullet&\bullet\\\hline
	\end{block}
	H_1 & H_2 & H_3 & H_4 \\
	\noalign{\vspace{1.5ex}}
	\BAmulticolumn{4}{c}{%
		{\text{Case 5}}
	}
	\end{blockarray}
	\]
	For Case 4, $H\notin \cG_{15,4}$, so $p(H)\geq 4$. For Case 5, we have $p(H)\geq 2$. One can check that $m_{15,4}=13$.}}
\end{example}

Observe from Cases 1, 2 and 3 in Example \ref{eg:max q} that $c_{\mathrm{max}}(G)$ should be minimized in order to maximize $q(G)$ so that $p_{\ell,k}$ can be attained. So, we shall consider graphs $G\in\cA_{\ell,k}$ such that $0\leq\ell-k-c_{\mathrm{max}}(G)q(G)\leq c_{\mathrm{max}}(G)-1$, and then investigate the minimum of $c_{\mathrm{max}}(G)$ among the graphs $G$. However, Cases 4 and 5 in Example \ref{eg:max q} show that the minimum of $c_{\mathrm{max}}(G)$ being attained at $\hat{G}$ does not guarantee  attaining $p_{\ell,k}$ if $\ell-k=c_{\mathrm{max}}(\hat{G})q(\hat{G})-1$.

\begin{lemma}[Case (iii)]\label{lemma3:glb}
	Let $G\in\cA_{\ell,k}$ where $k\nmid \ell$ and $2\leq k\leq \ell-2$. Let $\tilde{c}=\mathrm{max}\{\left\lceil\frac{\ell-k}{k}\right\rceil,2\}$. Then, 
	\begin{equation*}
	m_{\ell,k}=\begin{cases}
	\ell-\lfloor\frac{\ell-k}{3}\rfloor, & \text{if $\ell-k$ is odd, and $\lfloor\frac{\ell-k}{2}\rfloor\leq k-1$},\\
		\ell-\lfloor\frac{k(\ell-k)}{\ell+k+1}\rfloor, & \text{if $k\mid (\ell+1)$, and $\ell+1\geq 4k$},\\
	\ell-\lfloor\frac{\ell-k}{\tilde{c}}\rfloor, & \text{otherwise}.
	\end{cases}
	\end{equation*}
\end{lemma}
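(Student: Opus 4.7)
The plan is to reformulate $m_{\ell,k}$ as an integer optimization problem and then solve it by case analysis on the smallest feasible value of $c_{\max}$. Under the hypothesis $k \nmid \ell$ and $2 \leq k \leq \ell - 2$, I would first verify that $\cG_{\ell,k}$ is nonempty; for instance the tuple $(c_1, \ldots, c_k) = (\ell-k, 0, \ldots, 0)$ is admissible because $\ell - k \geq 2$, so $c_{\max} - c_j = \ell - k \neq 1$ for every $j$. Proposition \ref{cG} then yields $p_{\ell,k} = k - q^*$, where
\begin{align*}
q^* := \max\{q(G) : G \in \cG_{\ell,k}\},
\end{align*}
so $m_{\ell,k} = \ell - k + p_{\ell,k} = \ell - q^*$ and the lemma reduces to computing $q^*$.

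A tuple $(c_1, \ldots, c_k) \in \mathbb{Z}_{\geq 0}^k$ realizing a graph in $\cG_{\ell,k}$ satisfies $\sum_i c_i = \ell - k$ and $c_j \neq c_{\max} - 1$ for every $j$. Fixing the maximum value $c_{\max} = c \geq 2$ (the value $c = 1$ would force all $c_j = 1$ and hence $k \mid \ell$, against the hypothesis), the largest $q \in \{1, \ldots, k\}$ of indices at value $c$ is the greatest integer satisfying
\begin{align*}
qc \leq \ell - k \quad\text{and}\quad \ell - k - qc \leq (k-q)(c-2),
\end{align*}
because the remaining $k - q$ parts must sum to $\ell - k - qc$ with each value in $\{0, 1, \ldots, c-2\}$. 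Maximizing $q$ over all $c \geq 2$ gives $q^*$, and correspondingly the formulas for $m_{\ell,k}$.

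The three sub-cases correspond to the smallest $c$ that admits a near-maximal $q$. In the first sub-case, $\ell - k$ is odd so $c = 2$ is infeasible by parity, and $c = 3$ is the smallest workable value: I would attain $q = \lfloor (\ell-k)/3 \rfloor$ by taking $q$ threes and distributing the residue $r \in \{0,1,2\}$ as ones and zeros among the remaining $k - q$ parts, decrementing $q$ by one when $r = 2$ to avoid the forbidden value, where the side condition $\lfloor (\ell-k)/2 \rfloor \leq k - 1$ guarantees enough room. In the third (``generic'') sub-case, $c = \tilde c$ is itself feasible and the construction $G_j = N_{c_j + 1} + K_{\tilde c - c_j}$ (and $G_j = N_{\tilde c + 1}$ when $c_j = \tilde c$) realizes $q = \lfloor (\ell-k)/\tilde c \rfloor$. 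In the second sub-case, $k \mid (\ell + 1)$ forces the natural choice $c = \tilde c$ to produce exactly one leftover part of value $\tilde c - 1$, which is forbidden, so $c_{\max}$ must be raised; combining the two feasibility inequalities yields the quadratic constraint $2q^2 - q(k+\ell) + k(\ell-k) \geq 0$, whose discriminant is the perfect square $(3k-\ell)^2$, and the largest feasible integer $q$ then simplifies to $\lfloor k(\ell-k)/(\ell+k+1) \rfloor$ under the additional bound $\ell + 1 \geq 4k$.

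The main obstacle is the second sub-case: I must show that the natural choice $c = \tilde c$ is indeed infeasible, that the enlarged $c$ achieves the claimed $q$, and that no larger $q$ can be attained for any $c \geq 2$. The arithmetic is most delicate here because one needs to simultaneously track when the residue $\ell - k - q\tilde c$ coincides with $\tilde c - 1$ and verify the passage from the quadratic feasibility bound to the closed-form floor expression. In each sub-case the argument has two halves: a construction attaining the claimed $q^*$ (lower bound) and a matching infeasibility argument using the two inequalities above (upper bound). Together they yield the stated formulas $m_{\ell,k} = \ell - q^*$.
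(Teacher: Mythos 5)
Your reduction of the lemma to computing $q^*=\max\{q(G):G\in\cG_{\ell,k}\}$, so that $m_{\ell,k}=\ell-q^*$ via Proposition \ref{cG}, is correct and is essentially the route the paper takes, as is your feasibility criterion for a pair $(c,q)$: one needs $qc\le\ell-k$ and $\ell-k-qc\le(k-q)(c-2)$ because the non-maximal parts must lie in $\{0,\dots,c-2\}$. Your first and third sub-cases match the paper's argument, up to one small slip: when $c=3$ and the residue is $2$ you should split it as $1+1$ over two of the remaining parts (possible since $\lfloor\frac{\ell-k}{3}\rfloor\le k-2$ under the case hypothesis) rather than decrement $q$, which would only certify $q^*\ge\lfloor\frac{\ell-k}{3}\rfloor-1$ and miss the claimed value by one.

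The second sub-case, which you yourself flag as the delicate one, does not go through as described. Eliminating $c$ from the two feasibility inequalities gives $c\ge\frac{\ell+k-2q}{k}$ and $c\le\frac{\ell-k}{q}$, hence your quadratic $2q^2-(k+\ell)q+k(\ell-k)\ge0$; but this factors as $2(q-k)\left(q-\frac{\ell-k}{2}\right)\ge0$, and under $\ell+1\ge 4k$ (so $\ell>3k$) we have $k<\frac{\ell-k}{2}$, so the inequality holds for \emph{every} $q\le k$. The continuous relaxation is vacuous and cannot yield any upper bound on $q$. The binding constraint is the integrality of $c$: writing $\ell+1=ak$, the value $c=c_0:=a-1$ is entirely infeasible (the second inequality reduces to $2(k-q)\le 1$), so one is forced to $c\ge a=\frac{\ell+1}{k}$ and obtains $q^*=\lfloor\frac{\ell-k}{a}\rfloor=\lfloor\frac{k(\ell-k)}{\ell+1}\rfloor$. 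This is \emph{not} equal to the target $\lfloor\frac{k(\ell-k)}{\ell+k+1}\rfloor$ in general: for $(\ell,k)=(11,3)$ the tuple $(c_1,c_2,c_3)=(4,4,0)$, realized by $G=N_5\vee N_5\vee(N_1+K_4)\in\cG_{11,3}$, has $q=2$ and $am(\alpha(G))=9$, whereas the stated formula predicts $m_{11,3}=10$. So the discriminant computation cannot be repaired to produce the formula as written; a correct execution of your (and the paper's) construction lands on $\ell-\lfloor\frac{k(\ell-k)}{\ell+1}\rfloor$, and the gap in your argument coincides with an error in the paper's own simplification $\lfloor\frac{\ell-k}{c_0+1}\rfloor=\lfloor\frac{k(\ell-k)}{\ell+k+1}\rfloor$ rather than with your overall strategy.
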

\begin{proof}
	Let us consider a graph $G\in\cA_{\ell,k}$. Then, there exist the elementary graphs $G_1,\dots,G_k$ of $G$. Suppose that $0\leq\ell-k-c_{\mathrm{max}}(G)q(G)\leq c_{\mathrm{max}}(G)-1$ where $k\nmid \ell$ and $2\leq k\leq \ell-2$. We may assume that $c_1=\cdots=c_{q(G)}=c_\mathrm{max}(G)$ and $c_{q(G)+1}=r(G)$ where $r(G)=\ell-k-c_{\mathrm{max}}(G)q(G)$. Note that if $0\leq r(G)\leq c_{\mathrm{max}}(G)-2$, then $G\in\cG_{\ell,k}$. 
	
	Let $c_0=\mathrm{min}\{c\geq2|\lfloor\frac{\ell-k}{c}\rfloor\leq k-1\}$ and $r_0=\ell-k-c_0\lfloor\frac{\ell-k}{c_0}\rfloor$. We shall consider $3$ cases: (a) $c_0=2$ and $r_0=1$, (b) $\lfloor\frac{\ell-k}{c_0}\rfloor=k-1$ and $r_0=c_0-1$ where $c_0\geq 3$, (c) neither (a) nor (b) holds.
	\begin{itemize}
		\item (case (a)) If $c_\mathrm{max}(G)=2$ and $r(G)=1$, then $c_{\mathrm{max}}(G)-c_{q(G)+1}=1$ so that $p(G)\geq k$. Suppose that $c_{\mathrm{max}}(G)=3$. Since $c_0=2$ and $r_0=1$, $\lfloor\frac{\ell-k}{2}\rfloor\leq k-1$ implies that $\lfloor\frac{\ell-k}{3}\rfloor\leq k-2$. If $r(G)=0$ or $r(G)=1$, then $G\in\cG_{\ell,k}$ and by Proposition \ref{cG}, $p_{\ell,k}=k-\lfloor\frac{\ell-k}{3}\rfloor$. Assume that $r(G)=2$. Since $\lfloor\frac{\ell-k}{3}\rfloor\leq k-2$, there exists a graph $\hat{G}\in\cG_{\ell,k}$ such that $c_1(\hat{G})=\cdots=c_{q(G)}(\hat{G})=3$ and $c_{k-1}(\hat{G})=c_k(\hat{G})=1$. By Proposition \ref{cG}, we find that $m_{\ell,k}=\ell-\lfloor\frac{\ell-k}{3}\rfloor$. Furthermore, considering $c_0=2$, the condition $r_0=1$ is equivalent for $\ell-k$ to be odd.
		
		\item (case (b)) If $c_\mathrm{max}(G)=c_0\geq 3$, $q(G)=k-1$ and $r(G)=c_0-1$, then $c_\mathrm{max}-c_k=1$ so that $G\notin\cG_{\ell,k}$. Note that $\ell-k=c_0(k-1)+c_0-1$ can be expressed as $c_0=\frac{\ell+1}{k}-1\geq 3$, \textit{i.e.}, $\ell+1$ is divisible by $k$ and $\ell+1\geq 4k$. Suppose that $c_\mathrm{max}(G)=c_0+1$. We have $q(G)=\lfloor\frac{\ell-k}{c_0+1}\rfloor=\lfloor\frac{k(\ell-k)}{\ell+k+1}\rfloor$. Since $\lfloor\frac{\ell-k}{c_0}\rfloor=k-1$, we have $q(G)\leq k-2$. If $r(G)=0$, there exists $\hat{G}\in\cG_{\ell,k}$ such that $c_1(\hat{G})=\cdots=c_{q(G)}(\hat{G})=c_0+1$. If $r(G)\geq 1$, choose a graph $\hat{G}\in\cG_{\ell,k}$ such that $c_1(\hat{G})=\cdots=c_{q(G)}(\hat{G})=c_0+1$, $c_{k-1}(\hat{G})=r(G)-1$ and $c_{k}(\hat{G})=1$. Hence, by Proposition \ref{cG}, $m_{\ell,k}=\ell-\lfloor\frac{k(\ell-k)}{\ell+k+1}\rfloor$.
		
		\item (case (c)) Considering the cases (a) and (b), if $c_0=2$, then $r_0=0$; if $r_0=c_0-1$, then $\lfloor\frac{\ell-k}{c_0}\rfloor\leq k-2$. Let $c_\mathrm{max}(G)=c_0$ and $q(G)=\lfloor\frac{\ell-k}{c_0}\rfloor$. It is readily checked that for $c_0=2$ we can obtain our desired result. If $r(G)=c_0-1\geq 2$, then $q(G)\leq k-2$. Then, there exists a graph $\hat{G}\in\cG_{\ell,k}$ such that $c_1(\hat{G})=\cdots=c_{q(G)}(\hat{G})=c_0$, $c_{k-1}(\hat{G})=r(G)-1$ and $c_{k}(\hat{G})=1$. If $r(G)<c_0-1$, it is straightforward that $G\in\cG_{\ell,k}$. Therefore, $m_{\ell,k}=\ell-\lfloor\frac{\ell-k}{c_0}\rfloor$. Consider $c_0=\mathrm{min}\{c\geq2|\lfloor\frac{\ell-k}{c}\rfloor\leq k-1\}$. Since $\left\lfloor\frac{\ell-k}{c}\right\rfloor\leq k-1\Leftrightarrow\frac{\ell-k}{c}<k\Leftrightarrow \frac{\ell-k}{k}<c$, we have $c_0=\mathrm{max}\{\left\lceil\frac{\ell-k}{k}\right\rceil,2\}$.
	\end{itemize} 
\end{proof}

Summarizing Lemmas \ref{lemma1:glb}, \ref{lemma2:glb} and \ref{lemma3:glb}, we have the following theorem.

\begin{theorem}\label{greatestlowerbounds}
	Let $G\in\cA_{\ell,k}$ where $1\leq k\leq \ell$. Then,
	\begin{numcases}{m_{\ell,k}=}\label{eq1:glb}
	\ell, & \text{if $k=\ell-1\geq 2$ or $k=\ell$},\\\label{eq2:glb}
	\ell-k, & \text{if $k\mid \ell$ and $1\leq k< \ell$},\\\label{eq3:glb}
	\ell-\left\lfloor\frac{k(\ell-k)}{\ell+k+1}\right\rfloor, & \text{if $k\mid (\ell+1)$, $\ell+1\geq 4k$, $2\leq k\leq \ell-2$},\\\label{eq4:glb}
	\ell-\left\lfloor\frac{\ell-k}{3}\right\rfloor, & \text{if $k\nmid \ell$, $2\nmid (\ell-k)$, $\lfloor\frac{\ell-k}{2}\rfloor\leq k-1\leq\ell-3$},\\\label{eq5:glb}
	\ell-\left\lfloor\frac{\ell-k}{\tilde{c}}\right\rfloor, & \text{otherwise},
	\end{numcases}
	where $\tilde{c}=\mathrm{max}\{\left\lceil\frac{\ell-k}{k}\right\rceil,2\}$.
\end{theorem}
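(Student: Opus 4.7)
The plan is to assemble Theorem \ref{greatestlowerbounds} as a case analysis that invokes the three preceding lemmas. First I would partition the admissible range $1 \leq k \leq \ell$ into three mutually exclusive regimes: (A) $k \in \{\ell,\ell-1\}$, (B) $k \mid \ell$ with $1 \leq k < \ell$, and (C) $k \nmid \ell$ with $2 \leq k \leq \ell - 2$. These are disjoint and exhaust every pair $(\ell,k)$ with $1 \leq k \leq \ell$. Regime (A) is handled by Lemma \ref{lemma2:glb} and yields (\ref{eq1:glb}); regime (B) is Lemma \ref{lemma1:glb} and yields (\ref{eq2:glb}); regime (C) is exactly the hypothesis of Lemma \ref{lemma3:glb}.

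The remaining work is to translate the three sub-cases (a), (b), (c) from the proof of Lemma \ref{lemma3:glb} into the three cases (\ref{eq3:glb}), (\ref{eq4:glb}), (\ref{eq5:glb}) of the theorem. Using the quantities $c_0 = \min\{c \geq 2 : \lfloor (\ell-k)/c\rfloor \leq k-1\}$ and $r_0 = \ell-k - c_0\lfloor(\ell-k)/c_0\rfloor$ from that proof, the condition $c_0 = 2$ is equivalent to $\lfloor (\ell-k)/2\rfloor \leq k-1$, and when $c_0=2$ the condition $r_0=1$ is equivalent to $\ell-k$ being odd; this is sub-case (a) and produces (\ref{eq4:glb}). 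For sub-case (b), the defining relation $\ell-k = c_0(k-1) + (c_0-1)$ rearranges to $c_0 = (\ell+1)/k - 1$, so the hypotheses $c_0 \geq 3$ and $\lfloor(\ell-k)/c_0\rfloor = k-1$ are equivalent to $k \mid (\ell+1)$ together with $\ell+1 \geq 4k$, producing (\ref{eq3:glb}). The residual sub-case (c) then supplies (\ref{eq5:glb}), with $\tilde{c} = c_0 = \max\{\lceil(\ell-k)/k\rceil,2\}$ as justified at the end of the proof of Lemma \ref{lemma3:glb}.

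The only real obstacle is bookkeeping: one must verify that sub-cases (a) and (b) are genuinely mutually exclusive (they are, since (a) forces $c_0 = 2$ while (b) forces $c_0 \geq 3$), and that together with (c) they exhaust regime (C). Once these two checks are in place, the theorem follows simply by concatenating the conclusions of Lemmas \ref{lemma1:glb}, \ref{lemma2:glb}, and \ref{lemma3:glb} in the appropriate order, so the proof is essentially a summary of the three lemmas rather than a new argument.
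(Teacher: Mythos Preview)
Your proposal is correct and mirrors the paper's own proof, which consists of the single sentence ``Summarizing Lemmas \ref{lemma1:glb}, \ref{lemma2:glb} and \ref{lemma3:glb}, we have the following theorem.'' Your write-up is simply a more explicit version of that summary, and your translation of sub-cases (a), (b), (c) of Lemma \ref{lemma3:glb} into cases (\ref{eq4:glb}), (\ref{eq3:glb}), (\ref{eq5:glb}) is accurate. One tiny bookkeeping point: your regime (A) as stated, $k\in\{\ell,\ell-1\}$, overlaps regime (B) at $\ell=2,\,k=1$; to match Lemma \ref{lemma2:glb} and case (\ref{eq1:glb}) exactly you should take (A) to be $k=\ell$ or $k=\ell-1\ge 2$, after which the three regimes are genuinely disjoint and exhaustive.
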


\begin{corollary}\label{glb:am&l}
	Let $G$ be a non--complete connected graph of order $n$ with $i(G)=1$ and $\ell\geq 1$ vertices of $\delta(G)$. Then,
	\begin{equation*}
	am(\alpha(G))\geq\begin{cases}
	\frac{\ell}{2}, & \text{$\ell$ is even},\\
	\ell-\lfloor\frac{\ell}{3}\rfloor, & \text{$\ell$ is odd}.
	\end{cases}
	\end{equation*}
	with equality for even $\ell$ if and only if $G=\vee_{i=1}^{\frac{\ell}{2}}N_2$ ($\ell\geq 4$) or $G=(\vee_{i=1}^{\frac{\ell}{2}}N_2)\vee K_{n-\ell}$. In particular, $G=N_2\vee K_{n-2}$ for $\ell=2$.
\end{corollary}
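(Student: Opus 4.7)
The plan is to read the bound off of Theorem \ref{greatestlowerbounds} by computing $\min_{k} m_{\ell,k}$. By the uniqueness of the $k$--join decomposition (Remark \ref{uniqueness}), each $G$ with $i(G)=1$ and $\ell$ vertices of degree $\delta(G)$ is a $k$--join for a unique $k\in\{1,\dots,\ell\}$. If $G\in\cA_{\ell,k}$ then $am(\alpha(G))\geq m_{\ell,k}$ directly; otherwise $G$ is a combined $k$--join with $\alpha(G')$ at the boundary, and Theorem \ref{identity:amdecomposable} then gives a strictly larger $am(\alpha(G))$. Thus it suffices to minimize $m_{\ell,k}$ from the piecewise formula in Theorem \ref{greatestlowerbounds}.

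For even $\ell$, I would take $k=\ell/2$: since $\ell/2\mid\ell$ and $\ell/2<\ell$, case \eqref{eq2:glb} gives $m_{\ell,\ell/2}=\ell/2$. I then verify $m_{\ell,k}>\ell/2$ for all other $k$ by a routine pass through \eqref{eq1:glb}--\eqref{eq5:glb}. In \eqref{eq1:glb}--\eqref{eq2:glb} the claim is immediate once one notes $\ell/2$ is the largest proper divisor of an even $\ell$. In \eqref{eq3:glb}, the constraint $\ell+1\geq 4k$ gives $\lfloor k(\ell-k)/(\ell+k+1)\rfloor\leq k\leq(\ell+1)/4<\ell/2$. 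In \eqref{eq4:glb}, $\lfloor(\ell-k)/3\rfloor\leq\lfloor\ell/3\rfloor\leq\ell/2-1$. In \eqref{eq5:glb}, splitting on $\tilde{c}=2$ (so $k\geq\ell/3$, hence $(\ell-k)/2\leq\ell/3$) versus $\tilde{c}\geq 3$ (so $(\ell-k)/\tilde{c}\leq(\ell-k)/3$) shows the floor is at most $\lfloor\ell/3\rfloor\leq\ell/2-1$. For odd $\ell$, the analogous case--by--case argument using $\lfloor\cdot\rfloor\leq\lfloor\ell/3\rfloor$, together with the observation that the largest proper divisor of an odd $\ell$ is at most $\ell/3$ (since the smallest prime factor is $\geq 3$), yields $m_{\ell,k}\geq\ell-\lfloor\ell/3\rfloor$ for every $k$.

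For the equality characterization when $\ell$ is even, the strict inequalities above force equality into case \eqref{eq2:glb} at $k=\ell/2$. The proof of Lemma \ref{lemma1:glb} then identifies the elementary graphs as $G_i=N_{a+1}$ with $(a+1)(\ell/2)=\ell$, so $a=1$ and $G_i=N_2$. If $G$ is an elementary $(\ell/2)$--join, then $G=\vee_{i=1}^{\ell/2}N_2$, requiring $\ell\geq 4$ so that $k\geq 2$. Otherwise, $G$ is a combined $(\ell/2)$--join with combined graph $G'$ of order $|V(G')|=(\ell/2)\delta(G)-(\ell/2-1)n=n-\ell$; since each $v\in V(G')$ has $\deg_G(v)>\delta(G)=n-2$, every such $v$ is dominating in $G$, forcing $G'=K_{n-\ell}$. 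The inequality $\alpha(K_{n-\ell})=n-\ell>n-\ell-2=(k+1)\delta(G)-kn$ then confirms $G\in\cA_{\ell,\ell/2}$, and the $\ell=2$ subcase collapses to $G=N_2\vee K_{n-2}$ since the elementary option requires $k\geq 2$. The main obstacle I expect is the floor arithmetic in \eqref{eq3:glb} and \eqref{eq5:glb}: one must make every inequality strict for even $\ell$ in order to pin down equality uniquely to case \eqref{eq2:glb} at $k=\ell/2$.
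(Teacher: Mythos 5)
Your proposal is correct and follows essentially the same route as the paper: minimize the piecewise formula of Theorem \ref{greatestlowerbounds} over $k$ (with the same reduction to $\cA_{\ell,k}$ via Theorem \ref{identity:amdecomposable}), then invoke Lemma \ref{lemma1:glb} to pin down the extremal graphs for even $\ell$. The only differences are cosmetic — e.g., you bound case \eqref{eq3:glb} by $\lfloor k(\ell-k)/(\ell+k+1)\rfloor\leq k\leq(\ell+1)/4$ where the paper rewrites it as $\lfloor(\ell-k)/(a+1)\rfloor$, and you identify $G'=K_{n-\ell}$ from the degree condition rather than from $\alpha(G')>|V(G')|-2$ — and all the stated inequalities check out.
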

\begin{proof}
	Let $m_\ell:=\mathrm{min}\{am(\alpha(G))|G\in\cA_\ell\}$. We need only  find $m_\ell$ for even $\ell$ and odd $\ell$, respectively, to complete the proof. Continuing the notation  of Theorem \ref{greatestlowerbounds}, for the case \eqref{eq3:glb}, there exists $a\geq 1$ such that $\ell+1=ak$. Since $k\leq\ell$, we have $a\geq 2$. So, $\ell-\lfloor\frac{k(\ell-k)}{\ell+k+1}\rfloor$ can be recast as $\ell-\lfloor\frac{(\ell-k)}{a+1}\rfloor\geq \ell-\lfloor\frac{(\ell-k)}{3}\rfloor$, \textit{i.e.}, $\lfloor\frac{(\ell-k)}{a+1}\rfloor\leq \lfloor\frac{(\ell-k)}{3}\rfloor$.
	
	Suppose that $\ell$ is even. Then, $\frac{\ell}{2}\mid\ell$. From \eqref{eq2:glb}, we have $m_{\ell,\frac{\ell}{2}}=\ell-\frac{\ell}{2}$ with $k=\frac{\ell}{2}$. Note that $\tilde{c}\geq 2$. So, we have $\lfloor\frac{(\ell-k)}{3}\rfloor<\frac{\ell}{2}$ and $\lfloor\frac{(\ell-k)}{\tilde{c}}\rfloor<\frac{\ell}{2}$ for $1\leq k\leq \ell$. Hence, $m_\ell=\ell-\frac{\ell}{2}$, which is only attained from \eqref{eq2:glb}. Furthermore, we find from Lemma \ref{lemma1:glb} that $am(\alpha(G))=\frac{\ell}{2}$ for $G\in\cA_{\ell}$ if and only if $G=\vee_{i=1}^{\frac{\ell}{2}}N_2$ ($\ell\geq 4$) or $G=(\vee_{i=1}^{\frac{\ell}{2}}N_2)\vee G'$ where $\alpha(G')>|V(G')|-2$. It follows from $\delta(G')\leq |V(G')|-1$ that $G'$ is the complete graph.
	
	It is straightforward that $m_1=1$. Assume that $\ell$ is odd and $3\mid \ell$. Applying \eqref{eq2:glb}, $m_{\ell,\frac{\ell}{3}}=\ell-\frac{\ell}{3}$. Suppose that for \eqref{eq5:glb}, there are $\tilde{c}\geq 2$ and $k_0\geq 1$ such that $\ell\neq 3k_0$ and $\left\lfloor\frac{\ell-k_0}{\tilde{c}}\right\rfloor\geq\frac{\ell}{3}$. Since $k_0\geq 1$, we must have $\tilde{c}=2$. This implies that $\ell>3k_0$. So, $\left\lceil\frac{\ell-k_0}{k_0}\right\rceil>2$, which is a contradiction to $\tilde{c}=\mathrm{max}\{\left\lceil\frac{\ell-k_0}{k_0}\right\rceil,2\}=2$. Hence, $\left\lfloor\frac{\ell-k}{\tilde{c}}\right\rfloor< \frac{\ell}{3}$. Furthermore, since $\lfloor\frac{(\ell-k)}{3}\rfloor<\frac{\ell}{3}$ for $1\leq k\leq \ell$, we have $m_\ell=\ell-\frac{\ell}{3}$.
	
	Suppose that $\ell$ is odd and $\ell=3b+1$ for some $b\geq 2$. Choose $k=b+1$ so that $\ell-k=2b$. Thus, $\lfloor\frac{\ell-k}{2}\rfloor=b$ and by \eqref{eq5:glb}, $m_{\ell,b+1}=\ell-\lfloor\frac{\ell}{3}\rfloor$. If $k$ is in the case of \eqref{eq2:glb}, then $k$ ($\neq \ell$) is a divisor of $\ell$. Then, $k=1$ or $k\geq 5$. Note that $\ell$ is odd and $\ell\geq 7$. It follows that $k<\left\lfloor\frac{\ell}{3}\right\rfloor$ for all divisors $k$ ($\neq\ell$) of $\ell$. Moreover, since we have $\lfloor\frac{(\ell-k)}{3}\rfloor<\lfloor\frac{\ell}{3}\rfloor$ for $k\geq 2$, $m_{\ell,b+1}<m_{\ell,k}$ for any $k$ corresponding to \eqref{eq3:glb} or \eqref{eq4:glb}. Therefore, $m_\ell=\ell-\lfloor\frac{\ell}{3}\rfloor$. 
	
	Similarly, assume that $\ell$ is odd and $\ell=3d+2$ for some $d\geq 1$. Choose $k=d+2$. Then, $\ell-k=2d$ which implies from \eqref{eq5:glb} that $m_{\ell,d+2}=\ell-\lfloor\frac{\ell}{3}\rfloor$. Note that $\ell\geq 5$. For \eqref{eq2:glb}, let $k$ ($\neq \ell$) be a divisor of $\ell$. Then, $k\leq\lfloor\frac{\ell}{3}\rfloor$ with equality if and only if $k=1$ and $\ell=5$. Furthermore, $\lfloor\frac{(\ell-k)}{3}\rfloor\leq\lfloor\frac{\ell}{3}\rfloor$ for $k\geq 2$ with equality if and only if $k=2$. In particular, one can verify that if $k=2$, then $k$ falls under \eqref{eq3:glb}, and $\left\lfloor\frac{k(\ell-k)}{\ell+k+1}\right\rfloor=\lfloor\frac{\ell}{3}\rfloor$ if and only if $\ell=5$. Hence, $m_{\ell,d+2}\leq m_{\ell,k}$ for any $k$ corresponding to \eqref{eq3:glb} or \eqref{eq4:glb} with equality if and only if $k=2$ and $\ell=5$. 
\end{proof}

\begin{remark}{\rm{
	Continuing the notation of Corollary \ref{glb:am&l}, graphs attaining the equality for odd $\ell$ can be classified by the proof in Corollary \ref{glb:am&l}. Suppose that $3\mid\ell$. By Lemma \ref{lemma1:glb}, $G=\vee_{i=1}^{\frac{\ell}{3}}N_3$ for $\ell\geq 6$ or $G=(\vee_{i=1}^{\frac{\ell}{3}}N_3)\vee G'$ where $\alpha(G')>|V(G')|-3$. Assume that $\ell$ is odd and $\ell=3b+1$ for some $b\geq 2$. Since $\ell \geq 7$, the equality is only attained by the case \eqref{eq5:glb}. Hence, $G=(\vee_{i=1}^{b}N_3)\vee(N_1+K_2)$ or $G=(\vee_{i=1}^{b}N_3)\vee(N_1+K_2)\vee G'$ where $\alpha(G')>|V(G')|-3$. Suppose that $\ell=3d+2$ for some $d\geq 1$. For $\ell=5$, the equality holds with $k=1,2,3$. Thus, we have following cases: for $k=1$, $G=N_5\vee G'$ where $\alpha(G')>|V(G')|-5$; for $k=2$, $G=N_4\vee(N_1+ K_3)$, $G=N_4\vee(N_1+(N_1\vee K_2))$, $G=N_4\vee(N_1+K_3)\vee G'$ or $G=N_4\vee(N_1+(N_1\vee K_2))\vee G'$ where $\alpha(G')>|V(G')|-4$; for $k=3$, $G=N_3\vee(N_1+K_2)\vee(N_1+K_2)$ or $G=N_3\vee(N_1+K_2)\vee(N_1+K_2)\vee G'$ where $\alpha(G')>|V(G')|-3$. For $\ell\geq 11$, it can be checked that $m_\ell$ is only attained by $G=(\vee_{i=1}^dN_3)\vee(N_1+K_2)\vee(N_1+K_2)$ or $G=(\vee_{i=1}^dN_3)\vee(N_1+K_2)\vee(N_1+K_2)\vee G'$ where $\alpha(G')>|V(G')|-3$.}}
\end{remark}

The following theorem is our main result in this section for classifying graphs $G$ with $i(G)=1$ and $am(\alpha(G))=1$.

\begin{theorem}\label{extreme:i(G)=am(a(G))=1}
	Let $G$ be a non--complete connected graph of order $n$. Then, $i(G)=1$ and $am(\alpha(G))=1$ if and only if either $G=N_2\vee K_{n-2}$  or $G=G_1\vee G'$ where $G_1$ is a graph of order $n-\delta(G)$ with exactly one isolated vertex, and $G'$ is a graph on $\delta(G)$ vertices with $\alpha(G')>2\delta(G)-n$ and $\delta(G')>2\delta(G)-n$.
\end{theorem}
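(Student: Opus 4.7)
The plan is to combine the classification of connected non--complete graphs with $i(G)=1$ as $k$--joins with the multiplicity formula in Theorem \ref{identity:amdecomposable}, and then analyze when that multiplicity equals $1$.

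For the forward direction, assume $i(G)=1$ and $am(\alpha(G))=1$, so that $G$ is a $k$--join. I would first rule out the elementary case: if $G=\vee_{j=1}^{k}G_j$ with $k\geq 2$, then each elementary $G_j$ has $\beta(G_j)\geq 2$ (its isolated vertex accounts for only one component, and $|V(G_j)|\geq 2$), so Theorem \ref{identity:amdecomposable} gives $am(\alpha(G))\geq k\geq 2$, a contradiction. Thus $G$ is a combined $k$--join $G=(\vee_{j=1}^{k}G_j)\vee G'$. The subcase $\alpha(G')=2\delta(G)-n$ is immediately excluded since it would give $am(\alpha(G))\geq k+am(\alpha(G'))\geq 2$, so $\alpha(G')>2\delta(G)-n$ and $am(\alpha(G))=\sum_j\beta(G_j)-k\geq k$, forcing $k=1$ and $\beta(G_1)=2$. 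Hence $G_1$ has exactly two connected components, one of which is an isolated vertex. Splitting on the second component yields either $G_1=N_2$, in which case $\delta(G)=n-2$ and the combined--join degree condition $\deg_G(v)>\delta(G)$ forces $\deg_{G'}(v)=n-3$ for all $v\in V(G')$, i.e.\ $G'=K_{n-2}$; or else $G_1$ has exactly one isolated vertex (with the other component a connected graph on at least two vertices), in which case the condition $\deg_G(v)>\delta(G)$ on $V(G')$ translates to $\delta(G')>2\delta(G)-n$.

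For the converse, if $G=N_2\vee K_{n-2}$ the join--spectrum formula of Section \ref{sec1} produces Laplacian eigenvalues $0$, $n-2$ (simple) and $n$ (of multiplicity $n-2$), hence $\alpha(G)=\delta(G)$ with $am(\alpha(G))=1$ and $i(G)=1$ by Theorem \ref{equiv:i(G)=1}. If $G=G_1\vee G'$ is the second alternative, the hypothesis $\delta(G')>2\delta(G)-n$ rewrites as $\deg_G(v)>\delta(G)$ for every $v\in V(G')$, so $G$ is a combined $1$--join with $\alpha(G')>2\delta(G)-n$; Theorem \ref{identity:amdecomposable} then gives $am(\alpha(G))=\beta(G_1)-1=1$, while $i(G)=1$ follows from Theorem \ref{equiv:i(G)=1}. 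The main delicate point is the forward direction, where one must simultaneously extract the multiplicity bound $am(\alpha(G))\geq k$ on the combined side and correctly translate the extrinsic degree condition $\deg_G(v)>\delta(G)$ into the intrinsic condition $\delta(G')>2\delta(G)-n$ so that the forced case $k=1$, $\beta(G_1)=2$ produces exactly the two alternatives in the statement.
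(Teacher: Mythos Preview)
Your forward direction is correct and is actually more direct than the paper's argument. The paper proves this implication by invoking Corollary~\ref{glb:am&l}, which in turn rests on the substantial chain Lemma~\ref{lemma1:glb}--Lemma~\ref{lemma3:glb} and Theorem~\ref{greatestlowerbounds}: one first bounds $am(\alpha(G))$ from below in terms of the number $\ell$ of minimum-degree vertices, deduces $\ell\in\{1,2\}$, and then reads off the structure. You bypass all of that by applying Theorem~\ref{identity:amdecomposable} directly to force $k=1$ and $\beta(G_1)=2$, and then splitting on whether the non-isolated component of $G_1$ is a single vertex. This is a genuinely shorter route that avoids the entire $m_{\ell,k}$ analysis.

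There is, however, a gap in your converse (and the paper's ``straightforward'' is no more careful here). From ``$G_1$ has exactly one isolated vertex'' you conclude $\beta(G_1)-1=1$, but the hypothesis only controls the number of \emph{isolated} vertices, not the number of components. For instance, take $G_1=N_1+K_2+K_2$ and $G'=C_5$, so $G=G_1\vee G'$ has $n=10$, $\delta(G)=5$, $2\delta(G)-n=0$, $\alpha(G')>0$, $\delta(G')=2>0$, and $G_1$ has exactly one isolated vertex; yet $\beta(G_1)=3$ and $am(\alpha(G))=2$. Your own forward argument shows that what is really produced is $\beta(G_1)=2$, i.e.\ $G_1$ is an isolated vertex together with a single \emph{connected} graph on at least two vertices; the converse goes through cleanly once the second alternative is read with that stronger condition.
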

\begin{proof}
	Suppose that $i(G)=1$ and $am(\alpha(G))=1$. Let $\ell$ be the number of vertices of the minimum degree in $G$. By Corollary \ref{glb:am&l}, $\ell=1$ or $\ell=2$. For $\ell=1$, since $G$ is connected, $G$ is a $1$--join with $G'$. Since $\mathrm{deg}_G(v)>\delta(G)$ for all $v\in V(G')$, we have $\delta(G')>2\delta(G)-n$. The hypothesis that $am(\alpha(G))=1$ implies that $\alpha(G')>2\delta(G)-n$. For $\ell=2$, the conclusion is clear from Corollary \ref{glb:am&l}.
	
	It is straightforward to prove the converse.
\end{proof}

\begin{example}\label{example:i(G)=am(a(G))=1}{\rm{
	Suppose that $G_1=K_{n_1}+N_1$ and $G'=K_{n_2}$ where $n_1,n_2>0$. Consider $G=G_1\vee G'$. Then, $\alpha(G')=n_2$, $\delta(G')=n_2-1$ and $2\delta(G)-|V(G)|=n_2-n_1-1$. By Theorem \ref{extreme:i(G)=am(a(G))=1}, we have $i(G)=1$ and $am(\alpha(G))=1$.}}
\end{example}

Now, we shall introduce a result without proof, as well as some notation in \cite{Urschel:bisection}, to find pathological graphs with respect to applying spectral bisection for the graph partitioning problem. Let $G$ be a connected graph of order $n$, and let $X$ be the eigenspace corresponding to $\alpha(G)$, and denote
\begin{align*}
i_+(\bx)&:=\{j|1\leq j\leq n, x_j>0\},\\
i_-(\bx)&:=\{j|1\leq j\leq n, x_j<0\},\\
i_0(\bx)&:=\{j|1\leq j\leq n, x_j=0\},\\
i_0(X)&:=\bigcap_{\bx\in X}i_0(\bx).
\end{align*}



\begin{theorem}[\cite{Urschel:bisection}]\label{urschel:theroem}
	Let $G$ be a connected graph. Then, there exists a Fiedler vector $\bx$ such that the subgraphs of $G$ induced by $i_+(\bx)\cup i_0(\bx)$ and $i_-(\bx)$ are connected.
\end{theorem}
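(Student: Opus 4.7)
The plan is to split the theorem into the two sign halves and exploit Fiedler's classical monotonicity theorem for the first half, reducing the problem to locating a specific Fiedler vector whose strictly negative support is connected.

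First, I would recall Fiedler's 1975 monotonicity result: for any Fiedler vector $\bx$ of a connected graph $G$ and any real $c \geq 0$, the subgraph of $G$ induced by $\{v : x_v \geq -c\}$ is connected, and dually for $\{v : x_v \leq c\}$. Taking $c=0$ gives, for \emph{every} Fiedler vector $\bx$, that the subgraph on $i_+(\bx) \cup i_0(\bx)$ is connected. So that part of the conclusion is automatic, and I need only produce some Fiedler vector $\bx$ for which the subgraph induced by $i_-(\bx)$ alone is also connected.

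Next, I would pick an extremal element of $X$: choose $\bx \in X$ with $i_-(\bx)\neq\emptyset$ minimizing $|i_-(\bx)|$. (If every $\bx\in X$ has $i_-(\bx)=\emptyset$, then $\alpha(G)=0$ by Proposition \ref{sign:v=0}, and the statement is trivial.) Assume for contradiction that the induced subgraph on $i_-(\bx)$ decomposes into connected components $S_1,\dots,S_k$ with $k\geq 2$. Fiedler's theorem applied to $-\bx$ forces no edge of $G$ to run between two distinct $S_j$'s, so each $S_j$ is attached to the rest of $G$ only through vertices of $i_0(\bx)\cup i_+(\bx)$.

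The key step is a perturbation inside the eigenspace. I would produce another vector $\bz\in X$ whose values on $S_1$ are strictly positive (while the values on $S_2\cup\cdots\cup S_k$ are not too negative) — the natural candidate is the $L(G)$-orthogonal projection of the indicator $\mathbf{1}_{S_1}$ into $X$. Then $\bx+t\bz \in X$ for every $t$, and for sufficiently small $t>0$ the vertices of $S_2,\dots,S_k$ remain in $i_-$, while the entries on $S_1$ are driven toward $0$. At the smallest $t^{*}>0$ where some entry of $\bx+t\bz$ on $S_1$ hits $0$, the resulting Fiedler vector has strictly fewer negative entries than $\bx$, contradicting minimality. The main obstacle is verifying that a suitable $\bz$ actually exists inside $X$: when $am(\alpha(G))=1$, no true perturbation is available, and a separate direct argument is required — one shows that if the unique Fiedler vector had $i_-(\bx)$ disconnected, the eigenvalue equation $L(G)\bx=\alpha(G)\bx$ together with the fact that no $S_j$--$S_{j'}$ edge exists would force $\bx$ to split into two independent eigenvectors supported on disjoint sets of vertices, contradicting simplicity of $\alpha(G)$. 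Closing that gap, and ensuring that the projection argument in the multiplicity $\geq 2$ case does not accidentally move some vertex of $S_2\cup\cdots\cup S_k$ out of $i_-$ before the first entry of $S_1$ reaches $0$, is where the technical work concentrates.
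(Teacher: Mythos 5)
The paper states this theorem without proof (it is imported from \cite{Urschel:bisection}), so I am comparing your attempt against the argument in that reference. Your first half is fine: Fiedler's monotonicity theorem with $c=0$ does show that $i_+(\bx)\cup i_0(\bx)$ induces a connected subgraph for \emph{every} Fiedler vector, so the whole content is indeed in making $i_-(\bx)$ connected. The difficulty is that your central step --- producing a vector $\bz$ in the eigenspace $X$ that is strictly positive on $S_1$ --- is exactly the point you leave open, and neither of your two proposed routes closes it. The projection of $\mathbf{1}_{S_1}$ onto $X$ has no reason to be entrywise positive on $S_1$ (it can even vanish), and even granting such a $\bz$, the vector $\bx+t\bz$ may acquire \emph{new} negative entries on $i_0(\bx)$ for every $t>0$, so the count $|i_-|$ need not drop at $t^*$. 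Worse, your fallback for $am(\alpha(G))=1$ rests on a false mechanism: the components $S_1,\dots,S_k$ of $i_-(\bx)$ are joined to the rest of $G$ through vertices of $i_0(\bx)\cup i_+(\bx)$ on which $\bx$ need not vanish, so $\bx$ does not ``split into two independent eigenvectors supported on disjoint sets,'' and no contradiction with simplicity follows from that route.

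The missing idea (and the one used in \cite{Urschel:bisection}) is a cut-off/Rayleigh-quotient argument that manufactures the needed Fiedler vector directly. For each component $S_j$ of $i_-(\bx)$ let $\by_j$ agree with $\bx$ on $S_j$ and be zero elsewhere. For $v\in S_j$ one computes $(L(G)\by_j)_v=\alpha(G)x_v+\sum_{u\sim v,\,u\notin S_j}x_u\geq \alpha(G)x_v$, since every neighbour of $v$ outside $S_j$ lies in $i_0(\bx)\cup i_+(\bx)$; multiplying by $x_v<0$ and summing gives $\by_j^TL(G)\by_j\leq\alpha(G)\|\by_j\|^2$. Since $S_1$ and $S_2$ are distinct components, $\by_1^TL(G)\by_2=0$, so any combination $\bz=c_1\by_1+c_2\by_2$ chosen with $c_1<0<c_2$ so that $\bz\perp\mathbf{1}$ (possible because $\mathbf{1}^T\by_j<0$) satisfies $\bz^TL(G)\bz\leq\alpha(G)\|\bz\|^2$; the variational characterization of $\alpha(G)$ forces equality, so $\bz\in X$. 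This $\bz$ is positive on $S_1$, negative exactly on $S_2$, and zero elsewhere, so $i_-(\bz)=S_2$ is connected and the theorem follows in one step --- no extremal choice over $|i_-|$ is needed, and the case $am(\alpha(G))=1$ is handled automatically since $\bz$ is visibly not a multiple of $\bx$.
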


\begin{proposition}\label{join:zero component}
	Let $G$ be a connected graph of order $n$, and $X$ be the eigenspace corresponding to $\alpha(G)$. Suppose that there exists an induced subgraph $G_2$ of $G$ such that $G=G_1\vee G_2$ and $\alpha(G_2)>\alpha(G)-|V(G_1)|$. Then, $V(G_2)\subseteq i_0(X)$.
\end{proposition}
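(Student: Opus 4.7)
My plan is to use the spectral description of $L(G_1\vee G_2)$ that was recalled in the introduction. Write $p=|V(G_1)|$ and $q=|V(G_2)|$, and order coordinates so that $V(G_1)$ comes first. A basis of eigenvectors of $L(G)$ consists of (i) $\mathbf{1}_{p+q}$ for eigenvalue $0$; (ii) $[-q\mathbf{1}_p^T,\,p\mathbf{1}_q^T]^T$ for eigenvalue $p+q$; (iii) vectors $[\bx^T,\mathbf{0}_q^T]^T$ for $\bx\perp\mathbf{1}_p$ with $L(G_1)\bx=\lambda_j(G_1)\bx$, having eigenvalue $\lambda_j(G_1)+q$ for $1\le j\le p-1$; and (iv) vectors $[\mathbf{0}_p^T,\by^T]^T$ for $\by\perp\mathbf{1}_q$ with $L(G_2)\by=\lambda_j(G_2)\by$, having eigenvalue $\lambda_j(G_2)+p$ for $1\le j\le q-1$. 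The Fiedler eigenspace $X$ is the span of those basis vectors whose corresponding eigenvalue equals $\alpha(G)$.

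The heart of the argument is to eliminate the basis vectors in families (i), (ii), and (iv). Since $G$ is connected, $\alpha(G)>0$, ruling out $\mathbf{1}_{p+q}$. The hypothesis $\alpha(G_2)>\alpha(G)-p$ gives, for every $1\le j\le q-1$,
\begin{equation*}
\lambda_j(G_2)+p\ge\alpha(G_2)+p>\alpha(G),
\end{equation*}
so no vector in family (iv) has eigenvalue $\alpha(G)$. For family (ii), I would invoke the standard bound $\alpha(G_2)\le q$ (with equality only when $G_2=K_q$) to conclude
\begin{equation*}
\alpha(G)<\alpha(G_2)+p\le p+q,
\end{equation*}
so $\alpha(G)\ne p+q$ and $[-q\mathbf{1}^T,p\mathbf{1}^T]^T\notin X$.

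Therefore $X$ is spanned purely by vectors of the form $[\bx^T,\mathbf{0}_q^T]^T$, where $\bx$ ranges over eigenvectors of $L(G_1)$ orthogonal to $\mathbf{1}_p$ with eigenvalue $\alpha(G)-q$. Every element of $X$ thus vanishes on the coordinates indexed by $V(G_2)$, which gives $V(G_2)\subseteq i_0(\bz)$ for all $\bz\in X$, and hence $V(G_2)\subseteq i_0(X)$. The only subtle point is ruling out $\alpha(G)=p+q$; this is where the bound $\alpha(G_2)\le |V(G_2)|$ is needed, turning the strict hypothesis $\alpha(G_2)>\alpha(G)-p$ into the strict inequality $\alpha(G)<p+q$.
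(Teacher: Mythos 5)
Your proof is correct and follows the same route as the paper, which simply cites "eigenvectors of the join of graphs" together with the hypothesis $\alpha(G_2)>\alpha(G)-|V(G_1)|$; you have filled in exactly the details that sketch leaves implicit, including the (correct) use of $\alpha(G_2)\le |V(G_2)|$ to exclude the eigenvalue $p+q$.
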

\begin{proof}
	Considering eigenvectors of the join of graphs and the condition that $\alpha(G_2)>\alpha(G)-|V(G_1)|$, it implies that for any Fiedler vector, vertices of $V(G_2)$ are valuated by $0$. Hence, $V(G_2)\subseteq i_0(X)$.
\end{proof}

\begin{example}\label{remark:almost}{\rm{
	The converse of Proposition \ref{join:zero component} does not hold for the following graph $G$:
	\begin{center}
		\begin{tikzpicture}
		\tikzset{enclosed/.style={draw, circle, inner sep=0pt, minimum size=.10cm, fill=black}}
		
		\node[enclosed, label={above, yshift=0cm: $v_1$}] (v_1) at (0,2) {};
		\node[enclosed, label={above, yshift=0cm: $v_2$}] (v_2) at (1.5,2) {};
		\node[enclosed, label={above, yshift=0cm: $v_3$}] (v_3) at (2.5,2) {};
		\node[enclosed, label={above, yshift=0cm: $v_4$}] (v_4) at (4,2) {};
		\node[enclosed, label={below, yshift=0cm: $v_5$}] (v_5) at (0.5,0) {};
		\node[enclosed, label={below, yshift=0cm: $v_6$}] (v_6) at (1.5,0) {};
		\node[enclosed, label={below, yshift=0cm: $v_7$}] (v_7) at (2.5,0) {};
		\node[enclosed, label={below, yshift=0cm: $v_8$}] (v_8) at (3.5,0) {};

		\draw (v_1) -- (v_2);
		\draw (v_3) -- (v_4);
		\draw (v_2) -- (v_5);
		\draw (v_2) -- (v_6);
		\draw (v_2) -- (v_7);
		\draw (v_2) -- (v_8);
		\draw (v_3) -- (v_5);
		\draw (v_3) -- (v_6);
		\draw (v_3) -- (v_7);
		\draw (v_3) -- (v_8);
		\end{tikzpicture}.
	\end{center}
	Let $X$ be the eigenspace corresponding to $\alpha(G)$. It follows from computations that $\lambda_1(G)<|V(G)|=8$, $am(\alpha(G))=1$ and $i_0(X)=\{v_5,v_6,v_7,v_8\}$. Since $\lambda_1(G)<8$, $G$ cannot be expressed as a join. }}
\end{example}

Theorem \ref{urschel:theroem} provides the existence of a Fiedler vector preserving connectedness of the two subgraphs for any connected graph. However, this does not guarantee that such a Fiedler vector gives a partition into two subgraphs such that they are similar in size. Next, we will show a family of graphs such that sign patterns of all Fiedler vectors are extremely unbalanced. In Theorem \ref{urschel:theroem}, we may slightly change the condition for the result as follows: the subgraphs of $G$ induced by $i_-(\bx)\cup i_0(\bx)$ and $i_+(\bx)$ are connected.

\begin{example}\label{example:worse}{\rm{
	Suppose that $G$ is a non--complete connected graph of order $n$ with $i(G)=1$ and $am(G)=1$. Then, by Theorem \ref{extreme:i(G)=am(a(G))=1}, $G=N_2\vee K_{n-2}$ or $G$ is a $1$--join with $G'$ where $\alpha(G')>2\delta(G)-n$ and $\delta(G')>2\delta(G)-n$. For a Fiedler vector $\bx$ of $G=N_2\vee K_{n-2}$, without loss of generality, two subgraphs of $G$ induced by $i_-(\bx)\cup i_0(\bx)$ and $i_+(\bx)$ are $K_{n-1}$ and $N_1$, respectively. 
	
	For the latter case $G=G_1\vee G'$, let us revisit Example \ref{example:i(G)=am(a(G))=1}. Suppose that $X$ is the eigenspace corresponding to $\alpha(G)$ where $G=(K_{n_1}+N_1)\vee K_{n_2}$. By Proposition \ref{join:zero component}, we have $K_{n_2}\subseteq i_0(X)$. Since $am(\alpha(G))=1$, $i_0(X)=K_{n_2}$. From Theorem \ref{urschel:theroem}, we may have that $i_-(\bx)\cup i_0(\bx)$ and $i_+(\bx)$ are $K_{n_2+1}$ and $K_{n_1}$, respectively. Therefore, for pairs $(n_1,n_2)$ such that $\frac{n_1}{n_2}\rightarrow\infty$, the corresponding graph $G$ will be pathological with respect to spectral bisection.}}
\end{example}


\section{Some classes of graphs with $i(G)=1$}


In this section, we will consider threshold graphs and graphs with three distinct Laplacian eigenvalues in the context of $i(G)=1$.

\begin{definition}\label{def:thresh}
	A threshold graph is a graph obtained from a single vertex by repeatedly performing one of the following operations:
	\begin{enumerate}
		\item addition of a single isolated vertex to the graph
		\item addition of a dominating vertex.
	\end{enumerate} 
\end{definition}

\begin{proposition}
	Every connected threshold graph $G$ of order $n$ has $i(G)=1$.
\end{proposition}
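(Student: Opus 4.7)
The plan is to apply Theorem \ref{equiv:i(G)=1} by reading an explicit join decomposition directly off the construction sequence of the threshold graph. The complete graph case $G=K_n$ is handled by the example following Lemma \ref{sign:v=1}, so I may assume that $G$ is non-complete and connected with $n\geq 3$.

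Label the vertices $v_1,\dots,v_n$ in the order in which they are added, and for $i\geq 2$ let $\sigma_i\in\{I,D\}$ record whether $v_i$ is added as an isolated vertex or as a dominating vertex. Connectedness forces $\sigma_n=D$, while non-completeness forces at least one $\sigma_i$ to equal $I$. Let $k\geq 1$ be the length of the trailing block of $D$'s, so that $\sigma_{n-k+1}=\cdots=\sigma_n=D$ and $\sigma_{n-k}=I$. Set $S=\{v_{n-k+1},\dots,v_n\}$. Each $v_i\in S$ is adjacent to all earlier vertices (since $\sigma_i=D$) and to all later vertices of $S$ (the same reason applied to them), so every vertex of $S$ is a dominating vertex of $G$. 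Hence $G[S]=K_k$ and $G=H\vee K_k$, where $H$ is the threshold graph induced on $\{v_1,\dots,v_{n-k}\}$; by maximality of $k$, the vertex $v_{n-k}$ is isolated in $H$.

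It remains to check that this decomposition meets the hypotheses of Theorem \ref{equiv:i(G)=1}. Because $v_{n-k}$ has exactly the $k$ neighbors in $S$ while every vertex of $G$ has \emph{at least} those same $k$ neighbors in $S$, we get $\delta(G)=k$; consequently $|V(H)|=n-\delta(G)$ and $|V(K_k)|=\delta(G)$, so the sizes are correct. Finally, $\alpha(K_k)=k\geq 2k-n$ for $k\geq 2$, and $\alpha(K_1)=0\geq 2-n$ for $n\geq 2$, so $\alpha(G_2)\geq 2\delta(G)-n$ in either case, and Theorem \ref{equiv:i(G)=1} concludes $i(G)=1$.

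The only real obstacle is the bookkeeping required to guarantee that the vertex exhibited as isolated in $H$ really is isolated once the trailing $D$-block is stripped off; this is handled by taking $k$ maximal so that $\sigma_{n-k}=I$ is the \emph{last} operation used in constructing $H$. No delicate spectral computation is required beyond the standard identity $\alpha(K_k)=k$.
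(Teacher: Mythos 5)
Your proof is correct, but it takes a genuinely different route from the paper. The paper argues by induction on $n$: it peels off a single dominating vertex $v$, and either $G-v$ is disconnected (so Theorem \ref{equiv:i(G)=1}(3) applies at once) or $G-v$ is a smaller non--complete connected threshold graph, in which case the induction hypothesis gives $\alpha(G-v)=\delta(G-v)$ and the join spectrum yields $\alpha(G)=\alpha(G-v)+1=\delta(G-v)+1=\delta(G)$, i.e.\ condition (2) of the theorem. You instead strip off the entire trailing block of dominating--vertex operations in one step, exhibiting the decomposition $G=H\vee K_k$ with an isolated vertex $v_{n-k}$ in $H$ and verifying $\delta(G)=k$ directly, so that condition (3) of Theorem \ref{equiv:i(G)=1} applies without any induction. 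Your version is more self--contained and has the side benefit of producing an explicit decomposition (hence, via Lemma \ref{i=1:a=delta}, an explicit Fiedler vector with one negative entry), while the paper's induction is shorter to state and makes visible the recursion $\alpha(G)=\alpha(G-v)+1$. Two cosmetic points: the phrase ``every vertex of $G$ has at least those same $k$ neighbors in $S$'' is not literally true for the vertices of $S$ themselves (each has only $k-1$ neighbors inside $S$); what you want is simply that each vertex of $S$ is dominating and hence has degree $n-1\geq k$, since $k\leq n-2$. And for $k=1$ the value $\alpha(K_1)$ is a matter of convention, but the paper's Remark \ref{order:G_1,G_2} explicitly allows $G_2$ to be a single vertex, so the application of the theorem is legitimate either way.
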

\begin{proof}
	We will use induction on the number of vertices to complete the proof. If $G$ is a complete graph, we are done. Let $G$ be a non--complete connected threshold graph of order $n$. For order $3$, $N_2\vee N_1$ is the only such graph, and $i(N_2\vee N_1)=1$. Let $n>3$. Suppose that any non--complete connected threshold graph $H$ of order $k<n$ satisfies $i(H)=1$. Since $G$ is a connected threshold graph, there exists a vertex $v$ with $\mathrm{deg}(v)=n-1$. Let $G'=G-\{v\}$. Suppose that $G'$ is connected. Then, $G'$ is not complete, otherwise, $G$ would be complete. By induction, $i(G')=1$, and so $\delta(G')=\alpha(G')$. Considering the spectrum of $G'\vee\{v\}$, we have  $$\alpha(G)=\alpha(G')+1=\delta(G')+1=\delta(G).$$ Therefore, $i(G)=1$. If $G'$ is disconnected, then $G'$ has an isolated vertex. By Theorem \ref{equiv:i(G)=1}, $i(G)=1$.
\end{proof}

The spectrum of a threshold graph appears in \cite{Merris:degree maximal graph}. In the paper \cite{Merris:degree maximal graph}, a connected threshold graph is called a maximal graph since it is proved there that the degree sequence of a connected threshold graph of size $m$ is not majorized by any other degree sequences of graphs of size $m$. In particular, we will introduce the following results used for seeing how $am(\alpha(G))$ plays a role. 

\begin{theorem}[\cite{Merris:degree maximal graph}]\label{maximal:conjugate,laplacian}
	If $G$ is a connected threshold graph, then $S(G)=\mathbf{d}^*$ where $\mathbf{d}^*$ is the conjugate of the degree sequence of $G$.
\end{theorem}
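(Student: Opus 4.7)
My plan is to prove a slightly stronger statement, namely that $S(L(G))=\mathbf{d}^*$ (as multisets, with $\mathbf{d}^*$ regarded as a sequence of length $n$ padded with trailing zeros) for \emph{every} threshold graph $G$, not only the connected ones. This strengthening is needed because the natural induction peels off the last-added vertex, and a connected threshold graph may sit above a disconnected one in the construction. I would argue by induction on $n=|V(G)|$, following the recursive construction in Definition \ref{def:thresh}. The base case $n=1$ is trivial: $G=K_1$ has Laplacian spectrum $(0)$ and $\mathbf{d}^*=(0)$.

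For the inductive step, let $G$ be a threshold graph on $n\geq 2$ vertices, let $v$ be the last vertex adjoined in a valid construction of $G$, and let $G'=G-v$, a threshold graph on $n-1$ vertices. I would split into the two construction cases. If $v$ was added as an isolated vertex, then $L(G)=L(G')\oplus [0]$, so $S(L(G))$ is $S(L(G'))$ with an extra $0$ appended. On the degree side, the degree sequence of $G$ is that of $G'$ with an extra $0$, so $d_k^*(G)=d_k^*(G')$ for $k\geq 1$ while the length of $\mathbf{d}^*$ (after padding) grows by one trailing zero; the inductive hypothesis then yields the claim.

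If instead $v$ was added as a dominating vertex, then $G=G'\vee\{v\}$ and the spectrum formula for joins recalled in Section \ref{sec1}, with $p=n-1$ and $q=1$, gives that the eigenvalues of $L(G)$ in non-increasing order are
\[
n,\ \lambda_1(G')+1,\ \lambda_2(G')+1,\ \ldots,\ \lambda_{n-2}(G')+1,\ 0.
\]
On the degree-sequence side, every old vertex gains one in degree (from its new edge to $v$) and $v$ has degree $n-1$, so a direct count shows $d_1^*(G)=n$, $d_k^*(G)=d_{k-1}^*(G')+1$ for $2\leq k\leq n-1$, and $d_n^*(G)=0$. Applying the inductive hypothesis $\lambda_i(G')=d_i^*(G')$ term-by-term matches the two sequences.

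The main subtlety — and therefore the only step that takes real care — is the dominating-vertex case: one has to verify that the uniform ``$+1$'' shift in the join spectrum formula matches the index shift $k\mapsto k-1$ together with the ``$+1$'' contribution from the new dominating vertex in the conjugate partition, and that the two boundary eigenvalues $n$ (at the top) and $0$ (at the bottom) appear in exactly the right positions to keep both sides sorted in non-increasing order. Once both sides are written out explicitly, the coincidence is forced, and the induction closes.
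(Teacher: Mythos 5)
Your argument is correct and complete. Note that the paper does not actually prove this statement: it is quoted verbatim from Merris's paper on degree maximal graphs and used as a black box, so there is no internal proof to compare against. Your induction is essentially the standard proof of Merris's theorem, and the one genuinely necessary idea --- strengthening the inductive hypothesis to all threshold graphs, connected or not, so that peeling off the last-added vertex stays within the class --- is correctly identified and handled. The two cases check out: the isolated-vertex case is the trivial direct-sum observation, and in the dominating-vertex case the join formula with $q=1$ gives the multiset $\{n,\lambda_1(G')+1,\dots,\lambda_{n-2}(G')+1,0\}$, which matches your computation $d_1^*(G)=n$, $d_k^*(G)=d_{k-1}^*(G')+1$ for $2\leq k\leq n-1$ (the dominating vertex contributes $1$ to each such count since its degree is $n-1$), and $d_n^*(G)=0$; the bounds $\lambda_1(G')\leq n-1$ and $\lambda_{n-2}(G')\geq 0$ guarantee both lists are already in non-increasing order, so the term-by-term identification is legitimate.
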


\begin{theorem}[\cite{Merris:degree maximal graph}]\label{maximal:component,isolated}
	Let $G$ be a threshold graph. Suppose that $G$ is disconnected so that there are $\ell+1$ connected components. Then, $\ell$ components consist of isolated vertices.
\end{theorem}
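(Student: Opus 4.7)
The plan is to argue directly from the construction sequence that defines $G$. Write $v_1, v_2, \ldots, v_n$ for the order in which the vertices of $G$ are introduced, so that $v_1$ is the initial vertex and, for each $i \geq 2$, the vertex $v_i$ is added either as an isolated vertex or as a dominating vertex of the intermediate graph on $\{v_1, \ldots, v_{i-1}\}$. The key structural observation is that adding a dominating vertex $v_j$ joins $v_j$ to every previously present vertex, and thus merges all existing connected components (together with $v_j$) into a single component, while adding an isolated vertex contributes a new singleton component and changes no other adjacencies.

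With this in hand, I would let $k$ denote the largest index such that $v_k$ was introduced via the dominating operation (setting $k = 1$ if no dominating operation was ever used when building $G$). After step $k$, the induced subgraph on $\{v_1, \ldots, v_k\}$ is connected, since $v_k$ is adjacent to every earlier vertex. By the choice of $k$, every vertex $v_{k+1}, \ldots, v_n$ is added as an isolated vertex and no later dominating operation creates any incidence with it, so each of $v_{k+1}, \ldots, v_n$ is an isolated vertex in the final graph $G$.

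Consequently $G$ decomposes as the disjoint union of at most one non-trivial component, supported on $\{v_1, \ldots, v_k\}$, together with the $n-k$ singletons $\{v_{k+1}\}, \ldots, \{v_n\}$. If $G$ has $\ell + 1$ connected components in total, then at least $\ell$ of them are isolated vertices, giving the claimed conclusion. The only edge case is when no dominating operation is ever performed: then all of $v_1, \ldots, v_n$ are isolated, so $\ell + 1 = n$ and every component is a singleton, so the statement still holds vacuously. I do not anticipate a substantive obstacle here; the only point that requires a moment of care is the bookkeeping for when $k=1$, to ensure that $v_1$ itself is counted correctly among the singletons.
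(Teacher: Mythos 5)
Your argument is correct. Note that the paper itself gives no proof of this statement: it is quoted verbatim from Merris's paper on degree maximal graphs and used as a black box, so there is nothing internal to compare against. Your direct induction-free argument from the recursive definition of a threshold graph is a clean, self-contained substitute: once $k$ is taken to be the last step at which a dominating vertex is added, the graph induced on $\{v_1,\dots,v_k\}$ is connected (as $v_k$ is adjacent to all earlier vertices, and no later operation alters edges among old vertices), and every vertex added after step $k$ stays isolated forever, so all but at most one component of $G$ is a singleton. The $k=1$ bookkeeping is handled correctly, since then the lone component on $\{v_1\}$ is itself an isolated vertex and the conclusion holds with room to spare. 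This is exactly the standard proof one would extract from the literature, so no concerns.
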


\begin{proposition}
	Suppose that $G$ is a non--complete connected threshold graph of order $n$. Then, $\alpha(G)=k$ and $am(\alpha(G))=\ell$ if and only if there are exactly $k$ vertices $v_1.\dots,v_k$ so that $\mathrm{deg}_G(v_i)=n-1$ for $i=1,\dots,k$ and the subgraph $G_1$ of $G$ induced by $V(G)-\{v_1,\dots,v_k\}$ consists of $\ell+1$ components, $\ell$ components of which consist of a single vertex, respectively.
\end{proposition}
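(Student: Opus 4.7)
The plan is to realize $G$ as a join $G=G_1\vee K_k$, where $K_k$ is the clique on the $k$ dominating vertices and $G_1$ is the induced subgraph on the remaining $n-k$ vertices, and then to read off $\alpha(G)$ and $am(\alpha(G))$ from the join spectrum formula recorded in Section \ref{sec1}, combined with the threshold-graph tools already in hand (Theorems \ref{maximal:conjugate,laplacian} and \ref{maximal:component,isolated}).

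For the forward direction, I would first observe that $|V(G_1)|\geq 2$, since otherwise $G$ would be complete. Among the vertices of $G_1$, the $\ell\geq 1$ isolated ones have $G$-degree exactly $k$, while every other vertex of $G$ has degree at least $k+1$; hence $\delta(G)=k$. Because $G$ is a non-complete connected threshold graph we already know that $i(G)=1$, so Lemma \ref{sign:v=1} gives $\alpha(G)=\delta(G)=k$. The multiplicity then follows from the join spectrum formula: the eigenvalue $k$ of $L(G)$ is produced only by zero eigenvalues of $L(G_1)$ shifted by $k$, since the remaining contributions to the spectrum of $L(G)$ are $0$, copies of $n$, and values strictly greater than $k$, and the bound $k\leq n-2$ rules out accidental collisions. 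Since $\beta(G_1)=\ell+1$, there are exactly $\ell$ zero eigenvalues among $\lambda_1(G_1),\dots,\lambda_{|V(G_1)|-1}(G_1)$, giving $am(\alpha(G))=\ell$.

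For the converse, suppose $\alpha(G)=k$ and $am(\alpha(G))=\ell$. The key input is Theorem \ref{maximal:conjugate,laplacian}: the Laplacian spectrum of $G$ equals the conjugate $\mathbf{d}^*$ of its degree sequence, so $\alpha(G)=d^*_{n-1}=|\{j:d_j\geq n-1\}|$, which is precisely the number of dominating vertices of $G$. Thus there are exactly $k$ such vertices $v_1,\dots,v_k$. Writing $G=G_1\vee K_k$ and running the forward join-spectrum calculation in reverse gives $\beta(G_1)-1=am(\alpha(G))=\ell$, so $G_1$ has $\ell+1$ components. Because threshold graphs are closed under vertex deletion (no forbidden induced subgraph $P_4$, $C_4$, or $2K_2$ is created by removing vertices), $G_1$ is itself a threshold graph, and since it is disconnected, Theorem \ref{maximal:component,isolated} forces exactly $\ell$ of its components to be single vertices.

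The main technical care is in (i) verifying that the eigenvalue $k$ in the join spectrum is produced \emph{only} by the zero eigenvalues of $L(G_1)$, which is handled by the non-completeness bound $k\leq n-2$, and (ii) noting that $G_1$ inherits the threshold-graph property, so that Theorem \ref{maximal:component,isolated} applies. Neither step is a serious obstacle; the proof is essentially a bookkeeping exercise with the join spectrum once these two observations are in place.
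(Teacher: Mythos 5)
Your proof is correct and follows essentially the same route as the paper's: identify the $k$ dominating vertices via Theorem \ref{maximal:conjugate,laplacian}, write $G=G_1\vee K_k$, read off the multiplicity from the join spectrum, and invoke Theorem \ref{maximal:component,isolated} for the singleton components. The extra details you supply (the no-collision check in the join spectrum and the observation that $G_1$ is again a threshold graph) are points the paper leaves implicit, but they do not change the argument.
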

\begin{proof}
	Suppose that $\alpha(G)=k$ and $am(\alpha(G))=\ell$. By Theorem \ref{maximal:conjugate,laplacian}, the number of vertices of degree $n-1$ is $\alpha(G)$. There are exactly $k$ vertices $v_1, \dots,v_k$ such that $\mathrm{deg}_G(v_i)=n-1$ for $i=1,\dots,k$. Suppose that $G_1$ is the subgraph of $G$ induced by $V(G)-\{v_1,\dots,v_k\}$. Since there are only $k$ vertices of degree $n-1$ in $G$, the graph $G_1$ is disconnected. Moreover, $G=G_1\vee K_k$. Since $am(\alpha(G))=\ell$, from Theorem \ref{maximal:component,isolated}, we obtain the desired result.
	
	For the converse, evidently we have $G=G_1\vee K_k$. Since $G_1$ has exactly $\ell$ isolated vertices, $\alpha(G)=k$ and $am(\alpha(G))=\ell$.
\end{proof}

Now, we will investigate an equivalent condition for a graph $G$  that is a join having  three distinct Laplacian eigenvalues to have $i(G)=1$.

\begin{proposition}\label{three:distinct}
	Let $G$ be a non--complete, connected graph of order $n$. The graph $G$ has three  distinct Laplacian eigenvalues $0,\alpha(G)$ and $n$ where $am(\alpha(G))=k$ if and only if there exist integers $p\geq 0$, $q\geq1$ and $r\geq 2$ such that $p+q\geq 2$ and $G=K_p\vee(\vee_{i=1}^q N_r)$ where $n=qr+p$, $\alpha(G)=r(q-1)+p$ and $k=q(r-1)$.
\end{proposition}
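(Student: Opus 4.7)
The plan is to work from the eigenvalue $n$ first. Since $G$ has $n$ as a Laplacian eigenvalue, it is classical (and also immediate from the join spectrum formula recalled in the Introduction) that $\bar{G}$ is disconnected, so $G$ admits a canonical join decomposition $G=H_1\vee\cdots\vee H_m$ with $m\geq 2$, where the $H_i$ are the complements of the connected components of $\bar{G}$; in particular each $\bar{H_i}$ is connected. I will fix this decomposition and exploit the spectral formula for joins iteratively to read off a lot of information.

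Writing $n_i=|V(H_i)|$, the iterated join spectrum shows that the Laplacian spectrum of $G$ consists of a single $0$, the eigenvalue $n$ with multiplicity $m-1$, and for each $i$ the shifted multiset $\{\lambda_j(H_i)+(n-n_i):j=1,\dots,n_i-1\}$ (keeping all eigenvalues of $H_i$ except one $0$). Under the hypothesis that the only eigenvalues of $L(G)$ are $0,\alpha(G),n$, every entry of every such shifted multiset must equal $\alpha(G)$ (it cannot equal $n$, because $\bar{H_i}$ connected forces $\lambda_1(H_i)<n_i$, hence $\lambda_1(H_i)+(n-n_i)<n$). So for each $i$, the Laplacian eigenvalues of $H_i$ are $0$ together with $\alpha(G)-n+n_i$ repeated $n_i-1$ times.

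The main step is now to identify these $H_i$. If $n_i=1$ then $H_i=K_1$ and contributes nothing. If $n_i\geq 2$ there are two cases. Either $\alpha(G)-n+n_i=0$, which gives $L(H_i)=0$, hence $H_i=N_{n_i}$ and $n_i=n-\alpha(G)$; or $\alpha(G)-n+n_i>0$, in which case $H_i$ is a connected graph with exactly two Laplacian eigenvalues $\{0,\lambda\}$, $\lambda$ of multiplicity $n_i-1$. A standard fact (trace plus count of eigenvalues) forces such an $H_i$ to be $K_{n_i}$ with $\lambda=n_i$, so $\alpha(G)=n$, contradicting $\alpha(G)\leq\delta(G)<n$. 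Hence the second case is impossible, and each $H_i$ with $n_i\geq 2$ must equal $N_r$ for the common value $r:=n-\alpha(G)\geq 2$. Letting $p$ be the number of $H_i$ that are $K_1$ and $q$ the number that are $N_r$, we get $G=K_p\vee(\vee_{i=1}^q N_r)$ with $p+q=m\geq 2$, and $q\geq 1$ because $\alpha(G)\neq n$ (non-completeness of $G$ forces $r\geq 2$, so some $H_i$ must actually be $N_r$, otherwise $G=K_p=K_n$). The identities $n=qr+p$, $\alpha(G)=n-r=r(q-1)+p$, and $am(\alpha(G))=q(r-1)=k$ then follow by counting contributions.

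The converse is a direct computation: given the form $G=K_p\vee(\vee_{i=1}^q N_r)$ with the stated parameter restrictions, decompose $G$ into $p+q$ join summands ($p$ copies of $K_1$ and $q$ copies of $N_r$) and apply the join spectrum formula once more. Each $K_1$ contributes nothing, each $N_r$ contributes the eigenvalue $n-r$ with multiplicity $r-1$, and the join contributes the eigenvalues $0$ and $n$ (the latter with multiplicity $p+q-1$); this gives exactly three distinct eigenvalues $0,\alpha(G)=n-r,n$ with the stated multiplicity $k=q(r-1)$. The only mild subtlety, which I expect to be the place that needs the most care, is the case analysis in the forward direction distinguishing $\alpha(G)-n+n_i=0$ from $>0$ and ruling out the latter using $\alpha(G)<n$; everything else is formula-pushing.
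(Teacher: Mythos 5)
Your proposal is correct and follows essentially the same route as the paper: both decompose $G$ as the join of the complements of the connected components of $\bar{G}$, use the join spectrum to force each factor to be $K_1$ or an empty graph of the common order $r=n-\alpha(G)$, and then read off the parameters $p,q,r$. Your version is if anything slightly more careful at the factor-identification step, since you pin down the full spectrum of each factor rather than just counting its distinct eigenvalues.
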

\begin{proof}
	Suppose that $G$ has $3$ distinct Laplacian eigenvalues $0,\alpha(G)$ and $n$. Then, the complement $\bar{G}$ of $G$ has $n-k$ connected components since $\bar{G}$ has $0$ as an eigenvalue with multiplicity $n-k$. Hence, there are graphs $G_1,\dots, G_{n-k}$ such that $G=G_1\vee\cdots\vee G_{n-k}$ where $n-k\geq 2$. Note that for $i=1,\dots,n-k$, $L(G_i)$ does not have $|V(G_i)|$ as an eigenvalue. If there is a $G_j$ with three distinct eigenvalues, then  from the spectrum of a join of graphs, we find that $G$ has more than three distinct eigenvalues, a contradiction. So, each $G_i$ has either one or two distinct eigenvalues. The only graphs with one eigenvalue are empty graphs, and the only graphs with two distinct eigenvalues are complete graphs. So, each $G_i$ is either $N_{r_i}$ or $K_{p_i}$ for some $r_i$ or $p_i$. Consider $N_{r_i}$ and $N_{r_j}$ for $r_i,r_j\geq 2$ and $r_i\neq r_j$. Then, $L(N_{r_i}\vee N_{r_j})$ has $4$ distinct eigenvalues $0,r_i,r_j$ and $r_i+r_j$. Hence, all empty graphs as factors in $G_1\vee\cdots\vee G_{n-k}$ must have the same order. Evidently, $K_{p_i}\vee K_{p_j}=K_{p_i+p_j}$ for $p_i,p_j\geq 1$. If $G_i$ is a complete graph, then $G_i=K_1$. Let $p$ be the number of isolated vertices in $\bar{G}$, let $q$ be the number of the complete graphs of order $r\geq 2$ in $\bar{G}$. If $q=0$, then $G$ is a complete graph. So, $q\geq 1$. If $p+q=1$, then $G$ is disconnected and so $p+q\geq 2$. Therefore, we have the desired graph $G$. Considering the spectrum of a join of graphs, the remaining conditions for $n$, $\alpha(G)$ and $k$ can be checked.
	
	By the spectrum of a join, the proof of the converse is straightforward.
\end{proof}

\begin{corollary}
	Let $G$ be a non--complete, connected graph of order $n$ with three distinct Laplacian eigenvalues. The largest Laplacian eigenvalue is $n$ if and only if $i(G)=1$.
\end{corollary}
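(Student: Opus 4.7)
The plan is to run both directions through the structural classification in Proposition \ref{three:distinct} together with the characterization $i(G)=1 \Leftrightarrow \alpha(G)=\delta(G)$ given by Theorem \ref{equiv:i(G)=1}.

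For the forward implication, suppose the largest Laplacian eigenvalue equals $n$, so that the three distinct Laplacian eigenvalues of $G$ are $0, \alpha(G)$ and $n$. Proposition \ref{three:distinct} then supplies the explicit structure $G = K_p \vee \bigl(\vee_{i=1}^q N_r\bigr)$ with $p\geq 0$, $q\geq 1$, $r\geq 2$, $p+q\geq 2$, together with the identities $\alpha(G)=r(q-1)+p$ and $n=qr+p$. I would finish with a one-line degree count: a vertex in any copy of $N_r$ is joined to every vertex outside its copy and to none inside it, so has degree $n-r$, while a vertex of $K_p$ has degree $n-1$. Since $r\geq 2$, this yields $\delta(G)=n-r=r(q-1)+p=\alpha(G)$, and Theorem \ref{equiv:i(G)=1} gives $i(G)=1$.

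For the converse, assume $i(G)=1$. Theorem \ref{equiv:i(G)=1} produces a nontrivial join decomposition $G=G_1\vee G_2$. The Laplacian join spectrum formula recalled in Section \ref{sec1} lists $|V(G_1)|+|V(G_2)|=n$ among the eigenvalues of $L(G)$, so $n\in S(L(G))$. Because $G$ is non-complete we have $\alpha(G)\leq\delta(G)\leq n-2<n$, so $n$ is neither $0$ nor $\alpha(G)$; as $G$ has exactly three distinct Laplacian eigenvalues, $n$ must be the largest.

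I do not anticipate any real obstacle: all of the heavy lifting is already done in Proposition \ref{three:distinct} and Theorem \ref{equiv:i(G)=1}, and the argument reduces to the two easy observations that (i) the graphs classified by Proposition \ref{three:distinct} satisfy $\alpha(G)=\delta(G)$ by a direct degree computation, and (ii) the total vertex count always appears as a Laplacian eigenvalue of a join.
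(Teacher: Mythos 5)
Your proof is correct and follows essentially the same route as the paper: both directions run through Proposition \ref{three:distinct} and Theorem \ref{equiv:i(G)=1}, with the converse using the observation that a join always has $n$ as a Laplacian eigenvalue. The only (immaterial) difference is that in the forward direction you close via condition (2) of Theorem \ref{equiv:i(G)=1} by the degree count $\delta(G)=n-r=\alpha(G)$, whereas the paper closes via condition (3) by rewriting $G=N_r\vee(K_p\vee(\vee_{i=1}^{q-1}N_r))$.
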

\begin{proof}
	Suppose that the largest Laplacian eigenvalue is $n$. From Proposition \ref{three:distinct}, there exist $p\geq 0$, $q\geq1$ and $r\geq 2$ such that $p+q\geq 2$ and $G=K_p\vee(\vee_{i=1}^q N_r)$. Since $G=N_r\vee(K_p\vee(\vee_{i=1}^{q-1} N_r))$, we obtain $i(G)=1$ by Theorem \ref{equiv:i(G)=1}. Conversely, $i(G)=1$ implies that $G$ is a join of some graphs. So, the largest eigenvalue is $n$.
\end{proof}

\begin{corollary}
	Let $G$ be a non--complete, connected graph of order $n$ with three distinct Laplacian eigenvalues $0$, $\alpha(G)$ and $n$ where $k=am(\alpha(G))$. Then, the clique number of $G$ is  $$\omega(G)=n-k.$$
\end{corollary}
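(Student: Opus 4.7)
The plan is to reduce the problem to the structural decomposition provided by Proposition \ref{three:distinct} and then read off the clique number directly from that join structure. By Proposition \ref{three:distinct}, the hypothesis on $G$ guarantees the existence of integers $p\geq 0$, $q\geq 1$, $r\geq 2$ with $p+q\geq 2$ such that
\[
G=K_p\vee\bigl(\vee_{i=1}^{q}N_r\bigr),\quad n=qr+p,\quad k=q(r-1).
\]
So the first step is to observe the numerical identity $n-k=qr+p-q(r-1)=p+q$, which reduces the statement to showing $\omega(G)=p+q$.

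Next I would determine $\omega(G)$ from the join structure. Because $G$ is a join, any two vertices belonging to distinct factors are adjacent in $G$; consequently a clique of $G$ is obtained by taking a clique inside each factor and combining them. The factor $K_p$ contributes at most $p$ vertices (all of them form a clique), while each factor $N_r$ contributes at most one vertex to any clique, since no two vertices of $N_r$ are adjacent. This yields the upper bound $\omega(G)\leq p+q$. For the matching lower bound, take all $p$ vertices of $K_p$ together with a single vertex from each of the $q$ copies of $N_r$; any two of these chosen vertices lie in distinct factors of the join (or both lie inside $K_p$), so they are adjacent in $G$. Hence this set is a clique of size $p+q$, giving $\omega(G)\geq p+q$.

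Combining the two bounds with the identity computed above yields $\omega(G)=p+q=n-k$, as required. The only potential subtlety is verifying that the decomposition from Proposition \ref{three:distinct} interacts cleanly with cliques, but since the join operation always renders inter-factor vertex pairs adjacent, this is essentially automatic; no delicate case analysis is needed.
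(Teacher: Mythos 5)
Your proof is correct and follows the same route as the paper: both invoke Proposition \ref{three:distinct} to write $G=K_p\vee(\vee_{i=1}^q N_r)$, observe $\omega(G)=p+q$, and compute $n-k=p+q$. You simply supply the (straightforward) justification of $\omega(G)=p+q$ that the paper leaves implicit.
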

\begin{proof}
	It follows from Proposition \ref{three:distinct} that there exist $p\geq 0$, $q\geq1$ and $r\geq 2$ such that $p+q\geq 2$ and $G=K_p\vee(\vee_{i=1}^q N_r)$. So, $\omega(G)=p+q$. Since $n=qr+p$ and $k=qr-q$, we have $\omega(G)=n-k$.
\end{proof}


\section{Characterization of regular graphs with $i(G)=2$}


In this section, we shall consider $i(G)=2$. It turns out that $i(K_n)=1$. So, if $i(G)=2$, then $G$ is non--complete and connected.

\begin{proposition}\label{i(G)=2}
	Let $G$ be a connected graph of order $n$ with $i(G)=2$, and $\mathbf{x}$ be a Fiedler vector with $i(\mathbf{x})=2$. Then, two vertices valuated by negative numbers of $\mathbf{x}$ are adjacent and $0<\delta(G)-\alpha(G)\leq 1$. Moreover, one of the two vertices has degree $\delta(G)$.
\end{proposition}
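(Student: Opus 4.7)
My plan is to exploit the two rows of the eigen--equation $L(G)\bx = \alpha(G)\bx$ indexed by the two vertices with negative entries. Let $v_1, v_2$ denote these vertices, so $x_{v_1}, x_{v_2} < 0$ and every other entry of $\bx$ is non--negative. The row for $v_i$ reads
\begin{equation*}
(\mathrm{deg}(v_i) - \alpha(G))\, x_{v_i} = \sum_{u \sim v_i} x_u.
\end{equation*}
Because $i(K_n)=1$ forces $G$ to be non--complete, Theorem \ref{equiv:i(G)=1} applies; together with the hypothesis $i(G)=2$ it yields $\alpha(G) \neq \delta(G)$, and combined with the standard bound $\alpha(G) \leq \delta(G)$ this already gives $0 < \delta(G) - \alpha(G)$.

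Next I show $v_1 \sim v_2$ by contradiction. If $v_1 \not\sim v_2$, every term in $\sum_{u \sim v_1} x_u$ is non--negative, so the $v_1$-row forces $(\mathrm{deg}(v_1)-\alpha(G))\, x_{v_1} \geq 0$. Since $x_{v_1} < 0$, this gives $\mathrm{deg}(v_1) \leq \alpha(G)$; combined with $\alpha(G)\leq\delta(G)\leq\mathrm{deg}(v_1)$ we conclude $\alpha(G)=\delta(G)$. But Theorem \ref{equiv:i(G)=1} then yields $i(G)=1$, contradicting $i(G)=2$. Hence $v_1 \sim v_2$.

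For the remaining assertions, I set $a = -x_{v_1} > 0$ and $b = -x_{v_2} > 0$ and isolate the contribution of $x_{v_2}$ in the $v_1$-row; since the other neighbors of $v_1$ contribute a non--negative amount,
\begin{equation*}
(\mathrm{deg}(v_1)-\alpha(G))\, a \leq b, \qquad \text{and symmetrically,} \qquad (\mathrm{deg}(v_2)-\alpha(G))\, b \leq a.
\end{equation*}
Multiplying these two inequalities (noting both sides are positive because $\mathrm{deg}(v_i) \geq \delta(G) > \alpha(G)$) and cancelling $ab$ gives the key estimate
\begin{equation*}
(\mathrm{deg}(v_1)-\alpha(G))(\mathrm{deg}(v_2)-\alpha(G)) \leq 1.
\end{equation*}
Since each factor is at least $\delta(G)-\alpha(G) > 0$, it follows that $\delta(G)-\alpha(G) \leq 1$. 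Moreover, if both degrees exceeded $\delta(G)$, each factor would be at least $\delta(G)+1-\alpha(G) > 1$, violating the product bound; hence at least one of $v_1, v_2$ attains the minimum degree.

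The proof is essentially mechanical once the right idea is found. In my view the only non--routine step is multiplying the two row inequalities to eliminate the unknown ratio $b/a$; the conclusions about $\delta(G)-\alpha(G)$ and the identification of a minimum-degree vertex then follow at once from the resulting product bound.
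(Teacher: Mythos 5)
Your proof is correct and follows essentially the same strategy as the paper: both arguments work with the two rows of $(L(G)-\alpha(G)I)\bx=\mathbf{0}$ indexed by the negative entries, derive adjacency by the same sign contradiction, and then combine the two rows to bound $\delta(G)-\alpha(G)$ and locate a minimum-degree vertex. The only (cosmetic) difference is that you multiply the two derived inequalities to get the product bound $(\mathrm{deg}(v_1)-\alpha(G))(\mathrm{deg}(v_2)-\alpha(G))\leq 1$, whereas the paper adds the two equations and argues from the sign of the resulting sum.
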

\begin{proof}
	Since $i(G)=2$, there exists $\mathbf{x}=(x_1\dots,x_n)^T\in \mathbb{R}^n$ such that $x_1,x_2<0$, $x_j\geq 0$ for $j=3,\dots, n$ and $(L(G)-\alpha(G)I)\mathbf{x}=0$. We have
	\begin{align}
	(\ell_{11}-\alpha(G))x_1+\ell_{12}x_2+\ell_{13}x_3+\cdots+\ell_{1n}x_n=0,\label{temeq1}\\
	\ell_{21}x_1+(\ell_{22}-\alpha(G))x_2+\ell_{23}x_3+\cdots+\ell_{2n}x_n=0\label{temeq2}.
	\end{align}
	Since $i(G)>1$, it follows that
	\begin{align}\label{temeq4}
	\ell_{ii}-\alpha(G)\geq\delta(G)-\alpha(G)>0
	\end{align}
	for $i=1,\dots,n$. Assume that $\ell_{12}=\ell_{21}=0$. Thus, $(\ell_{11}-\alpha(G))x_1<0$ and $\sum_{j=3}^n\ell_{1j}x_j\leq 0$, which leads to having the left--hand side of \eqref{temeq1} negative. Therefore, $\ell_{12}=\ell_{21}=-1$.
	
	Adding \eqref{temeq1} and \eqref{temeq2}, we have
	\begin{align}
	(\ell_{11}-\alpha(G)-1)x_1+(\ell_{22}-\alpha(G)-1)x_2+\sum_{j=3}^n(\ell_{1j}+\ell_{2j})x_j=0.\label{temeq3}
	\end{align}
	Without loss of generality, suppose that $\ell_{11}\leq\ell_{22}$. If $\ell_{11}-\alpha(G)>1$, then the left--hand side of the equation \eqref{temeq3} is negative. Therefore, $\ell_{11}-\alpha(G)\leq 1$ and by \eqref{temeq4}, $0<\delta(G)-\alpha(G)\leq 1$. Furthermore, suppose that $\ell_{11}>\delta(G)$, that is, $\ell_{11}\geq \delta(G)+1$. Using  $\ell_{11}-\alpha(G)\leq 1$, we  deduce $\alpha(G)=\delta(G)$, which is a contradiction to $i(G)=2$. Thus, $\ell_{11}=\delta(G)$.
\end{proof}

\begin{remark}\label{remark:i(G)=2}{\rm{
	Proposition \ref{i(G)=2} provides two cases: $0<\delta(G)-\alpha(G)<1$ and $\delta(G)-\alpha(G)=1$. Note that $\delta(G)\geq v(G)\geq\alpha(G)$. Consider the case $0<\delta(G)-\alpha(G)<1$. Since $\alpha(G)$ is not an integer, we have $\delta(G)=v(G)>\alpha(G)$. 
	
	Suppose that $\delta(G)-\alpha(G)=1$. Then, continuing the notation and hypothesis in the proof of Proposition \ref{i(G)=2}, it follows from  \eqref{temeq3} that $\ell_{22}\leq \alpha(G)+1=\delta(G)$; by $\ell_{22}\geq\delta(G)$, we have $\ell_{22}=\delta(G)$. Hence, the two vertices valuated by negative signs of a Fiedler vector $\mathbf{x}$ in Proposition \ref{i(G)=2} have degree $\delta(G)$. Furthermore, we have either $\delta(G)-v(G)=0$ or $\delta(G)-v(G)=1$. For the latter case, since $\delta(G)-\alpha(G)=1$, we have $v(G)=\alpha(G)$. It follows from \cite{Steve:connectivity} that $G$ can be written as a join of two graphs $G_1$ and $G_2$ such that $G_1$ is a disconnected graph of order $n-v(G)$ and $G_2$ is a graph on $v(G)$ vertices with $\alpha(G_2)\geq 2v(G)-n$.}}
\end{remark}

Recall that $\lambda_k(G)$ and $\mu_k(G)$ are $k^{\text{th}}$--Laplacian and $k^{\text{th}}$--adjacency eigenvalues in the sequences of eigenvalues $S(L(G))$ and $S(A(G))$ in non--increasing order, respectively. We shall consider a connected $r$--regular graph $G$ of order $n$ with $i(G)=2$. Note that $L(G)=rI-A(G)$. So, $\alpha(G)=r-\mu_2(G)$ where $\mu_2<r$, and any Fiedler vector of $G$ is an eigenvector of $A(G)$ associated to $\mu_2$. Therefore, we also use eigenvectors associated to the second largest eigenvalue of $A(G)$ as Fiedler vectors without distinction.

A matching in a graph $G$ is a set of edges in $G$ such that no two edges in the set share a common vertex.

\begin{proposition}\label{indep2}
	Let $G$ be a connected $r$--regular graph $G$ of order $n$ with $i(G)=2$. Then,
	$$0<\mu_2(G)\leq 1.$$ In particular, if $\mu_2(G)=1$, then there is a matching of size at least $2$ in $G$.
\end{proposition}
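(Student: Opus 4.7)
The plan is to derive both assertions from Proposition~\ref{i(G)=2} and Remark~\ref{remark:i(G)=2} together with the elementary fact that for an $r$-regular graph $L(G)=rI-A(G)$. Since $\delta(G)=r$ and $\alpha(G)=r-\mu_2(G)$, we have $\delta(G)-\alpha(G)=\mu_2(G)$. Proposition~\ref{i(G)=2} gives $0<\delta(G)-\alpha(G)\leq 1$, which is exactly $0<\mu_2(G)\leq 1$, and the first inequality is immediate.

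For the matching claim, assume $\mu_2(G)=1$, so $\delta(G)-\alpha(G)=1$. Proposition~\ref{i(G)=2} furnishes a Fiedler vector $\bx$ with $i(\bx)=2$ whose two negatively valuated vertices $v_1,v_2$ are adjacent; the edge $v_1v_2$ is the first edge of the intended matching, and the task reduces to producing a second edge disjoint from $\{v_1,v_2\}$.

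The argument would proceed by contradiction. If no such edge exists then every edge of $G$ is incident to $v_1$ or $v_2$, so $V(G)\setminus\{v_1,v_2\}$ is independent. Any vertex in that set then has all $r$ of its neighbors inside $\{v_1,v_2\}$, forcing $r\leq 2$. If $r=1$, connectivity gives $G=K_2$, which is complete and contradicts $i(G)=2$. If $r=2$, $G$ is a connected $2$-regular graph, hence a cycle $C_n$; non-completeness rules out $n=3$, and for $n\geq 4$ the path obtained by deleting the adjacent pair $v_1,v_2$ from $C_n$ has $n-2\geq 2$ vertices and therefore contains an edge, contradicting the independence of $V(G)\setminus\{v_1,v_2\}$.

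The only conceptual obstacle is translating $i(G)=2$ into useful structural information, but this is already done by Proposition~\ref{i(G)=2}. After that, the matching statement reduces to ruling out the two degenerate regular graphs $K_2$ and $K_3$, and no further spectral input is needed; the proof is essentially an immediate consequence of the preceding proposition together with a brief casework on $r$.
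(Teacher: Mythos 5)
Your proof is correct for the proposition as literally stated, but it takes a genuinely different, purely combinatorial route for the matching claim, and the difference matters downstream. The first inequality is handled exactly as in the paper, via $\delta(G)-\alpha(G)=\mu_2(G)$ and Proposition \ref{i(G)=2}. For the matching, however, the paper does not do casework on $r$: it adds the two eigenvector equations at $v_1$ and $v_2$ to conclude that every neighbour of $v_1$ or $v_2$ is valuated $0$ (and $x_1=x_2$), then restricts $A(G)$ to the positive support $I$ of $\bx$ and applies Perron--Frobenius to the resulting principal submatrix with eigenvalue $1$, forcing the subgraph induced on $I$ to be a disjoint union of edges. This yields strictly more than a matching: the second edge has both endpoints non-adjacent to $v_1$ and to $v_2$, so $\{v_1,v_2\}$ together with that edge induce $2K_2$ in $G$. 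That induced $2K_2$ (equivalently, an induced $C_4$ in the complement) is precisely what Lemma \ref{lemma2:i(G)=2} later extracts from Proposition \ref{indep2}. Your argument --- if every edge meets $\{v_1,v_2\}$ then $V(G)\setminus\{v_1,v_2\}$ is independent, forcing $r\le 2$, and the cases $K_2$ and $C_n$ are easily disposed of --- is sound, shorter, needs no spectral input beyond the adjacency of $v_1$ and $v_2$, and in fact establishes the matching claim for every connected regular graph with $i(G)=2$ irrespective of the value of $\mu_2$. What it buys in elementarity it loses in strength: it produces only a matching, not an induced $2K_2$, so it proves the stated proposition but would not support the later application; to substitute it for the paper's proof one would have to strengthen its conclusion or re-derive the induced version where it is used.
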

\begin{proof}
	Consider $\alpha(G)=r-\mu_2(G)$ and $\delta(G)=r$. It is straightforward from Proposition \ref{i(G)=2} that $0<\mu_2(G)\leq 1$. 
	
	Suppose that $\mu_2(G)=1$. Since $i(G)=2$, there exists $\bx\in \mathbb{R}^n$ such that $(A(G)-\mu_2(G)I)\bx=\mathbf{0}$ and $i(\bx)=2$. We may assume that $\mathbf{x}=(x_1\dots,x_n)^T\in \mathbb{R}^n$ such that $x_1,x_2<0$, $x_j\geq 0$ for $j=3,\dots, n$. Let $A(G)=[a_{ij}]_{n\times n}$. By Proposition \ref{i(G)=2}, we have $a_{12}=a_{21}=1$. From the equations in the first and second rows of $(A(G)-\mu_2(G)I)\bx=\mathbf{0}$,
	\begin{equation*}
		-x_1+x_2+\sum_{j=3}^{n}a_{1j}x_j=0\;\text{and}\;\; x_1-x_{2}+\sum_{j=3}^{n}a_{2j}x_j=0. 
	\end{equation*}
	Adding the two equations, we obtain
	$$\sum_{j=3}^{n}a_{1j}x_j+\sum_{j=3}^{n}a_{2j}x_j=0.$$
	Since $x_j\geq 0$ for $j=3,\dots,n$ and $A(G)\geq 0$, it follows that $\sum_{j=3}^{n}a_{1j}x_j=\sum_{j=3}^{n}a_{2j}x_j=0$ and $x_k=0$ for any vertex $v_k$ adjacent to $v_1$ or $v_2$. Furthermore, $x_1=x_2$. Let $I=\{k\in [n]|x_k>0\}$ where $[n]=\{1,\dots,n\}$, and let $\tilde{A}$ be the corresponding principal submatrix $A[I]$ and $\tilde{\bx}$ be the corresponding subvector $\bx[I]$. Then, $\tilde{A}\tilde{\bx}=\tilde{\bx}$ where $\tilde{\bx}>0$. Suppose that a subgraph $H$ associated with $\tilde{A}$ is connected. By the Perron--Frobenius theorem, the eigenvalue $1$ is the spectral radius of $\tilde{A}$ and is simple. It implies that $H=K_2$. Since any vertex $v_k$ for $k\in I$ is not adjacent to $v_1$ and $v_2$, there are two edges, namely $v_1\sim v_2$ and the edge in $H$, such that they do not share any vertex in common. Next, assume that $H$ is disconnected. Since each component of $H$ is connected, $H$ consists of pairwise non--adjacent edges. Therefore, $G$ contains at least $2$ pairwise non--adjacent edges. 
\end{proof}

It can be found in \cite{Cvetkovic:second} that $\mu_2(K_{n_1,n_2,\dots,n_k})=0$, where $\max(n_1,n_2,\dots,n_k)\geq 2$, $\mu_2(K_n)=-1$, and $\mu_2(G)>0$ for all other connected graphs $G$. It is clear that $i(K_n)=i(K_{n_1,n_2,\dots,n_k})=1$. Motivated by Proposition \ref{indep2}, we will consider all regular graphs $G$ with $0<\mu_2(G)\leq 1$ and $i(G)=2$. Since $A(G)+A(\bar{G})=J-I$, it follows that $0<\mu_2(G)\leq 1$ is equivalent to $-2\leq\mu_n(\bar{G})<-1$. Moreover, any eigenvector of $A(\bar{G})$ associated to $\mu_n(\bar{G})$ is an eigenvector of $A(G)$ associated to $\mu_2(G)$, vice versa. It follows that the eigenspace associated to $\alpha(G)$ coincides with the eigenspace associated to $\mu_n(\bar{G})$, which is the least adjacency eigenvalue of $\bar{G}$. Furthermore, the eigenspace corresponding to $\mu_n(\bar{G})$ is the same as the eigenspace corresponding to $\lambda_1(\bar{G})$. Recall that $i_{\lambda}^*(G):=\mathrm{min}\{i_{\lambda}(\mathbf{x})|A(G)\mathbf{x}={\lambda}\mathbf{x}\}$. Therefore, for a regular graph $G$, $$i(G)=i_{\mu_2}^*(G)=i_{\mu_n}^*(\bar{G})=i_{\lambda_1}(\bar{G}).$$

Let $G$ be a connected regular graph of order $n$ with $i(G)=2$. Then  $i_{\mu_n}^*(\bar{G})=2$. It can be easily checked that $G$ is connected if and only if $\bar{G}$ is not expressed as a join of graphs. Hence, the difference between the degree in $\bar{G}$ and $\mu_n(\bar{G})$, which is the largest Laplacian eigenvalue of $\bar{G}$, is less than $n$. Suppose that $\bar{G}$ is disconnected and $H_j$ is a component on $m_j$ vertices in $\bar{G}$ for $j=1,\dots,k$ for some $k\geq 2$. Then there exist components $H_{j_1},\dots,H_{j_q}$ for some $1\leq q\leq k$ such that $\mu_n(\bar{G})=\mu_{m_{j_i}}(H_{j_i})$ for $i=1,\dots,q$. It follows that $$i_{\mu_{m_{j_i}}}^*(H_{j_i})\geq i_{\mu_n}^*(\bar{G})$$ for $i=1,\dots,q$. Since the eigenspace of $\bar{G}$ corresponding to $\mu_n$ is the direct sum of the eigenspaces associated to $\mu_{m_{j_i}}$ of $H_{j_i}$ for $i=1,\dots,q$, the condition $i_{\mu_n}^*(\bar{G})=2$ implies that there exists an $i\in\{1,\dots,q\}$ such that $i_{\mu_{m_{j_i}}}^*(H_{j_i})=2$. Thus, we have the following result.

\begin{lemma}\label{lemma0:i(G)=2}
	Let $G$ be a connected regular graph of order $n$. Suppose that $H_j$ is a component on $m_j$ vertices in $\bar{G}$ for $j=1,\dots,k$ for some $k\geq 1$. We have $i(G)=2$ if and only if there exists a component $H_j$ for $j\in\{1,\dots,k\}$ such that $\mu_{m_i}(H_i)\geq\mu_{m_j}(H_j)$ for all $1\leq i\leq k$ and $i_{\mu_{m_j}}^*(H_j)=2$.
\end{lemma}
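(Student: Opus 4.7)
The plan is to leverage the identity $i(G) = i_{\mu_n}^*(\bar{G})$ for regular $G$ (established immediately before the lemma) and analyze the $\mu_n(\bar{G})$-eigenspace of $A(\bar{G})$ via the component decomposition of $\bar{G}$. The key structural observation is that if $H_{j_1},\ldots,H_{j_q}$ are exactly the components of $\bar{G}$ satisfying $\mu_{m_{j_i}}(H_{j_i}) = \mu_n(\bar{G})$, then the $\mu_n(\bar{G})$-eigenspace of $A(\bar{G})$ is the direct sum of the $\mu_{m_{j_i}}(H_{j_i})$-eigenspaces of the $A(H_{j_i})$, each extended by zero on the remaining components. Since zero extension preserves the number of positive and negative entries, any eigenvector of $A(H_{j_i})$ lifts to one of $A(\bar{G})$ with the same $i$-value, which immediately gives $i_{\mu_n}^*(\bar{G}) \leq i_{\mu_{m_{j_i}}}^*(H_{j_i})$ for each $i$.

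For the forward direction, I would suppose $i(G) = 2$ and invoke the preceding discussion to obtain $i_{\mu_{m_{j_i}}}^*(H_{j_i}) \geq 2$ for every $i$. Then pick an eigenvector $\mathbf{x}$ of $A(\bar{G})$ realizing $i(\mathbf{x}) = 2$, and decompose $\mathbf{x} = \sum_i \mathbf{x}_{j_i}$ according to the components $H_{j_i}$. Each nonzero block $\mathbf{x}_{j_i}$ must contribute at least two positive and at least two negative entries to $\mathbf{x}$ (since $i(\mathbf{x}_{j_i}) \geq i_{\mu_{m_{j_i}}}^*(H_{j_i}) \geq 2$); if two such blocks were nonzero simultaneously, $\mathbf{x}$ would have at least four entries of each sign, contradicting $i(\mathbf{x}) = 2$. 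Hence $\mathbf{x}$ is supported on a single block $H_{j_{i_0}}$, which forces $i_{\mu_{m_{j_{i_0}}}}^*(H_{j_{i_0}}) \leq 2$ and therefore equals $2$; that component is the required $H_j$.

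For the backward direction, I would pick an eigenvector $\mathbf{y}$ of $A(H_j)$ for $\mu_{m_j}(H_j)$ with $i(\mathbf{y}) = 2$ and extend by zero on the other components: the extension is an eigenvector of $A(\bar{G})$ at $\mu_n(\bar{G})$ with $i$-value $2$, showing $i(G) \leq 2$. Since $G$ is connected, $i(G) \geq 1$; ruling out $i(G) = 1$ uses the same decomposition/localization argument in reverse, which would force a min-eigenvalue component $H_{j'}$ with $i_{\mu_{m_{j'}}}^*(H_{j'}) = 1$, contrary to the hypothesis that $H_j$ represents the minimum value of $i^*$ among min-eigenvalue components (where that minimum equals $2$).

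The main obstacle is the single-block localization step used in both directions: proving that an eigenvector attaining the minimum $i$-value for $A(\bar{G})$ must concentrate on just one min-eigenvalue component of $\bar{G}$. This rests on the sign-counting inequality for a direct sum of eigenvectors together with the zero-extension bound $i_{\mu_n}^*(\bar{G}) \leq i_{\mu_{m_{j_i}}}^*(H_{j_i})$. Once this structural fact is in hand, the lemma reduces to the bookkeeping identity $i(G) = \min_i i_{\mu_{m_{j_i}}}^*(H_{j_i})$ and the choice of $H_j$ as a minimum-$i^*$ representative among the components achieving $\mu_n(\bar{G})$.
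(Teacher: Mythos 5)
Your forward direction is correct and is essentially the paper's own argument: the $\mu_n(\bar{G})$--eigenspace of $A(\bar{G})$ is the direct sum of the least--eigenvalue eigenspaces of the components attaining $\mu_n(\bar{G})$, zero extension gives $i_{\mu_n}^*(\bar{G})\leq i_{\mu_{m_{j_i}}}^*(H_{j_i})$ for each such component, and the sign count of a minimizing eigenvector (at least two entries of each sign per nonzero block) forces that eigenvector to be supported on a single component.

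The backward direction, however, contains a genuine gap. The hypothesis of the lemma is existential: \emph{some} component attaining $\mu_n(\bar{G})$ has $i^*$ equal to $2$. It does not assert that $2$ is the minimum of $i_{\mu_{m_{j_i}}}^*(H_{j_i})$ over all components attaining $\mu_n(\bar{G})$, yet your final step invokes exactly that (``contrary to the hypothesis that $H_j$ represents the minimum value of $i^*$ among min--eigenvalue components''). A priori a second component $H_{j'}$ with $\mu_{m_{j'}}(H_{j'})=\mu_n(\bar{G})$ could have $i_{\mu_{m_{j'}}}^*(H_{j'})=1$; your own bookkeeping identity $i(G)=\min_i i_{\mu_{m_{j_i}}}^*(H_{j_i})$ would then give $i(G)=1$ even though the stated hypothesis is satisfied, so the implication does not follow formally from what you have written. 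To close the gap you must rule out such an $H_{j'}$. This can be done: every component of $\bar{G}$ is regular of the same degree, and a connected regular graph with an edge whose least adjacency eigenvalue admits an eigenvector with exactly one negative entry must be complete --- if it is not a join, its complement is a connected, non--complete regular graph whose algebraic connectivity would equal its minimum degree, forcing a join structure on the complement by Theorem~\ref{equiv:i(G)=1}, a contradiction; and a regular join all of whose join factors have order at least $2$ has $i^*\geq 2$ at its least eigenvalue, while a singleton factor forces completeness. Hence $i_{\mu_{m_{j'}}}^*(H_{j'})=1$ would give $\mu_n(\bar{G})=-1$, and Proposition~\ref{mu>-2} would then force $H_j$ itself to be complete (or $C_3$), contradicting $i_{\mu_{m_j}}^*(H_j)=2$. (Equivalently: $i(G)\leq 1$ for connected regular $G$ forces $\mu_2(G)\leq 0$, so $G$ is complete multipartite and every component of $\bar{G}$ is a clique, which is incompatible with the existence of $H_j$.) The paper leaves this direction implicit as well, but your argument as written appeals to a hypothesis that the lemma does not contain.
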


Lemma \ref{lemma0:i(G)=2} tells us that to understand a regular graph $G$ with $i(G)=2$, we should investigate the components of the complement of $G$. Specifically, we may narrow our focus to eigenvectors of the least adjacency eigenvalue $-2\leq\mu_n<-1$ of a connected $r$--regular graph $H$ of order $n$ where $r-\mu_n<n$, that is, $H$ can not be written as a join of graphs.

It appears in \cite{book:line} that an $r$--regular graph $H$ of order $n$ with $\mu_n(H)\geq -2$ is either a line graph, a cocktail party graph or a regular exceptional graph. It is known that every cocktail party graph is written as a join of graphs. So, all cocktail party graphs are excluded.

\begin{proposition}\cite{book:line}\label{mu>-2}
	A connected regular graph with least adjacency eigenvalue greater than $-2$ is either a complete graph or an odd cycle.
\end{proposition}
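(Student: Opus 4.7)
The plan is to split into cases by the regularity $r$. For $r\in\{0,1,2\}$, the classification of connected $r$--regular graphs is transparent and one verifies the eigenvalue condition directly: $r=0$ gives $G=K_1$, $r=1$ gives $G=K_2$, and $r=2$ gives $G=C_n$. Since the adjacency eigenvalues of $C_n$ are $2\cos(2\pi k/n)$ for $k=0,\dots,n-1$, the least eigenvalue equals $-2$ when $n$ is even and $-2\cos(\pi/n)>-2$ when $n$ is odd. So in the range $r\leq 2$, the condition $\mu_n(G)>-2$ precisely selects $K_1$, $K_2$, and the odd cycles, all of which are complete graphs or odd cycles as required.

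For $r\geq 3$, the plan is to argue by contradiction: suppose $G$ is connected, $r$--regular with $r\geq 3$, non--complete, and $\mu_n(G)>-2$. The main tool is Cauchy interlacing, which says that for any induced subgraph $H$ of $G$, $\mu_{|V(H)|}(H)\geq \mu_n(G)>-2$. Hence no induced subgraph of $G$ may have smallest adjacency eigenvalue $\leq -2$. In particular, $G$ contains no induced $C_4$ (whose least eigenvalue is $-2$), no induced $K_{1,4}$ (whose least eigenvalue is $-2$), and no induced $K_{2,3}$ (whose least eigenvalue is $-\sqrt{6}$). From the $K_{1,4}$--freeness, every vertex neighborhood $N(v)$ has independence number at most $3$; from the $C_4$--freeness, the common neighborhood $N(u)\cap N(v)$ of any pair of non--adjacent vertices $u,v$ is a clique (two non--adjacent common neighbors would produce an induced $4$--cycle on $u,v$ and the two neighbors).

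The final step is to combine these forbidden--configuration constraints with the fact that $G$ is $r$--regular with $r\geq 3$ and non--complete, in order to produce some forbidden induced subgraph. One picks non--adjacent vertices $u,v$ at distance $2$, analyses $N(u)$, $N(v)$, and $N(u)\cap N(v)$ using the two structural constraints, counts edges between these sets using regularity, and shows that either the clique structure forces $K_{2,3}$ (or another induced graph with eigenvalue $\leq -2$) to appear, or else the graph closes up into a complete graph---contradicting non--completeness. This is the main obstacle: the bookkeeping to rule out all possible $r$--regular configurations compatible with the two constraints is delicate, and is the classical content of the theory of graphs with smallest adjacency eigenvalue bounded below by $-2$; the most efficient route is to invoke the Cvetkovi\'c--Rowlinson--Simi\'c classification from \cite{book:line} already cited in the paper, of which the $\mu_n>-2$ case is the strict portion. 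Either the structural case analysis or a direct appeal to that classification completes the argument.
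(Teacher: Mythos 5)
The paper offers no proof of this proposition at all: it is stated as a cited result from \cite{book:line}, so your closing fallback (``a direct appeal to that classification'') is in fact exactly what the paper does, and to that extent your write-up is consistent with the source. Your treatment of $r\le 2$ is correct and complete.

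The problem is the $r\ge 3$ case, where the elementary route you sketch has a genuine gap rather than just deferred bookkeeping. The three forbidden induced subgraphs you extract from interlacing --- $C_4$, $K_{1,4}$, $K_{2,3}$ --- are necessary constraints but provably insufficient to force the contradiction you describe. The Petersen graph is connected, $3$--regular, non--complete, has girth $5$ (so no induced $C_4$ and no two non-adjacent vertices with more than one common neighbour, hence no induced $K_{2,3}$), and being $3$--regular it cannot contain $K_{1,4}$ at all; yet it is neither complete nor an odd cycle. It is not a counterexample to the proposition (its least eigenvalue is exactly $-2$), but it shows that no amount of ``delicate bookkeeping'' with only these local constraints plus regularity can make the graph ``close up into a complete graph.'' To exclude such graphs you would have to use the strict inequality $\mu_n>-2$ globally, not merely through a finite list of small forbidden induced subgraphs. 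The standard self-contained argument instead observes that $A+2I$ is positive \emph{definite}, hence $G$ is represented by $n$ linearly independent vectors of norm $\sqrt{2}$ in an irreducible root lattice $A_n$, $D_n$ or $E_8$, and then classifies the regular graphs arising this way; that is the content of the cited chapter of \cite{book:line} and is substantially more machinery than Cauchy interlacing. So either state the proposition with the citation, as the paper does, or commit to the root-system argument; the intermediate path you propose does not close.
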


Since $i(K_n)=1$, $K_n$ is ruled out. We will consider eigenvectors of the least adjacency eigenvalue of a cycle $C_n$ of length $n$. As stated in \cite{Brower:spectral}, for $\ell=0,\dots, n-1$, $2\mathrm{cos}(\frac{2\pi\ell}{n})$ is an eigenvalue of $A(C_n)$ associated to $\mathbf{x}_\ell=\left[1,\epsilon^\ell,\dots,\epsilon^{(n-1)\ell}\right]^T$ where $\epsilon=e^{\frac{2\pi i}{n}}$. If $n$ is even, then $\mu_n(C_n)$ is simple and $\mathbf{x}_{\frac{n}{2}}=\left[1,-1,1,\dots,1,-1\right]^T$ is a corresponding eigenvector. So, we have $i_{\mu_n}^*(C_n)=\frac{n}{2}$ for even $n$. Suppose that $n$ is odd. Then, the algebraic multiplicity of $\mu_n$ is $2$, and corresponding linearly independent eigenvectors are $\mathbf{x}_{\frac{n-1}{2}}$ and $\mathbf{x}_{\frac{n+1}{2}}$. Let $\mathbf{v}=[v_0,\dots,v_{n-1}]^T$ and $\mathbf{w}=[w_0,\dots,w_{n-1}]^T$ where $v_j=(-1)^j\mathrm{cos}(\frac{\pi}{n}j)$ and $w_j=(-1)^j\mathrm{sin}(\frac{\pi}{n}j)$ for $j=0,\dots, n-1$, respectively. One can verify that $\mathbf{v}=\frac{\mathbf{x}_{\frac{n-1}{2}}+\mathbf{x}_{\frac{n+1}{2}}}{2} \text{ and } \mathbf{w}=\frac{-\mathbf{x}_{\frac{n-1}{2}}+\mathbf{x}_{\frac{n+1}{2}}}{2i}$. Hence, in order to find $i_{\mu_n}^*(C_n)$ for odd $n$, we need to consider all possible linear combinations of $\mathbf{v}$ and $\mathbf{w}$.

\begin{proposition}
	Let $C_n$ be a cycle of length $n$. Then, $i_{\mu_n}^*(C_n)=\lfloor\frac{n}{2}\rfloor$.
\end{proposition}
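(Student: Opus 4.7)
The plan is to use the explicit eigenvector description from the paragraph preceding the statement and split into cases according to the parity of $n$. For even $n$, the discussion above already shows that $\mu_n(C_n)$ is simple with unique eigenvector (up to scalar) $\mathbf{x}_{n/2} = [1,-1,1,\ldots,1,-1]^T$; this vector has $n/2$ positive and $n/2$ negative entries, so $i_{\mu_n}^*(C_n) = n/2 = \lfloor n/2 \rfloor$. The content is therefore in the odd case.

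For odd $n = 2k+1$, every nonzero real linear combination $\alpha \mathbf{v} + \beta \mathbf{w}$ can be written (up to a nonzero scalar) as $x_j = (-1)^j \cos(\pi j/n - \phi)$ for some phase $\phi \in [0, 2\pi)$. Using $(-1)^j = \cos(\pi j)$, one rewrites this as
\[
x_j \;=\; \cos(\omega j - \phi), \qquad \omega := \frac{(n+1)\pi}{n}, \qquad 0 \le j \le n-1.
\]
I would then establish that the values $\omega j \bmod 2\pi$, as $j$ ranges over $\{0,\ldots,n-1\}$, form precisely the set of $n$ equally spaced points $\{2\pi m/n : 0 \le m < n\}$. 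Indeed $n\omega = (n+1)\pi \equiv 0 \pmod{2\pi}$ since $n+1$ is even, and $j \mapsto \omega j \bmod 2\pi$ is injective on $\{0,\ldots,n-1\}$ because $\gcd((n+1)/2,\,n) = 1$ for odd $n$.

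With this identification the sign of $x_j$ is determined by which of two open arcs of length $\pi$ the point $\omega j \bmod 2\pi$ falls in: $x_j > 0$ iff it lies in $A_+ = (\phi-\pi/2,\,\phi+\pi/2)$; $x_j < 0$ iff it lies in the complementary open arc $A_-$; and $x_j = 0$ iff it coincides with one of the boundary values $\phi \pm \pi/2$. The key geometric observation is that any open arc of length $\pi$ on the circle of circumference $2\pi$ contains at most $k+1$ of the $n$ equally spaced points, since $k+2$ consecutive points span $(k+1) \cdot 2\pi/n > \pi$. In the generic case $|Z|=0$, so $|P|+|N| = 2k+1$ with both summands at most $k+1$, forcing $\{|P|,|N|\} = \{k,k+1\}$ and $\min(|P|,|N|) = k$. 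Because $\pi$ is not a multiple of $2\pi/n$ for odd $n$, the two boundaries $\phi \pm \pi/2$ cannot both coincide with points; if exactly one does, translating so that point is at $0$ shows $A_+$ contains $\{2\pi j/n : 1 \le j \le k\}$ and $A_-$ contains $\{2\pi j/n : k+1 \le j \le 2k\}$, giving $|P| = |N| = k$. In every case $i_{\mu_n}(\mathbf{x}) = k$, and since $\mathbf{v}$ attains this value (as already computed in the preceding discussion), $i_{\mu_n}^*(C_n) = k = \lfloor n/2 \rfloor$.

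The main technical step is the geometric counting in the third paragraph; the crux is converting the alternating factor $(-1)^j$ into the phase shift by $\omega = (n+1)\pi/n$, which turns a potentially combinatorial problem about sign patterns on $C_n$ into a clean counting problem for points of a regular $n$-gon in a half-circle, with the only delicate subcase being a single boundary coincidence.
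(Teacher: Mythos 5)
Your proof is correct. It rests on the same starting point as the paper's argument -- the explicit parametrization of the eigenspace for odd $n$, so that every nonzero eigenvector has entries $x_j=(-1)^jR\cos(\pi j/n-\phi)$ -- but the decisive counting step is genuinely different. The paper works locally: it computes the products $u_ju_{j+1}$ of consecutive entries and shows that at most one consecutive pair can share a sign (because the phases $\alpha_0,\dots,\alpha_{n-1}$ all lie in an interval of length $\pi$), so the sign pattern is forced to alternate in all but at most one place, which pins the counts at $\frac{n-1}{2}$ and $\frac{n+1}{2}$; the zero-entry case is then handled separately. You instead absorb the alternating factor into the cosine via $(-1)^j\cos(\pi j/n-\phi)=\cos\bigl(\tfrac{(n+1)\pi}{n}j-\phi\bigr)$, observe that the sample points $\omega j \bmod 2\pi$ are exactly the $n$ equally spaced points $2\pi m/n$ (using $\gcd((n+1)/2,n)=1$), and count how many fall in each open half-circle; the pigeonhole bound ``at most $k+1$ points in an open arc of length $\pi$'' then gives $\min(|P|,|N|)=k$ in all cases, with the zero-entry case folded in as a boundary coincidence (correctly noted to occur at most once since $\pi$ is not a multiple of $2\pi/n$ for odd $n$). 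Your route buys a more uniform treatment -- it handles $c_1=0$ and the vanishing-entry case without separate arguments, and in fact proves the stronger statement that \emph{every} nonzero eigenvector attains $i_{\mu_n}(\mathbf{x})=\lfloor n/2\rfloor$, so no explicit witness is needed (your parenthetical appeal to $\mathbf{v}$ attaining the value is therefore superfluous, which is just as well since the preceding discussion does not actually compute it). The paper's local sign-change count is more elementary but requires the extra case analysis. Both are complete proofs.
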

\begin{proof}
	For an even cycle, it is clear that $i_{\mu_n}^*(C_n)=\frac{n}{2}$. Suppose that $n$ is odd. Since every Fiedler vector of $C_n$ is a linear combination of $\mathbf{v}$ and $\mathbf{w}$, 
	$$i_{\mu_n}^*(C_n)=\mathrm{min}\{i_{\mu_n}(c_1\mathbf{v}+c_2\mathbf{w})|c_1,c_2\in\mathbb{R},\;c_1c_2\neq 0\}.$$
	Let $\mathbf{u}=c_1\mathbf{v}+c_2\mathbf{w}$ where $\mathbf{u}=\left[u_0,\dots,u_{n-1}\right]^T$. If $c_1=0$ and $c_2\neq 0$, then $i_{\mu_n}^*(\mathbf{u})=\frac{n-1}{2}$. Assume that $c_1\neq 0$. Note that for $j=0,\dots,n-1$, $u_j=c_1v_j+c_2w_j=(-1)^j\sqrt{c_1^2+c_2^2}\mathrm{cos}\left(\frac{\pi}{n}j-\theta\right)$ where $\mathrm{tan}(\theta)=\frac{c_2}{c_1}$. We have $u_j u_{j+1}=-(c_1^2+c_2^2)\mathrm{cos}\left(\alpha_j\right)\mathrm{cos}\left(\alpha_j+\frac{\pi}{n}\right)$ where $\alpha_j=\frac{\pi}{n}j-\theta$. One can check that $u_ju_{j+1}>0$ if and only if $\alpha_j\in(0,\frac{\pi}{2})$ and $\alpha_j+\frac{\pi}{n}\in(\frac{\pi}{2},\pi)$, or $\alpha_j\in(\pi,\frac{3\pi}{2})$ and $\alpha_j+\frac{\pi}{n}\in(\frac{3\pi}{2},2\pi)$. Suppose that $u_j\neq 0$ for all $j=0,\dots,n-1$. Since $\alpha_0,\dots,\alpha_{n-1}\in [-\theta,-\theta+\pi)$, there exists at most one index $j$ in $\{0,\dots,n-2\}$ such that $u_ju_{j+1}>0$. Hence, since $u_ju_{j+1}>0$ implies that $u_j$ and $u_{j+1}$ have the same sign, a change of signs between $u_j$ and $u_{j+1}$ for $j=0,\dots,n-2$ occurs at least $(n-2)$ times. It follows that there are either $\frac{n-1}{2}$ negative and $\frac{n+1}{2}$ positive signs in $\mathbf{u}$ or $\frac{n-1}{2}$ positive and $\frac{n+1}{2}$ negative signs in $\mathbf{u}$. Therefore, $i_{\mu_n}^*(\mathbf{u})=\frac{n-1}{2}$. Assume that there exists $j_0\in\{0,\dots, n-1\}$ such that $u_j=0$. Since $\alpha_0,\dots,\alpha_{n-1}\in [-\theta,-\theta+\pi)$, the $j_0$ is the only solution to $u_j=0$ for $j=0,\dots,n-1$. Consider $u_{j_0-1}u_{j_0+1}=(c_1^2+c_2^2)\mathrm{cos}\left(\frac{(j_0-1)\pi}{n}-\theta\right)\mathrm{cos}\left(\frac{(j_0+1)\pi}{n}j-\theta\right)$. Since $\frac{(j_0-1)\pi}{n}\in(0,\frac{\pi}{2})$ and $\frac{(j_0+1)\pi}{n}\in(\frac{\pi}{2},\pi)$, or $\frac{(j_0-1)\pi}{n}\in(\pi,\frac{3\pi}{2})$ and $\frac{(j_0+1)\pi}{n}\in(\frac{3\pi}{2},2\pi)$, we obtain $u_{j_0-1}u_{j_0+1}<0$. Furthermore, $u_ju_{j+1}<0$ for $j\in\{0,\dots,n-2\}\backslash\{j_0-1,j_0\}$. Then, there are $\frac{n-1}{2}$ positive and negative signs, respectively, and one $0$ in $\mathbf{u}$. Hence, $i_{\mu_n}^*(\mathbf{u})=\frac{n-1}{2}$. Therefore, we have the desired result.
\end{proof}

\begin{corollary}\label{cor:C5}
	Let $C_n$ be a cycle of length $n$. Then, $i_{\mu_n}^*(C_n)=2$ if and only if $n=4,5$.
\end{corollary}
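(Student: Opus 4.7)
The plan is to invoke the preceding proposition directly: we have just established that $i_{\mu_n}^*(C_n) = \lfloor n/2 \rfloor$ for every cycle $C_n$ (with $n \geq 3$). So the corollary reduces to solving the equation $\lfloor n/2 \rfloor = 2$ over integers $n \geq 3$.

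First, I would note that $\lfloor n/2 \rfloor = 2$ holds precisely when $4 \leq n \leq 5$, i.e., when $n \in \{4,5\}$. Conversely, for $n = 4$ we get $\lfloor 4/2 \rfloor = 2$ and for $n = 5$ we get $\lfloor 5/2 \rfloor = 2$, so both directions are immediate.

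There is essentially no obstacle here; the corollary is just a specialization of the formula $i_{\mu_n}^*(C_n) = \lfloor n/2 \rfloor$ to the value $2$. The only thing worth remarking (if one wishes to be thorough) is that cycles are defined for $n \geq 3$, which rules out $n = 3$ since $\lfloor 3/2 \rfloor = 1 \neq 2$, and excludes $n \geq 6$ since then $\lfloor n/2 \rfloor \geq 3$. Thus the proof is a one-line application of the previous proposition.
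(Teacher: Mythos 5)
Your proof is correct and is exactly the argument the paper intends: the corollary is an immediate specialization of the preceding proposition $i_{\mu_n}^*(C_n)=\lfloor n/2\rfloor$, and solving $\lfloor n/2\rfloor=2$ gives $n\in\{4,5\}$. The paper omits a written proof for precisely this reason.
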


\begin{lemma}\label{lemma1:i(G)=2}
	Suppose that a connected regular graph $H$ of order $n$ has $\mu_n(H)>-2$. Then, $i_{\mu_n}^*(H)=2$ if and only if $H=C_5$.
\end{lemma}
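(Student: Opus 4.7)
The plan is to combine Proposition \ref{mu>-2} with Corollary \ref{cor:C5}, after dealing with the complete graph case by hand. By Proposition \ref{mu>-2}, the hypothesis that $H$ is a connected regular graph with $\mu_n(H)>-2$ forces $H$ to be either a complete graph $K_n$ or an odd cycle $C_n$. Thus the lemma reduces to computing $i_{\mu_n}^*$ on exactly these two families.

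First I would dispose of the complete graph case. For $K_n$ (with $n\geq 2$) the spectrum of $A(K_n)$ is $\{n-1,-1,\dots,-1\}$, so $\mu_n(K_n)=-1$ and the associated eigenspace is the orthogonal complement of $\mathbf{1}_n$. The vector $\mathbf{e}_1-\mathbf{e}_2$ lies in this eigenspace and has exactly one positive and one negative entry, so $i_{\mu_n}^*(K_n)=1$. In particular, $K_n$ is ruled out.

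Next, for odd cycles, I would invoke Corollary \ref{cor:C5}, which states that $i_{\mu_n}^*(C_n)=2$ if and only if $n\in\{4,5\}$. Since the cycles produced by Proposition \ref{mu>-2} must be odd, the only value of $n$ compatible with $i_{\mu_n}^*(C_n)=2$ is $n=5$. Conversely, $C_5$ is a connected regular graph with $\mu_5(C_5)=2\cos(4\pi/5)>-2$, so it indeed satisfies the hypothesis, and $i_{\mu_n}^*(C_5)=2$ by Corollary \ref{cor:C5}. This gives both implications.

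There is no serious obstacle here; the substantive work has already been done in the cited results. The only routine verification is confirming that $i_{\mu_n}^*(K_n)=1$ (immediate from exhibiting a single eigenvector) and that $C_5$ genuinely satisfies $\mu_n(C_5)>-2$ (immediate from the closed-form cycle spectrum $2\cos(2\pi\ell/n)$ used earlier in the section).
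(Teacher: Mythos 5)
Your proposal is correct and follows the same route as the paper, which proves this lemma by directly combining Proposition \ref{mu>-2} with Corollary \ref{cor:C5} (the paper likewise rules out $K_n$ just before stating the proposition). Your explicit verification that $i_{\mu_n}^*(K_n)=1$ via the eigenvector $\mathbf{e}_1-\mathbf{e}_2$ is a welcome detail the paper leaves implicit, but the argument is otherwise identical.
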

\begin{proof}
	It is immediately proved by Proposition \ref{mu>-2} and Corollary \ref{cor:C5}.
\end{proof}

Let $\be_i$ be a vector whose $i^\text{th}$ component is $1$ and zeros elsewhere. The size is clear from the text.

\begin{definition}\cite{book:line}
	For $n>1$, let $D_n$ be the set of vectors of the form $\pm\be_i\pm\be_j$ ($i<j$).
\end{definition}
\begin{definition}\cite{book:line}
	Let $E_8$ be the set of vectors in $\mathbb{R}^8$ consisting of the $112$ vectors in $D_8$ together with the $128$ vectors of the form $\pm\frac{1}{2}\be_1\pm\frac{1}{2}\be_2\pm\cdots\pm\frac{1}{2}\be_8$, where the number of positive coefficients is even.
\end{definition}

Now, the regular line graphs and regular exceptional graphs with least adjacency eigenvalue $-2$ are left to consider. These graphs are studied in \cite{book:line} using $D_n$ and $E_8$, the so--called, \textit{root systems}. Let $H$ be a graph on $n$ vertices with  least adjacency eigenvalue $-2$. The symmetric matrix $2I+A(H)$ is positive semi--definite of rank $s$, say. Since $2I+A(H)$ is orthogonally diagonalizable, it follows that $C^TC=2I+A(H)$ where $C$ is an $s\times n$ matrix of rank $s$. According to \cite{book:line}, the column vectors of $C$ are determined by $D_n$ or $E_8$. 

\begin{lemma}\label{root}
	Let $H$ be a connected regular graph with the least adjacency eigenvalue $-2$. If $H$ contains an induced $4$--cycle, there exists an eigenvector $\bx^T=\left[1,-1,1,-1,0,\dots,0\right]$ of $A(H)$ associated with $-2$. 
\end{lemma}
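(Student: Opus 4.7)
\textbf{Proof proposal for Lemma \ref{root}.}

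My plan is to exploit the root system representation introduced just before the statement. Since $H$ has least adjacency eigenvalue $-2$, the matrix $2I+A(H)$ is positive semi-definite, so it admits a Gram factorization $C^TC = 2I+A(H)$ where $C$ is an $s\times n$ matrix whose columns $c_1,\dots,c_n$ are vectors (drawn from $D_n$ or $E_8$ after rescaling, but we will not need this explicitly). Reading off entries, every column satisfies $\|c_i\|^2 = 2$, while $c_i^T c_j$ equals $1$ or $0$ according to whether $v_i\sim v_j$ in $H$ or not.

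Let $v_1,v_2,v_3,v_4$ be the four vertices of an induced $4$-cycle in $H$, labelled in cyclic order. By the induced hypothesis we have $c_i^Tc_{i+1}=1$ (indices modulo $4$) and $c_1^T c_3 = c_2^T c_4 = 0$. The first step is to observe that the linear combination $c_1-c_2+c_3-c_4$ has squared norm
\begin{align*}
\|c_1-c_2+c_3-c_4\|^2 &= \sum_{i=1}^4 \|c_i\|^2 - 2(c_1^Tc_2+c_2^Tc_3+c_3^Tc_4+c_1^Tc_4) \\
&\quad + 2(c_1^Tc_3+c_2^Tc_4) \\
&= 4\cdot 2 - 2\cdot 4 + 2\cdot 0 = 0,
\end{align*}
so that the four columns satisfy the linear relation $c_1-c_2+c_3-c_4 = 0$.

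Now set $\bx = \be_1-\be_2+\be_3-\be_4 \in \mathbb{R}^n$, where the basis vectors are indexed by the vertex ordering $v_1,\dots,v_n$. Then $C\bx = c_1-c_2+c_3-c_4 = 0$, and therefore
\[
(2I+A(H))\bx = C^TC\bx = C^T(C\bx) = 0,
\]
which is precisely $A(H)\bx = -2\bx$. This exhibits $\bx$ as the desired eigenvector associated with $-2$.

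I do not foresee any genuine obstacle: the calculation is short and self-contained once the Gram factorization is in place. The only subtlety is being careful that the labelling of the four vertices respects the cyclic order of the induced $C_4$, so that the adjacency and non-adjacency relations above are the correct ones; any other labelling would yield the wrong inner product pattern. Everything else is routine linear algebra.
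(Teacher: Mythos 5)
Your proof is correct and follows essentially the same route as the paper: both factor $2I+A(H)=C^TC$ and show that $\bx=\be_1-\be_2+\be_3-\be_4$ lies in the kernel of $C$ (the paper by noting $\tilde\bx^T(2I+A(C_4))\tilde\bx=0$, you by expanding the same quadratic form entrywise), hence in the kernel of $C^TC$. The argument is complete.
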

\begin{proof}
	Considering the root systems, there exists a real matrix $C$ such that $C^TC=2I+A(H)$. Since $H$ contains an induced $4$--cycle, without loss of generality, the leading principal $4\times 4$ submatrix of $A(H)$ is an adjacency matrix of $C_4$. Let the first four columns of $C$ comprise the matrix $\tilde{C}$. Then,
	$$\tilde{C}^T\tilde{C}=2I+A(C_4).$$
	Since $\tilde{\bx}^T=\left[1,-1,1,-1\right]$ is an eigenvector of $A(C_4)$ associated to $-2$, we have that $(\tilde{C}\tilde{\bx})^T\tilde{C}\tilde{\bx}=0$. $C$ is real, so $\tilde{C}\tilde{\bx}=0$. Suppose that $\bx^T=\left[1,-1,1,-1,0,\dots,0\right]$. Then,  $C\bx=0$. Therefore, follows that $\bx$ is an eigenvector of $A(H)$ associated to $-2$.
\end{proof}

\begin{lemma}\label{lemma2:i(G)=2}
	Let $H$ be a connected $r$--regular graph of order $n$ with $\mu_n(H)=-2$ where $r+2<n$. Then, $i_{\mu_n}^*(H)=2$ if and only if $H$ contains a $4$--cycle as an induced subgraph.
\end{lemma}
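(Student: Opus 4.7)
The plan is to pass to the complement $G = \bar H$ and harness Proposition \ref{indep2}. Three setup facts do most of the bookkeeping: (a) a vector orthogonal to $\mathbf{1}$ is a $(-2)$-eigenvector of $A(H)$ if and only if it is a $(+1)$-eigenvector of $A(G)$, via $A(G) = J - I - A(H)$; (b) $G$ is $(n-1-r)$-regular with $\mu_2(G) = 1$, hence $\alpha(G) = (n-1-r) - 1 = \delta(G) - 1$; and (c) the hypothesis $r + 2 < n$ makes the largest Laplacian eigenvalue of $H$ strictly less than $n$, so $H$ is not a join, equivalently $G$ is connected. Combined with the identity $i_{\mu_n}^*(H) = i(G)$ for regular $H$ recorded earlier in the section, these reduce the lemma to proving that $i(G) = 2$ if and only if $H$ contains an induced $C_4$.

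For sufficiency, Lemma \ref{root} already supplies an eigenvector $[1,-1,1,-1,0,\dots,0]^T$ of $A(H)$ for $-2$ with exactly two negative components, so $i_{\mu_n}^*(H) \leq 2$. The reverse inequality $i_{\mu_n}^*(H) \geq 2$ is immediate from the setup: $G$ is connected and non--complete (the latter because $H$ has an edge), and if $i(G) = 1$ held, then Lemma \ref{sign:v=1} would force $\alpha(G) = \delta(G)$, contradicting $\alpha(G) = \delta(G) - 1$; while $i(G) \neq 0$ by Proposition \ref{sign:v=0}.

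For necessity, assume $i_{\mu_n}^*(H) = 2$, so $i(G) = 2$ with $\mu_2(G) = 1$. The argument inside the proof of Proposition \ref{indep2} applies verbatim and produces a $(+1)$-eigenvector $\bx$ of $A(G)$ whose two negative entries sit at an edge $v_1 v_2$ of $G$, satisfy $x_1 = x_2$, and force $x_k = 0$ for every $G$-neighbor of $v_1$ or $v_2$, and whose positive support $I = \{k : x_k > 0\}$ induces in $G$ a disjoint union of $K_2$'s by Perron--Frobenius applied to each connected component of $G[I]$. Fixing one such $K_2$ on vertices $v_3, v_4$, the passage back to $H = \bar G$ is the last step and is the clean reward of the bookkeeping: the edges $v_1 v_2$ and $v_3 v_4$ of $G$ become non-edges of $H$, while the four non-edges of $G$ between $\{v_1, v_2\}$ and $\{v_3, v_4\}$ (forced because $x_3, x_4 > 0$) become edges of $H$, so $\{v_1, v_3, v_2, v_4\}$ induces $C_4$ in $H$. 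The main obstacle, the Perron--Frobenius/matching step, is already dispatched inside Proposition \ref{indep2}; the contribution of this lemma is merely to recognize that this matching, together with the forced non-adjacencies to $v_1$ and $v_2$, is exactly the structure of an induced 4-cycle in the complement.
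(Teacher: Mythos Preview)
Your proof is correct and follows essentially the same route as the paper's: pass to the complement $G=\bar H$, use Lemma~\ref{root} for $i_{\mu_n}^*(H)\le 2$, rule out $i(G)\in\{0,1\}$ via connectedness of $G$ and $\alpha(G)=\delta(G)-1$, and for the other direction apply Proposition~\ref{indep2} to locate an induced $2K_2$ in $G$ whose complement is the sought $C_4$. If anything, your version is slightly more careful in the necessity direction: the \emph{statement} of Proposition~\ref{indep2} only yields a matching of size~$2$, not an induced $2K_2$, and you correctly extract from its proof the extra fact that the positive-support vertices are non-adjacent to $v_1,v_2$, which is exactly what forces the $4$-cycle in $H$ to be induced.
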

\begin{proof}
	Suppose that $i_{\mu_n}^*(H)=2$. Since $r+2<n$, the complement $\bar{H}$ of $H$ is connected and regular with $\mu_2(\bar{H})=1$. Moreover, $i_{\mu_2}(\bar{H})=i(\bar{H})=2$. By Proposition \ref{indep2}, $\bar{H}$ contains two non--adjacent edges as an induced subgraph. Therefore, $H$ has an induced subgraph $C_4$.
	
	Conversely, by Lemma \ref{root}, there exists an eigenvector $\bx^T=\left[1,-1,1,-1,0,\dots,0\right]$ of $A(H)$ associated to $-2$. So, $i_{\mu_n}^*(H)\leq2$. Since $\mu_n\neq r$, any eigenvector associated to $\mu_n$ must contain negative and positive components. So, $i_{\mu_n}^*(H)>0$. Suppose that $i_{\mu_n}^*(H)=1$. Since $\bar{H}$ is connected, it follows that $i_{\mu_n}^*(H)=i_{\mu_2}(\bar{H})=i(\bar{H})=1$. So, $\bar{H}$ can be expressed as a join of two graphs by Theorem \ref{equiv:i(G)=1}. This is a contradiction to being a connected graph. Therefore, $i_{\mu_n}^*(H)=2$.
\end{proof}

Here is the our main result in this section regarding the characterization of all connected regular graphs $G$ with $i(G)=2$.

\begin{theorem}\label{Theorem:i(G)=2 for regular}
	Let $G$ be a connected $r$--regular graph of order $n$. Then, $i(G)=2$ if and only if there exists a component $H$ of order $m$ in $\bar{G}$ such that $\mu_n(\bar{G})=\mu_m(H)=\alpha(G)-r-1$ and $H$ satisfies either
	\begin{enumerate}
		\item $r-1<\alpha(G)<r$ and $H=C_5$, or
		\item $\alpha(G)=r-1$, $H$ is not a cocktail party graph and $H$ contains $C_4$ as an induced subgraph.
	\end{enumerate}
\end{theorem}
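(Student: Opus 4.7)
\emph{Proof outline.} My plan is to translate the statement about Fiedler vectors of the regular graph $G$ into a statement about sign patterns of eigenvectors of the smallest adjacency eigenvalue of $\bar{G}$, reduce to a single connected component of $\bar{G}$ via Lemma \ref{lemma0:i(G)=2}, and then identify that component by means of Lemma \ref{lemma1:i(G)=2} and Lemma \ref{lemma2:i(G)=2}. The two identities I carry throughout are $\alpha(G)=r-\mu_{2}(G)$ (from $L(G)=rI-A(G)$) and, using $A(\bar{G})=J-I-A(G)$ together with regularity, $\mu_{n}(\bar{G})=-1-\mu_{2}(G)=\alpha(G)-r-1$, together with the coincidence of the relevant eigenspaces, which gives $i(G)=i_{\mu_n}^{*}(\bar{G})$.

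\textbf{Forward direction.} Assume $i(G)=2$. Proposition \ref{indep2} gives $0<\mu_{2}(G)\leq 1$, equivalently $r-1\leq\alpha(G)<r$ and $-2\leq\mu_{n}(\bar{G})<-1$. By Lemma \ref{lemma0:i(G)=2}, some component $H$ of $\bar{G}$ of order $m$ achieves $\mu_{m}(H)=\mu_{n}(\bar{G})=\alpha(G)-r-1$ and $i_{\mu_m}^{*}(H)=2$; such $H$ is automatically connected and regular. I then split into two subcases on the value $\mu_{m}(H)$. If $r-1<\alpha(G)<r$, then $-2<\mu_{m}(H)<-1$, so Proposition \ref{mu>-2} shows that $H$ is either a complete graph or an odd cycle; $\mu_{m}(K_{m})=-1$ forbids the complete case, and Corollary \ref{cor:C5} forces $H=C_{5}$ among odd cycles, yielding condition (1). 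If $\alpha(G)=r-1$, then $\mu_{m}(H)=-2$, and the hypothesis $r_{H}+2<m$ of Lemma \ref{lemma2:i(G)=2} excludes precisely the complete and the cocktail party regular graphs; complete graphs are already ruled out by $\mu_{m}(K_{m})=-1\neq -2$, so under the assumption that $H$ is not a cocktail party graph, Lemma \ref{lemma2:i(G)=2} equates $i_{\mu_{m}}^{*}(H)=2$ with $H$ containing $C_{4}$ as an induced subgraph, which is condition (2).

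\textbf{Converse direction.} Suppose $H$ is a component of $\bar{G}$ satisfying (1) or (2). Because $\mu_{n}(\bar{G})=\alpha(G)-r-1$ holds automatically, the assumed equality $\mu_{m}(H)=\alpha(G)-r-1$ forces $\mu_{m}(H)=\mu_{n}(\bar{G})$, so that $H$ realises the minimum adjacency eigenvalue across components of $\bar{G}$. In case (1), Corollary \ref{cor:C5} gives $i_{\mu_{m}}^{*}(C_{5})=2$; in case (2), since $H$ is regular, connected, non-complete (as $\mu_{m}(H)=-2$) and not cocktail party, Lemma \ref{lemma2:i(G)=2} applies and gives $i_{\mu_{m}}^{*}(H)=2$. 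Lemma \ref{lemma0:i(G)=2} then yields $i(G)=i_{\mu_{n}}^{*}(\bar{G})=2$, completing the equivalence.

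\textbf{Anticipated obstacle.} Most of the work is concentrated inside the previously proved lemmas, so the theorem itself is largely a case analysis. The substantive step is case (2) of the forward direction, where linking the eigenvector condition $i_{\mu_{m}}^{*}(H)=2$ to the purely combinatorial presence of an induced $C_{4}$ depends essentially on the root-system machinery packaged inside Lemma \ref{lemma2:i(G)=2}. Once the two identities $i(G)=i_{\mu_{n}}^{*}(\bar{G})$ and $\mu_{n}(\bar{G})=\alpha(G)-r-1$ are in hand, the remainder is bookkeeping: checking the sign on $r-1\leq \alpha(G)<r$, handling the boundary $\alpha(G)=r-1$ vs.\ $\alpha(G)>r-1$ via Proposition \ref{mu>-2} and Corollary \ref{cor:C5}, and invoking Lemma \ref{lemma0:i(G)=2} in both directions to pass between $\bar{G}$ and the distinguished component $H$.
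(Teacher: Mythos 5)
Your proposal follows exactly the route the paper intends (its own proof is the single sentence ``Combining Lemmas \ref{lemma0:i(G)=2}, \ref{lemma1:i(G)=2} and \ref{lemma2:i(G)=2}\ldots''), and your converse direction and your case (1) of the forward direction are fine. The problem is case (2) of the forward direction, where you write ``so under the assumption that $H$ is not a cocktail party graph, Lemma \ref{lemma2:i(G)=2} equates\ldots''. That is precisely the property you are required to \emph{prove} about the component $H$ supplied by Lemma \ref{lemma0:i(G)=2}, not something you may assume. Lemma \ref{lemma0:i(G)=2} only guarantees a component $H$ attaining $\mu_m(H)=-2$ with $i^*_{\mu_m}(H)=2$; it does not rule out $H$ being a cocktail party graph, and in fact it cannot: for $K_{k\times 2}$ with $k\geq 2$ the eigenspace of $-2$ consists of the vectors constant on the $k$ non-adjacent pairs with coordinate sum zero, so the vector that is $(1,1,-1,-1,0,\dots,0)$ on two pairs shows $i^*_{-2}(K_{k\times 2})=2$. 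Hence the hypothesis $r_H+2<m$ of Lemma \ref{lemma2:i(G)=2} genuinely fails to cover a case that does occur, and your argument has no way to dispose of it.

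This gap is not repairable as the statement stands, because it reflects an error in the theorem itself (which the paper's one-line proof also glosses over). Take $\bar{G}=K_{3\times 2}+K_5$, so that $G=\overline{K_{3\times 2}+K_5}$ is a connected $6$--regular graph on $11$ vertices. Then $\mu_{11}(\bar{G})=-2$ is attained only on the component $K_{3\times 2}$, $\alpha(G)=5=r-1$, every Fiedler vector of $G$ is supported on that component, and $i(G)=2$ by the computation above; yet the unique component realising $\mu_n(\bar{G})$ is a cocktail party graph, so neither condition (1) nor condition (2) of the theorem holds. (The graph $\overline{C_4+C_4}$ gives an even smaller counterexample.) The correct statement drops the clause ``$H$ is not a cocktail party graph'': a connected regular graph with least adjacency eigenvalue $-2$ that is a join is forced to be a cocktail party graph (its complement is a disconnected regular graph with second adjacency eigenvalue $1$, hence $1$--regular, i.e.\ a perfect matching), such a graph contains an induced $C_4$ and has $i^*_{-2}=2$, so the equivalence survives once this boundary case $r_H+2=m$ is treated separately alongside Lemma \ref{lemma2:i(G)=2}. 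Your write-up should either prove that clause is vacuous for the components that arise (it is not) or supply this additional case analysis.
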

\begin{proof}
	Combining Lemmas \ref{lemma0:i(G)=2}, \ref{lemma1:i(G)=2} and \ref{lemma2:i(G)=2}, we obtain the desired result.
\end{proof}

\begin{example}{\rm{
	Let $H$ be a strongly regular graph with least adjacency eigenvalue $-2$. According to Seidel's classification \cite{Seidel:strong with $-2$}, $H$ is one of
	\begin{enumerate}
		\item the complete $n$--partite graph $K_{2,\dots,2}$ for $n\geq 2$,
		\item the Petersen graph,
		\item the line graph of $K_n$ for $n\geq 5$,
		\item the Cartesian product of two $K_n$s for $n\geq 3$,
		\item the Shrikhande graph,
		\item one of the three Chang graphs,
		\item the Clebsch graph,
		\item the Schl\"{a}fli graph.
	\end{enumerate}
	Note that $K_{2,\dots,2}$ is expressed as a join of graphs. The girth of the Petersen graph is $5$. It can be checked that $H$ has an induced $4$--cycle if and only if the line graph of $H$ contains $C_4$ as an induced graph. This implies that any line graph of a complete graph is $C_4$--free. For the other graphs from (4) to (8), it can be checked that they have $C_4$ as an induced subgraph. Therefore, if a connected regular graph $G$ has one of graphs from (4) to (8) as a component in $\bar{G}$, then $i(G)=2$.}}
\end{example}

		

{\small
}

{\small
{\em Authors' addresses}:
{\em Sooyeong Kim}, University of Manitoba, Winnipeg, Canada
 e-mail: \texttt{kims3428@\allowbreak myumanitoba.ca}, {\em Steve Kirkland}, University of Manitoba, Winnipeg, Canada
 e-mail: \texttt{stephen.kirkland@\allowbreak umanitoba.ca}.

}

\end{document}